\documentclass[12pt]{amsart}
\usepackage{amsfonts,latexsym,amsmath, amssymb}
\usepackage{mathrsfs,MnSymbol}
\usepackage{url,color}
\usepackage{upgreek}
\usepackage{fancyhdr}
\usepackage{hyperref}
\usepackage{times}
\usepackage{scalefnt}
\usepackage{url}
\usepackage{cite}
\usepackage{amsmath}

\newcommand{\bea}{\begin{eqnarray}}
\newcommand{\eea}{\end{eqnarray}}
\def\beaa{\begin{eqnarray*}}
\def\eeaa{\end{eqnarray*}}
\def\ba{\begin{array}}
\def\ea{\end{array}}
\def\be#1{\begin{equation} \label{#1}}
\def \eeq{\end{equation}}

\def\be{{\beta}}

\def\H{{\mathbb{H}}}
\def\R{{\mathbb{R}}}
\def\C{{\mathbb{C}}}
\def\S{{\mathbb{S}}}

\newcommand{\<}{  \langle   }
\renewcommand{\>}{  \rangle   }

\DeclareMathOperator{\ch}{ch}
\DeclareMathOperator{\sh}{sh}

\newtheorem{theorem}{Theorem}[section]
\newtheorem{lemma}[theorem]{Lemma}
\newtheorem{proposition}[theorem]{Proposition}

\setlength{\textwidth}{16cm} \setlength{\oddsidemargin}{0cm}
\setlength{\evensidemargin}{0cm}
\numberwithin{equation}{section}

\numberwithin{equation}{section}

\textwidth 7 in
\oddsidemargin -.25 in
\evensidemargin -.25 in
\textheight 8.7 in
\topmargin - .23 in

\pagestyle{fancy}
\headheight 30pt

\rhead[]{\thepage}
\chead[Schr\"odinger map from $\H^2$ to $\S^2$    ]{\textsf{ Huang, Wang, Zhao   } \\ }
\lhead[\thepage]{}
\rfoot[]{}
\cfoot{}
\lfoot{}

\begin{document}


\title{Equivariant Schr\"{o}dinger maps from two dimensional hyperbolic space }
\author{Jiaxi Huang, Youde Wang, Lifeng Zhao}

\thanks{}

\thanks{ }

\thanks{}	

\thanks{}	

\begin{abstract}
  In this article, we consider the equivariant Schr\"odinger map from $\H^2$ to $\S^2$ which converges to the north pole of $\S^2$ at the origin and spatial infinity of the hyperbolic space. If the energy of the data is less than $4\pi$, we show that the local existence of Schr\"{o}dinger map. Furthermore, if the energy of the data sufficiently small, we prove the solutions are global in time. 
\end{abstract}
\maketitle

\setcounter{tocdepth}{2}
\pagenumbering{roman} \tableofcontents \newpage \pagenumbering{arabic}

\section{Introduction}
In this article, we consider the Schr\"{o}dinger map equation
\begin{equation}           \label{Schrodinger map}
\frac{\partial u}{\partial t}=J\tau(u),
\end{equation}
where $u(x,t): [0,T]\times\H^2 \rightarrow \S^2$, $\tau(u)$ is the tension field of $u$ and $J$ is complex structure on $\S^2$. The equation admits the conserved energy
\begin{equation*}
E(u)=\frac{1}{2} \int_{\H^2} |du|^2 {\rm dvol}_g,
\end{equation*}
where ${\rm dvol}_g$ is the volume form of $(\H^2,\ g)$.

The Schr\"{o}dinger maps from Euclidean spaces have been intensely studied in the last decades. The local well-posedness of Schr\"{o}dinger maps was established by Sulem, Sulem and Bardos \cite{SuSuBa} for $S^2$ target, Ding and Wang \cite{DingWang1,DingWang2} and McGahagan \cite{Mc} for general K\"{a}hler manifolds. Ionescu and Kenig \cite{IoKe} obtained the global well-posedness of maps into $S^2$ with small data in the critical Besov spaces $\dot{B}^{\frac{d}{2}}_Q(R^d,S^2)$, $Q\in S^2$ for $d\geq 3$. The global well-posedness for maps $\R^d\rightarrow S^2$, $d\geq 2$ with small critical Sobolev norms was obtained by Bejenaru, Ionescu, Kenig and Tataru \cite{BeIoKeTasmall}. However, the Schr\"{o}dinger map equation with large data is a much more dufficult problem. When the target is $S^2$, there exists a collection of families $\mathcal{Q}^m$ (see \cite{BeTa}) of finite energy stationary solutions for integer $m\geq 1$; When the target is $\H^2$, there is not nontrival equivariant stationary solution with finite energy.  Hence, Bejenaru, Ionescu, Kenig and Tataru \cite{BeIoKeTaS2,BeIoKeTaH2} proved the global well-posedness and scattering for equivariant Schr\"{o}dinger maps $\R^2\rightarrow \S^2$ with energy blow the ground state and equivariant Schr\"{o}dinger maps $\R^2 \rightarrow \H^2$ with finite energy. When the energy of maps is larger than that of ground state, the dynamic behaviors are complicated. The asymptotic stability and blow-up for Schr\"{o}dinger maps have been considered by many authors for instance \cite{GuKaTs1,GuKaTs2,GuNaTs,BeTa,MeRaRo,Pe}. We refer to \cite{KTV} for more open problems in this field.

The above results are restricted on flat domains, naturally, we can consider geomertic flow on curved manifolds. Because the hyperbolic spaces are symmetric and noncompact, geometric flows from hyperbolic spaces are natural starting points. The heat flow between hyperbolic spaces is an interesting model because it is related to the Schoen-Li-Wang conjecture (see Lemm, Markovic \cite{LeMa}). For such heat flow, Li and Tam \cite{LiTam} obtained the sufficient conditions to ensure that the harmonic map between hyperbolic spaces can be solved by solving the heat flow. In recent years, there are many works concerning wave maps on hyperbolic spaces which are expected to have many similar phenomenon to Schr\"{o}dinger maps.  D'Ancona and Qidi Zhang \cite{DAZhang} showed the global existence of equivariant wave maps from hyperbolic spaces $\H^d$ for $d\geq 3$ to general targets for small initial data in $H^{\frac{d}{2}}\times H^{\frac{d}{2}-1}$. The problem was also intensely studied by Lawrie, Oh, Shahshahani \cite{LaOhSha,LaOhShagap,LaOhShalarge,LaOhShaCauchy} and Li, Ma, Zhao \cite{LiMaZhao}. Since the wave maps $\H^2\rightarrow\H^2$ or $\S^2$ have a family of equivariant harmonic maps, \cite{LaOhSha} and \cite{LaOhShagap} proved the stability of stationary $k$-equivariant wave maps by analyzing spectral properties of the linearized operator. \cite{LaOhShalarge} continued to consider this problem and showed the soliton resolution for equivariant wave maps $\H^2\rightarrow\H^2$ with initial data $(\psi_0,\ \psi_1)\in \mathcal{E}_{\lambda}$ for $0\leq\lambda\leq\Lambda$ by profile decomposition. For initial data without any symmetric assumption,  Li, Ma and Zhao \cite{LiMaZhao} proved that the small energy harmonic maps from $\H^2$ to $\H^2$ are asymptotically stable under the wave map recently. \cite{LaOhShaCauchy} established global well-posedness and scattering for wave maps from $\H^d$ for $d\geq 4$ into Riemannian manifolds of bounded geometry for small initial data in the critical Sobolev space. As a geometric flow, Schr\"{o}dinger map is a special case of Landau-Lifshitz flow. Li and Zhao \cite{LiZhao} proved that the solution of Landau-Lifshitz $u(t,x)$ from $\H^2$ to $\H^2$ converges to some harmonic map as $t\rightarrow\infty$ when the Gilbert coefficient is positive.

The Schr\"{o}dinger maps on $\H^2$ exhibits markedly different phenomena from its Euclidean counterpart. First, the most interesting feature is that there is an abundance of equivariant harmonic maps introduced by \cite{LaOhSha}. Precisely, when the target is $\S^2$, there is a family of equivariant harmonic maps with energy $4\pi \frac{\lambda^2}{1+\lambda^2}$ for $\lambda\in[0,+\infty)$; When the target is $\H^2$, we also have a family of equivariant harmonic maps with energy $4\pi\frac{1+\lambda^2}{1-\lambda^2}$ for $\lambda\in[0,1)$. Naturally, the dynamic behaviors of solutions with energy above the harmonic maps are of great interest. Second, the maps still exhibit features of mass critical equation, though it lacks scaling symmetry. Indeed, in the Coulomb gauge, the Schr\"{o}dinger map can be reduced to two coupled Schr\"{o}dinger equations. If the support of initial data is contained in a open ball $B_{\epsilon}(\textbf{0})$ for $\epsilon>0$ small, then the solutions will not exhibit the global geometry of the domain and thus can be approximated by solutions to the corresponding scaling invariant mass critical Schr\"{o}dinger equations $\R^{2}\rightarrow \S^2$. Third, the notable feature of the problem is the better dispersive estimates of the operator $e^{it\Delta_{\H^2}}$ than the Euclidean counterpart. The stronger dispersion are possible due to the more robust geometry at infinity of noncompact symmetric spaces compared to Euclidean spaces. The above features make (\ref{Schrodinger map}) an interesting model for investigating the well-posedness for large data and the stability of stationary solutions.

In this paper, we establish the local well-posedness for large data and global well-posedness for small initial data.

To explain the main results in more detail, we give a more precise account. As both the domain and the target are rotationally symmetric, the map $u$ is called $m$-equivariant, if $u$ satisfies $u \circ \rho =\rho \circ u$ for all rotations $\rho \in SO(2)$. Since $u$ is a map $\H^2 \rightarrow \S^2$ here, in the polar coordinates, $u$ is $m$-equivariant if and only if $u$ can be written as
\begin{equation*}
u(r,\theta)=e^{m\theta R}\bar{u}(r).
\end{equation*}
Here $R$ is the generator of horizontal rotations, which is defined as
\begin{equation*}
R:=\left( \begin{matrix}
0 & -1 & 0\\
1 & 0 & 0\\
0 & 0 & 0
\end{matrix}
\right),\ \ Ru=\vec{k} \times u.
\end{equation*}
where $\vec{k}=(0,\ 0,\ 1)^T$. We denote $\vec{i}:=(1,\ 0,\ 0)^T$ and $\vec{j}:=(0,\ 1,\ 0)^T$.
The energy of $m$-equivariant maps can be expressed as
\begin{equation*}
E(u)=\pi \int_0^{\infty} \left(|\partial_r \bar{u}|^2 + \frac{m^2}{\sinh^2 r} (\bar{u}_1^2+\bar{u}_2^2) \right) \sinh r dr.
\end{equation*}
If $m\ne 0$, then $E(u)<\infty$ implies that $\lim\limits_{r\rightarrow 0}u_1=\lim\limits_{r\rightarrow 0}u_2=0$. Due to the exponential decay of $\sinh^{-1}r$, we assume that $\lim\limits_{r\rightarrow \infty}u_1,\ \lim\limits_{r\rightarrow \infty}u_2 \geq 0$, which gives $\lim\limits_{r\rightarrow \infty}u_3=\frac{1-\lambda ^2}{1+\lambda^2} \geq 1$ for $\lambda \in [0,1)$ by $u_1^2+u_2^2+u_3^2=1$. The equivariant Schr\"{o}dinger map (\ref{Schrodinger map}) admits solitons, which are equivariant harmonic maps $u$ such that $u\times \Delta u=0$. In contrast to the Sch\"{o}dinger maps from Euclidean spaces, the Schr\"{o}dinger maps on $\H^2$ admit harmonic maps with any energy $E(u)<4\pi$ for $\S^2$ target and $E(u)<\infty$ for $\H^2$ target. In fact, for $u:\H^2\rightarrow\S^2$ with endpoint $u_3(\infty)=\frac{1-\lambda^2}{1+\lambda^2}$ for $\lambda\in[0,\infty)$, there exists equivariant stationary solution to (\ref{Schrodinger map})
\begin{equation*}
Q_{\lambda}=(\frac{2\lambda\tanh \frac{r}{2}}{1+(\lambda \tanh \frac{r}{2})^2},\ 0,\ \frac{1-(\lambda \tanh \frac{r}{2})^2}{1+(\lambda \tanh \frac{r}{2})^2}),
\end{equation*}
with energy $E(Q_{\lambda})=4\pi\frac{\lambda^2}{1+\lambda^2}$. For $u:\H^2\rightarrow\H^2$ with endpoint $u_3(\infty)=\frac{1+\lambda^2}{1-\lambda^2}$ for $\lambda\in[0,1)$, there exists equivariant stationary solution to (\ref{Schrodinger map})
\begin{equation*}
P_{\lambda}=(\frac{2\lambda\tanh \frac{r}{2}}{1-(\lambda \tanh \frac{r}{2})^2},\ 0,\ \frac{1+(\lambda \tanh \frac{r}{2})^2}{1-(\lambda \tanh \frac{r}{2})^2}),
\end{equation*}
with energy $E(P_{\lambda})=4\pi\frac{\lambda^2}{1-\lambda^2}$.

This leads us to consider the equivariant Schr\"{o}dinger maps in the classes
\begin{equation*}
\mathcal{E}_{\lambda}=\left\{ u:\H^2\rightarrow \S^2 \big| E(u) <\infty ,\ \lim\limits_{r\rightarrow 0}u_3=1,\ \lim\limits_{r\rightarrow \infty}u_3=\frac{1-\lambda^2}{1+\lambda^2}  \right\},\ \ \lambda\in [0,1),
\end{equation*}
but the case $\lambda >0$ is difficult, we will not consider here. Let $u:\H^2\rightarrow\S^2\subset\R^3$ be a smooth map. The Sobolev norm $\mathfrak{H}^k(\H^2;\S^2)$ are defined by
\begin{equation*}
\left\lVert u\right\rVert_{\mathfrak{H}^k}^2:=\sum\limits_{i=1}^k\int_{\H^2}|\nabla^{i-1}du|_g^2\ {\rm dvol}_g.
\end{equation*}
The main results are the following.
\begin{theorem}   \label{main result 1}
If $u_0\in \mathfrak{H}^3$, then there exists $T>0$, such that (\ref{Schrodinger map}) has a unique solution in $L^{\infty}_t([0,T];\mathfrak{H}^3)$.
\end{theorem}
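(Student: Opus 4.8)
The plan is to establish local existence by a viscosity (parabolic) regularization, in the spirit of Sulem--Sulem--Bardos \cite{SuSuBa} and Ding--Wang \cite{DingWang1,DingWang2}, combined with the equivariant reduction, and to treat uniqueness separately because the Schr\"odinger map equation loses a derivative. First I would use the $m$-equivariant ansatz $u(r,\theta,t)=e^{m\theta R}\bar u(r,t)$ together with the extrinsic form $\partial_t u = u\times \Delta_{\H^2} u$ (the $|du|^2u$ part of $\tau(u)$ being annihilated by $u\times$), so that the profile $\bar u$ solves a one-dimensional quasilinear Schr\"odinger system on $(0,\infty)$ with weight $\sinh r$, singular potential $m^2/\sinh^2 r$, the constraint $|\bar u|=1$, and the boundary behaviour $\bar u\to \vec{k}$ as $r\to 0$ and as $r\to\infty$. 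The natural function space is $\vec{k}$ plus a neighbourhood in $\mathfrak H^3$; note that $\|u\|_{\mathfrak H^3}$ controls $du,\nabla du,\nabla^2 du$ in $L^2(\H^2)$, so by Sobolev embedding on the two-dimensional manifold $\H^2$ one has $du\in L^\infty$, which is the room needed to run energy estimates.

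Next I would introduce, for $\epsilon>0$, the Landau--Lifshitz--Gilbert-type flow $\partial_t u^\epsilon=\epsilon\,\tau(u^\epsilon)+u^\epsilon\times\tau(u^\epsilon)$ with $u^\epsilon(0)=u_0$. This system is (degenerate) parabolic, preserves $|u^\epsilon|=1$, and is locally well posed in $\mathfrak H^3$ for each fixed $\epsilon$ by standard parabolic theory. The heart of the argument is then $\epsilon$-uniform a priori estimates: differentiating the equation covariantly $i$ times for $i=0,1,2$, pairing with $\nabla^i du$, and using (i) the energy identity, in which the $\epsilon$-term only dissipates; (ii) the boundedness and constancy of the curvature of $\H^2$, so that commutators $[\nabla,\Delta_{\H^2}]$ and Ricci contributions are uniformly bounded zeroth-order terms; and (iii) the $\S^2$-constraint, which makes the top-order contributions of null type, with the worst derivative landing on a factor paired against itself and dropping out, exactly as in the classical computation. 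This produces a differential inequality $\tfrac{d}{dt}\|u^\epsilon(t)\|_{\mathfrak H^3}^2\le C\,P(\|u^\epsilon(t)\|_{\mathfrak H^3}^2)$ with $P$ a fixed polynomial independent of $\epsilon$, hence a time $T=T(\|u_0\|_{\mathfrak H^3})>0$ and a bound on $\sup_{[0,T]}\|u^\epsilon\|_{\mathfrak H^3}$ uniform in $\epsilon$.

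With these bounds $\{\partial_t u^\epsilon\}$ is bounded in $L^\infty_t$ of a negative-order space, so by Aubin--Lions (locally in $\H^2$, then exhausting the manifold) I can extract $u^\epsilon\to u$ strongly in $C_t\mathfrak H^2_{\mathrm{loc}}$ and weak-$*$ in $L^\infty_t([0,T];\mathfrak H^3)$. Strong local convergence passes to the limit in every term, each at most quadratic in the derivatives, so the limit $u$ solves $\partial_t u=u\times\Delta_{\H^2}u$, lies in $L^\infty_t([0,T];\mathfrak H^3)$, keeps $|u|=1$ and the boundary conditions, and satisfies the energy inequality by weak lower semicontinuity.

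For uniqueness, let $u,v\in L^\infty_t\mathfrak H^3$ share initial data and set $w=u-v$, so $\partial_t w=w\times\Delta u+v\times\Delta w$; a bare $L^2$ estimate does not close because of the derivative loss in $v\times\Delta w$. I would instead either (a) perform the difference estimate in the Coulomb gauge, where the system becomes a scalar Schr\"odinger equation for a field $q$ with the connection coefficient recovered from $q$ by integration (hence gaining a derivative), so that two gauged solutions differ by a solution of an inhomogeneous NLS whose source is one derivative smoother and a Gronwall argument in $L^2_x$ closes; or (b) exploit the target constraint and the extra $\mathfrak H^3$-regularity of $u,v$ to show that, after integration by parts, $\int w\cdot(v\times\Delta w)$ reduces --- the pointwise triple products in $|\nabla w|^2$ vanishing --- to terms bounded by $\|w\|_{\mathfrak H^1}^2(\|u\|_{\mathfrak H^3}+\|v\|_{\mathfrak H^3})$, giving $\tfrac{d}{dt}\|w\|_X^2\lesssim\|w\|_X^2$ for a suitable $X\in\{L^2,\mathfrak H^1\}$ and hence $w\equiv 0$. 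I expect this last step to be the main obstacle, both because the derivative loss makes uniqueness genuinely non-soft and because route (a) presupposes the gauge framework that is in any case needed for the global small-data part of the paper.
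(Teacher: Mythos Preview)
Your overall strategy is sound, but it takes a genuinely different route from the paper's. The paper does \emph{not} use a parabolic (Landau--Lifshitz--Gilbert) regularization; it follows McGahagan \cite{Mc} and approximates by the modified \emph{wave map}
\[
\delta^2 \nabla_t \partial_t u^\delta - J\partial_t u^\delta - \tau(u^\delta) = 0,
\]
so the regularizing mechanism is hyperbolic rather than parabolic. The uniform estimates are then obtained for the coupled energies $E_k = \tfrac12\|\nabla^{k-1} du\|_{L^2}^2 + \tfrac{\delta^2}{2}\|\nabla^{k-1}\partial_t u\|_{L^2}^2$, $k=1,2,3$, together with a separate lemma controlling $\|\nabla^{k-1}\partial_t u^\delta\|_{L^2}$ uniformly in $\delta$; the computations are carried out in the Iwasawa coordinates (\ref{global coordinate 2}) with the global orthonormal frame $e_1,e_2$, and a bootstrap closes the bound on a time interval depending only on $\|u_0\|_{\mathfrak H^3}$. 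The wave map approximation has the advantage that uniqueness comes essentially for free: two approximate solutions are still solutions of a semilinear wave system, so their difference can be estimated by energy methods without derivative loss, and one then passes to the limit $\delta\to 0$. Your parabolic approach would also work (it is the Sulem--Sulem--Bardos / Ding--Wang route), and it has the virtue that the regularized flow is genuinely smoothing, which can simplify existence for the approximate problem; but uniqueness then has to be argued separately along the lines you sketch, and your option (b) would need more care since the cancellation in $\int w\cdot (v\times \Delta w)$ is not automatic.

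One smaller point: your first paragraph invokes the equivariant reduction, but Theorem \ref{main result 1} is stated and proved for general $u_0\in\mathfrak H^3$, and the paper's argument in Section 3 makes no use of equivariance. You should drop that reduction here; it is only used later, for Theorem \ref{main result 2}.
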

\begin{theorem}   \label{main result 2}
If $u_0\in \mathfrak{H}^1$ is a 1-equivariant map satisfying $u_0\in \mathcal{E}_0$ and $E(u_0)<4\pi$, then there exists $T>0$, such that (\ref{Schrodinger map}) has a unique solution $u\in L^{\infty}_t([0,T];\mathfrak{H}^1)$ in the class $\mathcal{E}_0$ defined as the unique limit of smooth solution in $\mathfrak{H}^3$. In particular, there exists $\epsilon>0$ such that $E(u_0)<\epsilon$, then for any compact interval $J\subset \R $, there exists a unique solution $u\in L^{\infty}_t(J;\mathfrak{H}^1)$.
\end{theorem}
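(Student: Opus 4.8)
The plan is to reduce the $1$-equivariant Schr\"odinger map to a scalar nonlinear Schr\"odinger equation for a gauged derivative field, run the Cauchy theory for that equation using the dispersive properties of $e^{it\Delta_{\H^2}}$, and then transfer the conclusion back to the map $u$ by approximating through the smooth solutions produced by Theorem~\ref{main result 1}.

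I would begin with the gauge reduction. For $u=e^{\theta R}\bar u(r)\in\mathcal E_0$, choose a Coulomb-type orthonormal frame $\{e,Je\}$ on $u^{*}T\S^{2}$ compatible with the equivariance, and form $\psi=\langle D_r u,e\rangle+i\langle D_r u,Je\rangle$ together with the associated temporal connection coefficient. Differentiating (\ref{Schrodinger map}) in this frame, $\psi$ solves an equation of the schematic form
\begin{equation*}
i\partial_t\psi=-\Delta_{\H^2}\psi+\frac{V(r)}{\sinh^2 r}\,\psi+\mathcal N(\psi),
\end{equation*}
where $V$ is determined by $u_3$ (and is of inverse-square type as $r\to 0$ and short-range, decaying like $e^{-2r}$, as $r\to\infty$) and $\mathcal N$ is cubic in $\psi$ with nonlocal coefficients built from $\psi$ by integration in $r$. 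The hypothesis $E(u_0)<4\pi$ with endpoint $\lambda=0$ enters here in two ways: it is the quantitative ``no-bubbling'' threshold (recall $E\ge 4\pi$ for any equivariant map of nonzero degree onto $\S^2$), so $\bar u$ stays in a fixed hemisphere, $1+u_3$ is bounded below, and the frame remains nonsingular along the flow; and it gives the coercivity $\|\psi\|_{L^2}^2\simeq E(u)<4\pi$, placing the reduced, mass-critical-type problem below the ground-state mass, which is what the nonlocal cubic terms require in order to close.

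Next I would fix resolution spaces built on the Schr\"odinger--Strichartz estimates for $\H^2$, which hold globally in time over a wider range of exponents than in the Euclidean case, together with local-smoothing bounds; the inverse-square part of the potential is handled on the equivariant sector exactly as in the $\R^2$ equivariant theory, while the exponentially decaying tail is short-range and may be absorbed perturbatively. A contraction in a space normed by the relevant $L^4$-type Strichartz norm then yields: for $E(u_0)<4\pi$ a time $T=T(u_0)>0$ and a unique $\psi$ on $[0,T]$; and for $E(u_0)<\epsilon$ the same bootstrap closes on every compact $J\subset\R$, because the smallness defeats the cubic nonlinearity via the improved dispersion, giving $\psi$ in the global Strichartz space on $J$. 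To transfer this to $u$, approximate $u_0$ in $\mathfrak H^1$ by smooth $u_0^{(n)}\in\mathfrak H^3\cap\mathcal E_0$ with $E(u_0^{(n)})<4\pi$; Theorem~\ref{main result 1} gives solutions $u^{(n)}\in L^\infty_t([0,T_n];\mathfrak H^3)$, and the estimates above, applied to the gauge fields $\psi^{(n)}$ and uniform because $E$ is conserved and stays below $4\pi$, force $T_n\ge T>0$ and make $\{\psi^{(n)}\}$, hence $\{u^{(n)}\}$, Cauchy in $L^\infty_t([0,T];\mathfrak H^1)$. The limit $u$ then lies in $\mathcal E_0$ and solves (\ref{Schrodinger map}); uniqueness within the class of limits of smooth solutions follows from the difference estimate for the $\psi$-equation, and for small energy the bounds are valid on each compact $J$.

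I expect the main obstacle to be the large-data local step at the $4\pi$ threshold: since the reduced equation is mass-critical in nature and lacks scaling symmetry, the local theory must be run with frequency-localized estimates that retain the geometry at spatial infinity where the potential lives, and one must check both that $E(u_0)<4\pi$ keeps the Coulomb frame nonsingular for a definite time and that it produces a lower bound on the existence time without invoking concentration-compactness. A secondary technical point is controlling the behavior as $r\to 0$ and $r\to\infty$ within the class $\mathcal E_0$, so that the gauge construction and the identity $\|\psi\|_{L^2}^2\simeq E(u)$ persist under the $\mathfrak H^3\to\mathfrak H^1$ approximation.
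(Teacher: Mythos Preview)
Your high-level strategy—gauge reduction to a nonlinear Schr\"odinger problem, Strichartz-based well-posedness, and transfer back to $u$ via the $\mathfrak H^3$ approximation of Theorem~\ref{main result 1}—matches the paper's architecture. However, several structural points are misidentified or missing, and at least one of them is a genuine gap.

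First, the reduction is not to a single scalar field but to the \emph{coupled pair} $(\psi^+,\psi^-)$ defined by $\psi^\pm=\psi_1\pm i\frac{\psi_2}{\sinh r}$, which satisfy two Schr\"odinger equations with \emph{different} potentials $\pm 2\frac{\cosh r-1}{\sinh^2 r}$. The $\psi^+$ equation has a positive, exponentially decaying potential for which the paper proves a genuinely new dispersive estimate $\|e^{it(\Delta_{\H^2}-V)}\|_{L^1\to L^\infty}\lesssim |t|^{-1}$ for $0<|t|<1$ via a Birman--Schwinger resolvent expansion; this is not ``handled as in the $\R^2$ equivariant theory.'' The $\psi^-$ equation, by contrast, has a \emph{negative} potential with at least one eigenvalue, so Strichartz estimates for its linear part are unavailable; the paper instead moves the potential to the right-hand side and absorbs it over short time intervals using $\|V\|_{L^2_IL^2}\ll 1$. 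Your treatment of the potential as uniformly short-range and perturbative would fail for this second equation on long intervals—this is exactly why the small-energy statement only gives control on compact $J$, not global scattering (cf.\ Remark~1.3).

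Second, the role of $E(u_0)<4\pi$ is not a no-bubbling or sub-ground-state-mass condition in the sense you describe. It is the \emph{exact} inequality $\|\psi^\pm\|_{L^2}<2$ (since $E(u)=\pi\|\psi^\pm\|_{L^2}^2$), and this is what the reconstruction Lemma~4.1 requires: under $\|\psi^+\|_{L^2}<2$ one has $A_2>1-\tfrac14\|\psi^+\|_{L^2}^2>0$, which is needed to solve the ODE system for $(A_2,\psi_2)$ from $\psi^+$ and to recover $u$ with Lipschitz dependence. Without this quantitative use of the threshold, the map $\psi^+\mapsto u$ is not well-defined.

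Finally, you omit a step the paper treats carefully: the solutions $(\psi^+,\psi^-)$ of the PDE system must be shown to satisfy the compatibility relation $\partial_r(\sinh r(\psi^+-\psi^-))=-A_2(\psi^++\psi^-)$ for all $t$ (Theorem~5.6), which is what licenses the reconstruction of a genuine Schr\"odinger map from the gauge fields. This is proved by deriving a linear Schr\"odinger equation for the defect $F=D_2\psi_1-D_1\psi_2$ and using Gronwall; it does not come for free from the contraction argument.
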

\noindent\emph{Remark 1.2.} In Theorem \ref{main result 2}, we restrict the map $u$ in the class $\mathcal{E}_0$ for initial data $u_0\in \mathfrak{H}^1$. To obtain the existence of solutions in $\mathfrak{H}^1$, we need to prove the Lipschitz continuity of $u$ with respect to $u_0$. If the map $u\in \mathcal{E}_{\lambda}$ for $\lambda>0$, then the third component $u_3$ of $u$ does not convergence to $1$ as $r\rightarrow\infty$, thus the argument of Lipschitz continuity fails. Therefore, we need to restrict $u$ in $\mathcal{E}_0$.

\noindent\emph{Remark 1.3.} In Theorem \ref{main result 2}, scattering for small data is not expected generally. Represented in the Coulomb gauge, (\ref{Schrodinger map}) can be reduced to the coupled mass-critical Schr\"{o}dinger equations with potentials, i.e $(\psi^+,\ \psi^-)$-system. However, the Schr\"{o}dinger operator admits discrete spectrum in one equation of the system which is in sharp contrast with the Schr\"{o}dinger map from $\R^2$. In fact, we show that the $L^4L^4$-bound for $\psi^{\pm}$ depends on the compact interval $J\subset\R$, which leads to the $L^{\infty}_t(J;\mathfrak{H}^1)$-bound for $u$ depends on interval $J$.

Theorem \ref{main result 1} and \ref{main result 2} is of similar flavor to the result of \cite{Mc,BeIoKeTaS2} in the flat domain $\R^2$. The first step is to prove the local existence for Schr\"{o}dinger map with data $u_0\in \mathfrak{H}^3$ by approximation of wave maps (see \cite{Mc}). The second step is to show the existence for equivariant Schr\"{o}dinger map with data $u_0\in \mathfrak{H}^1$. Since we restrict ourselves to the class of equivariant Schr\"{o}dinger maps, the symmetry allow us to use Coulomb gauge. The Coulomb gauge condition impose some restriction on the connection form $A$ , which allow us to choose the particular solution $A_1=0$. Using the Coulomb gauge as our choice of frame on $T_u\H^2$, we can rewrite the equations for $\partial_r u$ and $\partial_{\theta}u$ which lead to a $(\psi^+,\ \psi^-)$-system of mass-critical Schr\"{o}dinger equations with potentials. Then it suffices to consider the Cauchy problem of the $(\psi^+,\ \psi^-)$-system. In order to establish the well-posedness for data in the space $L^2(\H^2)$, we prove the Strichartz estimates for Schr\"{o}dinger operator with such potentials. In fact, we can get the dispersive estimates for $0<t<1$ with more general potentials $V\geq 0$ and $V\in e^{-\alpha r}L^{\infty}(\H^2)$ for $\alpha \geq 1$. Since our interest lies in the solutions which correspond to the geometric flow, we show that the solutions of the system satisfy the compatibility condition. To construct the Schr\"{o}dinger map $u$ from $\psi^{\pm}$, the key observation is that $\psi^+$ or $\psi^-$ contain all the information of the map as in \cite{BeIoKeTaS2}. Hence, we can recover the map $u(t)$ from $\psi^{\pm}(t)$ for initial data $R_{\pm}\psi^{\pm}(0)\in H^2$. Furthermore by the result in Theorem \ref{main result 1}, we show that the map $u(t)$ is a Schr\"{o}dinger map for data $u_0$ in $\mathfrak{H}^3$. At the same time, we obtain the Lipschitz continuity of $u(t)$ with respect to $u_0$ in $\mathfrak{H}^1$, which gives Theorem \ref{main result 2}.

There are two main obstacles in the above arguments. One is the a priori higher order energy estimates for approximate wave map equations, which guarantees the uniform lifespan $T>0$ for approximate solutions. In order to simplify the computation, the global system of coordinates related to the Iwasawa decomposition is used. Meanwhile the uniformly estimates follows from a bootstrap argument. The other obstacle lies in the establishment of the well-posedness for the coupled Schr\"odinger system with potentials. Indeed, the system is composed of two coupled mass-critical Schr\"{o}dinger equations with potentials. One of the equations admits Schr\"{o}dinger operator with positive potential, which has only purely absolutely continuous spectrum $[\frac{1}{4},\ \infty)$. The dispersive estimate for $t>1$ has been provided by \cite{BoMa}. So we only need to establish the similar estimate for $0<t<1$, namely
\begin{equation}                   \label{dispersive estimate introduction}
\left\lVert e^{it(\Delta_{\H^2}-V)}\right\rVert_{L^1\rightarrow L^{\infty}}\lesssim t^{-1},
\end{equation}
for nonnegative potential $V\in e^{-\alpha r}L^{\infty}(\H^2)$, $\alpha\geq 1$. We make use of the kernel of resolvent introduced by \cite{Ba} frequently. By Birman-Schwinger type resolvent expansion, the resolvent $R_V$ can be expressed as a series with respect to $R_0,\ R_V$ and $V$, then the Schr\"{o}dinger propagator in (\ref{dispersive estimate introduction}) can be written as a series. Since the dominant terms only depend on $R_0$ and $V$, we will use the pointwise bounds for free resolvent kernel and the Lemma \ref{important lemma}. For the remainder term, we use the meromorphic continuity of resolvent $R_V$ in Lemma \ref{R_V}. The other equation admits Schr\"{o}dinger operator with negative potential which  has at least a discrete spectrum $0$ even though it is extremely difficult to describe. Since we are dealing with the small data problem, the potential can be regarded as a perturbation term of the nonlinearity here.

The rest of the paper is organized as follows: In Section 2 we recall the hyperbolic spaces, function spaces, basic inequalities and the Fourier transformation. In Section 3 we use the approximating scheme to prove local well-posedness for Schr\"{o}dinger map (\ref{Schrodinger map}) in $\mathfrak{H}^3$, i.e Theorem \ref{main result 1}. In Sections 4 we introduce the Coulomb gauge, in which the Schr\"{o}dinger map can be written as two coupled Schr\"{o}dinger equations, i.e $(\psi^+,\ \psi^-)$-system. Conversely, if we have $\psi^+\in L^2$, we can reconstruct the Schr\"{o}dinger map $u$. In Sections 5 we provide the Strichartz estimates for operator $-\Delta_{\H^2}+V$, then we get the well-posedness of $(\psi^+,\ \psi^-)$-system for data $\psi^{\pm}_0\in L^2$. Finally,we finish the proof of Theorem \ref{main result 2}.

\section{Preliminaries}
In this section we review the geometry of hyperbolic space and the Fourier transformation.

\subsection{Hyperbolic spaces}
We consider the Minkowski space $\mathbb{R}^{d+1}$ for $d \geq 2$ with the Minkowski metric $(dx^1)^2+\cdots +(dx^d)^2-(dx^{d+1})^2$, and we can define the bilinear form on $\mathbb{R}^{d+1} \times \mathbb{R}^{d+1}$,
$$[x,y]=-x^1 y^1- \cdots -x^d y^d+ x^{d+1} y^{d+1}.$$
Then hyperbolic space $\mathbb{H}^d$ is defined as
$$\mathbb{H}^d=\{x \in \mathbb{R}^{d+1}: [x,x]=1 , x^{d+1}>0\},$$
and the Riemannian metric $g$ on $\mathbb{H}^d$ is induced by the Minkowski metric on $\mathbb{R}^{d+1}$. We take the point $\textbf{0}:=(0,\cdots,0,1) \in \R ^{d+1}$ as the origin in $\H ^d$.

We define $(\mathbb{G},\circ);=(SO(d,1),\circ)$ as the connected Lie group of $(d+1)\times (d+1)$ matrices that leave the bilinear form $[\cdot,\cdot]$ invariant. We have $X\in SO(d,1)$ if and only if
$$X^t \cdot I_{d,1} \cdot X=I_{d,1}, \mbox{   }  \det X=1, \mbox{    }X_{d+1,d+1}>0, $$
where $I_{d,1}$ is the diagonal matrix ${\rm diag}[1,\cdots,1,-1]$. Let $\mathbb{K}=SO(d)$ denote the subgroup of $SO(d,1)$ that fix the origin $\textbf{0}$. Indeed, $\mathbb{K}$ is a compact subgroup of rotations acting on the variables $(x^1,\cdots,x^d)$. We can thus identify $\mathbb{H}^d$ with the symmetric space $\mathbb{G}/\mathbb{K}$. For every $h\in \mathbb{G}$ we can define the map $$L_{h}: \H^d \longrightarrow \H^d, \mbox{ }L_h (x)=h\cdot x.$$
A function $f:\H^d \rightarrow \R$ is called $\mathbb{K}$-invariant or radial, if for all $k\in \mathbb{K}$ and for all $x\in \H^d$ we have $$f(k\cdot x)=f(x).$$
Then we have the Cartan decomposition of $h\in \mathbb{G}$, namely
\begin{align*}
h=k \circ a_r \circ \tilde{k},\ \ \ a_r\in \mathbb{A}_+,\ k,\tilde{k}\in \mathbb{K},
\end{align*}
where
\begin{align*}
a_r:=\left( \begin{matrix}
I_{d-1\times d-1}& 0& 0\\
0& \cosh r& \sinh r \\
0 & \sinh r & \cosh r \\
\end{matrix}
\right), \ \ \mathbb{A}_+:=\{ a_r:r\in [0,\infty) \}.
\end{align*}

We introduce two convenient global systems of coordinates on $\H^d$. One of the systems is geodesic polar coordinates:
\begin{eqnarray}           \label{global coordinate 1}
\phi :\R_+ \times \S^{d-1} \rightarrow \H^d \subset \R^{d+1} ,  \ \ \ \phi(r,\omega) =(\sinh r\cdot \omega,\  \cosh r).
\end{eqnarray}
For $d=2$, $\phi$ can be written explicitly as
\begin{eqnarray*}
\phi :\R_+ \times [0,2\pi) \rightarrow \H^d \subset \R^{d+1} ,  \ \ \ \phi(r,\theta) =(\sinh r \cos \theta,\  \sinh r \sin \theta,\  \cosh r),
\end{eqnarray*}
in these coordinates, the hyperbolic metric $g$ is given by $g=dr^2+ \sinh^2 r d\theta^2$, the volume element $\mu(dx)$ on $\H^2$ is given by $\sinh r dr d\theta$ and the Laplace-Beltrami operator is given by
\begin{eqnarray*}
\Delta_{\H^2}=\partial_r^2+\coth r \partial_r+\frac{\partial_{\theta}^2}{\sinh ^2 r}.
\end{eqnarray*}
The other global system of coordinates is defined as follows \cite{IoSta}:
\begin{eqnarray}          \label{global coordinate 2}
\varphi: \R^{d-1} \times \R \rightarrow \H^d,\ \ \varphi(v,s)=(\ \sinh s+e^{-s}\frac{|v|^2}{2},e^{-s}v_1,\ \cdots \ ,e^{-s}v_{d-1},\ \cosh s+e^{-s}\frac{|v|^2}{2}),
\end{eqnarray}
using these coordinates we have the induced metric
\begin{eqnarray*}
g=e^{-s}[dv_1^2+\cdots + dv_{d-1}^2+e^{2s}ds^2].
\end{eqnarray*}
If we fix the global orthonormal frame
\begin{eqnarray*}
e_{\alpha}=e^s \partial_{v_{\alpha}}\ \ {\rm for}\ \alpha =1,\cdots, d-1,\ {\rm and}\ \ e_d=\partial_s,
\end{eqnarray*}
we compute the commutators
\begin{eqnarray*}
[e_d,e_{\alpha}]=e_{\alpha},[e_{\alpha},e_{\beta}]=[e_d,e_d]=0\ \ {\rm for\ any}\ \alpha,\beta=1,\cdots,d-1,
\end{eqnarray*}
and the covariant derivatives
\begin{eqnarray*}
\nabla_{e_{\alpha}}e_{\beta}=\delta_{\alpha \beta}e_d,\ \nabla_{e_{\alpha}}e_d=-e_{\alpha},\ \nabla_{e_d}e_{\alpha}=\nabla_{e_d}e_d=0,\ \ {\rm for}\ \alpha,\beta=1,\cdots,d-1.
\end{eqnarray*}

\subsection{Function spaces and basic inequalities}
Here we define some relevant function spaces on $\H^2$ and recall some basic inequalities. For smooth function $f:\H^2\rightarrow\R$, the $L^p(\H^2)$-norm for $1\leq p\leq\infty$ are defined by
\begin{equation*}
\left\lVert f\right\rVert_{L^p(\H^2)}:=(\int_{\H^2}|f(x)|^p\ {\rm dvol}_g)^{1/p},\ \ \ \left\lVert f\right\rVert_{L^{\infty}(\H^2)}:=\sup\limits_{x\in\H^2}|f(x)|.
\end{equation*}
Also we can define the Sobolev norm $H^k(\H^2;\R)$ of $f$, namely
\begin{equation*}
\left\lVert f\right\rVert_{H^k}^2:=\sum\limits_{l\leq k}\int_{\H^2}|\nabla^l f|^2\ {\rm dvol}_g,
\end{equation*}
where $\nabla^l f$ is the $l$-th covariant derivative of $f$.
By \cite{LaOhShaCauchy}, we have
\begin{equation}
\left\lVert f\right\rVert_{H^{2l}}\simeq \left\lVert (-\Delta)^l f\right\rVert_{L^2},\ \ \ {\rm for}\ l=0,1,2,\cdots,
\end{equation}
and
\begin{equation}              \label{equivalent of Hl of u}
\left\lVert f\right\rVert_{H^{2l+1}}\simeq  \left\lVert\nabla^{2l+1} f\right\rVert_{L^2}\simeq \left\lVert \nabla(-\Delta)^l f\right\rVert_{L^2},\ \ \ {\rm for}\ l=0,1,2,\cdots.
\end{equation}
We will often use these equivalent definitions.

As a $\R^3$-valued function, we can define the extrinsic Sobolev spaces $\mathcal{H}^k(\H^2;\R^3)$. We say that $u$ has finite $\mathcal{H}^k$-norm with respect to $u(\infty):=\lim\limits_{r\rightarrow\infty}u(r)$ if
\begin{equation*}
u\in\{ u:\H^2\rightarrow\S^2\subset\R^3\big| u_i-u_i(\infty)\in H^k,\ \ {\rm for}\ i=1,\ 2,\ 3\}.
\end{equation*}
Denote
\begin{equation*}
\left\lVert u \right\rVert_{\mathcal{H}^k(\H^2;\R^3)}:=\sum\limits_{i=1}^3\left\lVert  u_i-u_i(\infty) \right\rVert_{H^k}
\end{equation*}
In the polar coordinate (\ref{global coordinate 1}), the equivariant maps are easily reduced to maps of a single variable $r$. For smooth radial function $f$, we define a natural space $\dot{H}^1_e$ by
\begin{equation*}
\left\lVert f\right\rVert_{\dot{H}^1_e}:=\left\lVert \partial_r f\right\rVert_{L^2}+\left\lVert \frac{f}{\sinh r}\right\rVert_{L^2},
\end{equation*}
then for such $f$, we have Sobolev embedding
\begin{equation}                    \label{Sobolev embedding H1e}
\left\lVert f\right\rVert_{L^{\infty}} \lesssim \left\lVert f\right\rVert_{\dot{H}^1_e}.
\end{equation}

We now recall the Sobolev inequalities (see \cite{LiZhao}, \cite{LaOhShaCauchy}).
\begin{lemma}                  \label{Sobolev inequality}
Let $f\in C_c^{\infty}$, then for $1<p<\infty$, $p\leq q\leq \infty$, $0<\theta<1$, $1<r<2$, $r\leq l<\infty$, the following inequalities hold:
\begin{align}                  \label{Sobolev inequality 1}
\left\lVert f\right\rVert_{L^2}\lesssim & \left\lVert\nabla f\right\rVert_{L^2},\\         \label{Sobolev inequality 2}
\left\lVert f\right\rVert_{L^q}\lesssim & \left\lVert f\right\rVert_{L^p}^{1-\theta}\left\lVert \nabla f\right\rVert_{L^2}^{\theta},\ \ \ {\rm for}\ \ \frac{1}{q}=\frac{1}{p}-\frac{\theta}{2},\\          \label{Sobolev inequality 3}
\left\lVert f\right\rVert_{L^l}\lesssim &\left\lVert \nabla f\right\rVert_{L^r},\ \ \ \ \ \ \ \ \ \ \ \ \ \ \ {\rm for}\ \ \frac{1}{l}=\frac{1}{r}-\frac{1}{2},\\           \label{Sobolev inequality 4}
\left\lVert f\right\rVert_{L^{\infty}}\lesssim &\left\lVert (-\Delta)^{\frac{\alpha}{2}} f\right\rVert_{L^2},\ \ \ \ \ {\rm for}\ \ \alpha>1,\\                       \label{Sobolev inequality 5}
\left\lVert \nabla f\right\rVert_{L^2}\sim &\left\lVert (-\Delta)^{\frac{1}{2}}f\right\rVert_{L^2}.
\end{align}
\end{lemma}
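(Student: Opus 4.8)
All five inequalities are standard on $\H^2$ and may be quoted from \cite{LiZhao,LaOhShaCauchy}; I will outline the underlying argument. The plan is to split the statement into two algebraic identities that come from functional calculus and three genuine embeddings, and to reduce the embeddings to two structural facts about $\H^2$: the bottom of the $L^2$-spectrum of $-\Delta_{\H^2}$ equals $\tfrac14>0$, and $\H^2$, being Euclidean at small scales, supports a Euclidean-type first-order Sobolev inequality.

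First I would dispose of the two identities. Since $\H^2$ is complete, $-\Delta_{\H^2}$ is nonnegative and essentially self-adjoint on $C_c^\infty$, so integrating by parts gives $\|\nabla f\|_{L^2}^2=\langle-\Delta f,f\rangle=\|(-\Delta)^{1/2}f\|_{L^2}^2$, which is \eqref{Sobolev inequality 5} (with equality). The spectral gap then yields at once $\|\nabla f\|_{L^2}^2=\langle-\Delta f,f\rangle\ge\tfrac14\|f\|_{L^2}^2$, i.e.\ the Poincar\'e inequality \eqref{Sobolev inequality 1}.

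The core is \eqref{Sobolev inequality 3}, $\|f\|_{L^l}\lesssim\|\nabla f\|_{L^r}$ with $\tfrac1l=\tfrac1r-\tfrac12$ and $1<r<2$. I would first establish the endpoint $\|f\|_{L^2}\lesssim\|\nabla f\|_{L^1}$ from the sharp hyperbolic isoperimetric inequality $L^2\ge4\pi A+A^2$ — which in particular gives the Euclidean isoperimetric inequality $L\gtrsim A^{1/2}$ — together with the coarea formula applied to the superlevel sets of $|f|$; an equivalent route is to use the on-diagonal heat kernel bound $\sup_x p_t(x,x)\lesssim t^{-1}$ on $\H^2$, which by Varopoulos' theorem is equivalent to this family of Sobolev inequalities. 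The whole range $1<r<2$ then follows by applying the endpoint inequality to $|f|^{\gamma}$ with $\gamma=r'/(r'-2)>1$ and H\"older. For \eqref{Sobolev inequality 2} I would interpolate: by H\"older $\|f\|_{L^q}\le\|f\|_{L^p}^{1-\theta}\|f\|_{L^{q_1}}^{\theta}$ for a suitable $q_1\in[2,\infty)$, and $\|f\|_{L^{q_1}}\lesssim\|f\|_{H^1}\lesssim\|\nabla f\|_{L^2}$, the first being the subcritical Sobolev embedding $H^1(\H^2)\hookrightarrow L^{q_1}(\H^2)$ (a consequence of the same heat kernel bounds) and the second being \eqref{Sobolev inequality 1}. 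Finally, for \eqref{Sobolev inequality 4} with $\alpha>1=\tfrac d2$, I would use subordination $(-\Delta)^{-\alpha/2}=\tfrac1{\Gamma(\alpha/2)}\int_0^\infty t^{\alpha/2-1}e^{t\Delta}\,dt$ and note that, by the semigroup property and Cauchy--Schwarz, the squared $L^2_y$-norm of the integral kernel of $(-\Delta)^{-\alpha/2}$ at a point $x$ equals $c_\alpha\int_0^\infty t^{\alpha-1}p_t(x,x)\,dt$; using $p_t(x,x)\lesssim t^{-1}$ for $t\le1$ and the spectral gap for $t\ge1$, this is finite uniformly in $x$ precisely when $\alpha>1$, so $(-\Delta)^{-\alpha/2}\colon L^2\to L^\infty$ is bounded, and applying it to $g=(-\Delta)^{\alpha/2}f$ gives \eqref{Sobolev inequality 4}.

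The main obstacle is \eqref{Sobolev inequality 3} (and the subcritical embedding feeding into \eqref{Sobolev inequality 2}): because $\H^2$ has exponential volume growth it is not of doubling type, so a volume-doubling Sobolev inequality is unavailable. The correct mechanism is that $\H^2$ is Euclidean at small scales (Ricci curvature and injectivity radius bounded below), which pins down the local behavior, while the spectral gap — equivalently the exponential decay of the heat kernel for large times, or the quadratic gain $A^2$ in the isoperimetric inequality — controls the behavior at infinity. Once \eqref{Sobolev inequality 3} and the subcritical embedding are in hand, the remaining estimates are routine interpolation and functional calculus.
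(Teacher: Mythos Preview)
The paper does not prove this lemma at all: it is stated as a recollection of known inequalities and simply refers the reader to \cite{LiZhao,LaOhShaCauchy}. Your proposal acknowledges this and then supplies an outline that goes well beyond what the paper does; the outline is correct in spirit and identifies the two genuine inputs (the spectral gap $\sigma(-\Delta_{\H^2})=[\tfrac14,\infty)$ and the local Euclidean behaviour encoded in the isoperimetric/heat-kernel bound), which is exactly what underlies the cited references.

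One small technical remark on your route to \eqref{Sobolev inequality 2}: the H\"older interpolation $\|f\|_{L^q}\le\|f\|_{L^p}^{1-\theta}\|f\|_{L^{q_1}}^{\theta}$ with the \emph{same} $\theta$ as in the statement forces $1/q_1=1/p-1/2$, so a $q_1\in[2,\infty)$ exists only when $1<p<2$; for $p\ge 2$ this particular interpolation breaks down (and at the endpoint $q=\infty$ the inequality is in fact false locally, as the Euclidean scaling shows). The fix is the same $|f|^{\gamma}$ trick you used for \eqref{Sobolev inequality 3}: apply the $L^1\to L^2$ Sobolev inequality to $|f|^{2}$ to get $\|f\|_{L^4}^2\lesssim\|f\nabla f\|_{L^1}\le\|f\|_{L^2}\|\nabla f\|_{L^2}$, then bootstrap. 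Since the paper only invokes \eqref{Sobolev inequality 2} for finite $q$ (chiefly $p=2$, $q=4,6,8$), this is harmless. Otherwise your argument is sound and, compared with the paper, strictly more informative.
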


We also recall the diamagnetic inequality (see \cite{LiZhao}, \cite{LaOhShaCauchy}).
\begin{lemma}                     \label{diamagnetic inequality lemma}
If $T$ is some $(r,\ s)$-type tension or tension matrix defined on $\H^2$, then in the distribution sense, one has
\begin{equation}                 \label{diamagnetic inequality}
\big|\nabla|T|\big| \leq |\nabla T|.
\end{equation}
\end{lemma}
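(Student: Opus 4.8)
The plan is to reduce everything to the single pointwise fact that the connection in question is metric, and then to handle the zero set $Z=\{x:T(x)=0\}$ by a standard $\varepsilon$-regularization; the hyperbolic geometry enters only through $\nabla g=0$, so the argument is the same as in the flat case.

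First I would treat the region where $T$ does not vanish. On the open set where $|T|>0$, metric compatibility gives $\nabla|T|^2=2\langle\nabla T,T\rangle$, hence $\nabla|T|=\langle\nabla T,T\rangle/|T|$, and a fiberwise Cauchy--Schwarz inequality yields $\big|\nabla|T|\big|_g\le|\nabla T|_g$ there. To extend this across $Z$, I would introduce $T_\varepsilon:=\sqrt{|T|^2+\varepsilon^2}$ for $\varepsilon>0$, which is smooth and bounded below by $\varepsilon$; the same computation gives $\nabla T_\varepsilon=\langle\nabla T,T\rangle/T_\varepsilon$ and therefore $\big|\nabla T_\varepsilon\big|_g\le|\nabla T|_g\,|T|/T_\varepsilon\le|\nabla T|_g$ on all of $\H^2$, with a bound independent of $\varepsilon$.

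Next I would pass to the limit in the weak formulation. For $\phi\in C_c^\infty(\H^2)$ with $\phi\ge0$ and a compactly supported smooth vector field $X$, integration by parts on $(\H^2,g)$ gives
\begin{equation*}
\Big|\int_{\H^2} T_\varepsilon\,{\rm div}(\phi X)\,{\rm dvol}_g\Big|=\Big|\int_{\H^2}\phi\,\langle\nabla T_\varepsilon,X\rangle\,{\rm dvol}_g\Big|\le\int_{\H^2}\phi\,|\nabla T|_g\,|X|_g\,{\rm dvol}_g.
\end{equation*}
Since $|T|\le T_\varepsilon\le|T|+\varepsilon$, one has $T_\varepsilon\to|T|$ in $L^1_{\rm loc}$ as $\varepsilon\downarrow0$, while the right-hand side is independent of $\varepsilon$; letting $\varepsilon\downarrow0$ yields the same inequality with $T_\varepsilon$ replaced by $|T|$. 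This is exactly the statement that the distributional gradient of $|T|$ is represented by an $L^1_{\rm loc}$ section with $\big|\nabla|T|\big|\le|\nabla T|$ almost everywhere. The matrix-valued ("tension matrix") case is handled verbatim, the only ingredients being a fiber inner product and a compatible connection, both available for tensor and matrix bundles over $\H^2$.

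The step I expect to require the most care — though it is routine — is the passage across the zero set $Z$, where $|T|$ need not be classically differentiable: one must check that no singular part of $\nabla|T|$ is supported on $Z$. The regularization $T_\varepsilon$ resolves this because its gradient is controlled by $|\nabla T|$ uniformly in $\varepsilon$ and $T_\varepsilon\to|T|$ in $L^1_{\rm loc}$, so no mass concentrates on $Z$; everything else is fiberwise Cauchy--Schwarz together with $\nabla g=0$.
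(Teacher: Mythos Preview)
Your argument is correct: this is the standard Kato-type proof of the diamagnetic inequality, and the $\varepsilon$-regularization $T_\varepsilon=\sqrt{|T|^2+\varepsilon^2}$ is precisely the right device to rule out a singular contribution on the zero set. Note, however, that the paper does not actually supply a proof of this lemma; it simply records the inequality and cites \cite{LiZhao} and \cite{LaOhShaCauchy}. Your write-up therefore fills in what the paper omits, and the approach you give is the expected one found in those references.
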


\begin{lemma}                   \label{equivalent of Sobolev lemma}
Let $u:\H^2\rightarrow\S^2$ be smooth map with $u(\infty)=\lim\limits_{r\rightarrow\infty}u(r)$, then
\begin{equation}
\left\lVert u\right\rVert_{\mathfrak{H}^3}\sim \left\lVert u\right\rVert_{\mathcal{H}^3}
\end{equation}
in the sense that there exist polynomials $P$ and $Q$ such that
\begin{equation}              \label{equivalent of Sobolev}
\left\lVert u\right\rVert_{\mathfrak{H}^3}\leq P(\left\lVert u\right\rVert_{\mathcal{H}^3}),\ \ \ \left\lVert u\right\rVert_{\mathcal{H}^3}\leq Q(\left\lVert u\right\rVert_{\mathfrak{H}^3}).
\end{equation}
\end{lemma}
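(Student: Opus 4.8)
The plan is to express the intrinsic norm $\|u\|_{\mathfrak{H}^3}^2 = \sum_{i=1}^{3}\int_{\H^2}|\nabla^{i-1}du|_g^2\,{\rm dvol}_g$ in terms of the extrinsic norm $\|u\|_{\mathcal{H}^3} = \sum_{i=1}^{3}\|u_i - u_i(\infty)\|_{H^3}$ and vice versa, using the constraint $u_1^2+u_2^2+u_3^2 = 1$ repeatedly. The first observation is that $du$, viewed intrinsically, is the $\R^3$-valued one-form with components $du = (du_1,du_2,du_3)$, and since $u$ takes values in $\S^2$ the tangential projection is automatic: $|du|_g^2 = \sum_{i=1}^3 |d(u_i - u_i(\infty))|_g^2 = \sum_{i=1}^3 |\nabla(u_i-u_i(\infty))|_g^2$. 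This already gives the $i=1$ term of $\mathfrak{H}^3$ as exactly the $\|\nabla(u_i-u_i(\infty))\|_{L^2}^2$ pieces of $\mathcal{H}^3$ (using \eqref{equivalent of Hl of u} to pass between $\|\cdot\|_{H^1}$ and $\|\nabla\cdot\|_{L^2}$, and \eqref{Sobolev inequality 1} to control the undifferentiated term).

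First I would handle the direction $\|u\|_{\mathfrak{H}^3}\leq P(\|u\|_{\mathcal{H}^3})$. The intrinsic covariant derivatives $\nabla^{i-1}du$ differ from the ambient iterated derivatives $d^{i-1}du$ only by terms built from the second fundamental form of $\S^2\subset\R^3$ and its derivatives; since $\S^2$ is a fixed smooth compact submanifold, the second fundamental form is a smooth bounded function of $u$, so schematically $\nabla^{2}du = (d^3 u)^{\top} + \sum (\text{curvature}) \cdot du \cdot du \cdot du + \cdots$, a universal polynomial expression in $u$ (bounded since $|u|=1$) and in $du,\,d^2u,\,d^3u$. Estimating the $L^2$ norm of each such term by Hölder and the Sobolev/Gagliardo--Nirenberg inequalities of Lemma \ref{Sobolev inequality} (to trade, e.g., an $L^\infty$ bound on $du$ against $\|\nabla^2 du\|_{L^2}$-type quantities, and an $L^4$ bound against interpolation), and absorbing all ambient derivatives of $u$ into $\|u_i-u_i(\infty)\|_{H^3}$, produces a polynomial bound $P$. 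The reverse direction $\|u\|_{\mathcal{H}^3}\leq Q(\|u\|_{\mathfrak{H}^3})$ is the inverse bookkeeping: one solves the same schematic identities for the ambient derivatives $d^k u$ in terms of $\nabla^{j}du$ with $j\le k-1$ and lower-order ambient derivatives, again with coefficients that are smooth bounded functions of $u$; induction on $k=1,2,3$ together with the Sobolev inequalities closes this. The undifferentiated contributions $\|u_i-u_i(\infty)\|_{L^2}$ are controlled by \eqref{Sobolev inequality 1} from $\|\nabla u\|_{L^2}\le \|u\|_{\mathfrak H^1}$.

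The main obstacle is bookkeeping rather than conceptual: one must write out the precise commutation/Gauss--Weingarten-type identities relating $\nabla^{i-1}du$ to the ambient partials $\partial^{i}u$, keep track of the second fundamental form $A(u)$ of the embedding $\S^2\hookrightarrow\R^3$ and its derivatives $\nabla A(u)$, and verify that every nonlinear term that appears can be placed in $L^2$ by the available Gagliardo--Nirenberg estimates \eqref{Sobolev inequality 2}--\eqref{Sobolev inequality 4} on $\H^2$ — in particular one needs $\|du\|_{L^\infty}$ or $\|du\|_{L^4}$ controlled by $\|u\|_{\mathfrak H^3}$, which at the top order ($\nabla^2 du$, i.e. three derivatives) is exactly borderline and is where the diamagnetic inequality (Lemma \ref{diamagnetic inequality lemma}) and the $\dot H^1_e$ embedding \eqref{Sobolev embedding H1e} may be invoked to get uniform pointwise control of $|du|$. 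Once all terms are seen to be estimable, collecting the (finitely many) resulting inequalities yields polynomials $P,Q$ as claimed, and I would present the argument by induction on the order of the derivative, treating $k=1$ as the base case done above and $k=2,3$ by the schematic identities.
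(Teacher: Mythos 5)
Your proposal is correct and follows essentially the same route as the paper: the paper simply writes out your schematic Gauss--Weingarten identities explicitly in coordinates (the normal component of $\partial_{ij}u$ is $(\partial_i u\cdot\partial_j u)\,u$, giving a $|du|^4$ correction at second order and a $|\nabla^2 u_i|^2|du|^2$ correction at third order), and closes each resulting nonlinear term exactly as you indicate, via H\"older, the Gagliardo--Nirenberg inequality \eqref{Sobolev inequality 2}, and the diamagnetic inequality \eqref{diamagnetic inequality}. No gap.
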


\begin{proof}
In order to prove (\ref{equivalent of Sobolev}), we use the polar coordinates (\ref{global coordinate 1}). For $k=1$, we have
\begin{equation*}
|du|^2=\<\partial_r u,\partial_r u\>+\frac{1}{\sinh^2 r}\<\partial_{\theta}u,\partial_{\theta}u\>=\sum\limits_{i=1}^{3}|du_i|^2,
\end{equation*}
hence, $\left\lVert u\right\rVert_{\mathfrak{H}^1}\leq \left\lVert u\right\rVert_{\mathcal{H}^1}$. Conversely, by (\ref{Sobolev inequality 1}), we obtain
\begin{align*}
\left\lVert u\right\rVert_{\mathcal{H}^1}\lesssim &\left\lVert u\right\rVert_{\mathfrak{H}^1}+\sum\limits_{i=1}^3\left\lVert u_i-u_i(\infty)\right\rVert_{L^2},\\
\lesssim & \left\lVert u\right\rVert_{\mathfrak{H}^1}.
\end{align*}
For $k=2$, we have
\begin{equation*}
|\nabla du|^2=g^{ii}g^{jj}\big[ |\partial_{ij}u-\Gamma_{ij}^k \partial_k u|^2-(\partial_{ij}u\cdot u)^2 \big],
\end{equation*}
and
\begin{equation}      \label{nabla2 ui}
\sum\limits_{i=1}^3 |\nabla^2 u_i|^2=g^{ii}g^{jj} |\partial_{ij}u-\Gamma_{ij}^k \partial_k u|^2.
\end{equation}
Therefore, $\left\lVert u\right\rVert_{\mathfrak{H}^2}\lesssim \left\lVert u\right\rVert_{\mathcal{H}^2}$ immediately. Conversely, $\partial_j u\cdot u=0$ implies
\begin{align*}
g^{ii}g^{jj}(\partial_{ij}u\cdot u)^2=g^{ii}g^{jj}(\partial_i u \partial_j u)^2=|du|^4,
\end{align*}
then, by (\ref{Sobolev inequality 2}) and (\ref{diamagnetic inequality}), we have
\begin{align*}
\left\lVert u\right\rVert_{\mathcal{H}^2}^2 \lesssim & \left\lVert u\right\rVert_{\mathfrak{H}^2}^2+\left\lVert du\right\rVert_{L^4}^4,\\
\lesssim & \left\lVert u\right\rVert_{\mathfrak{H}^2}^2+\left\lVert \nabla|du|\right\rVert_{L^2}^2\left\lVert du\right\rVert_{L^2}^2,\\
\lesssim & \left\lVert u\right\rVert_{\mathfrak{H}^2}^2+\left\lVert \nabla du\right\rVert_{L^2}^2\left\lVert du\right\rVert_{L^2}^2,\\
\lesssim & \left\lVert u\right\rVert_{\mathfrak{H}^2}^2+\left\lVert u\right\rVert_{\mathfrak{H}^2}^4.
\end{align*}
For $k=3$, we have
\begin{align*}
|\nabla^2 du|^2= & g^{ii}g^{jj}g^{kk}\big|\partial_{ijk}u-\partial_i (\Gamma_{jk}^l \partial_l u)-\Gamma_{ij}^l \partial_{lk}u +\Gamma_{ij}^l\Gamma_{lk}^p\partial_p u\\
&-\Gamma_{ik}^l \partial_{jl}u+\Gamma_{ik}^l\Gamma_{jl}^p\partial_p u +3(\partial_{ij}u\partial_k u -\Gamma_{ij}^l \partial_l u\partial_k u)u  \big|^2,
\end{align*}
and
\begin{align*}
|\nabla^3 u_q|^2=& g^{ii}g^{jj}g^{kk}\big| \partial_{ijk}u_q-\partial_i(\Gamma_{jk}^l \partial_l u_q)-\Gamma_{ij}^l\partial_{lk}u_p\\
&+\Gamma_{ij}^l\Gamma_{lk}^p\partial_p u_q-\Gamma_{ik}^l \partial_{jl}u_q+\Gamma_{ik}^l \Gamma_{jl}^p\partial_pu_q   \big|^2.
\end{align*}
By (\ref{nabla2 ui}), we have
\begin{align*}
 g^{ii}g^{jj}g^{kk}\big|(\partial_{ij}u\partial_k u -\Gamma_{ij}^l \partial_l u\partial_k u)u \big|^2 \lesssim  \sum\limits_{i=1}^3|\nabla^2 u_i|^2|du|^2.
\end{align*}
Then
\begin{align*}
\left\lVert u\right\rVert_{\mathfrak{H}^3}^2\lesssim & \sum\limits_{i=1}^3  \left\lVert \nabla^3 u_i\right\rVert_{L^2}^2+ \sum\limits_{i=1}^3 \left\lVert \nabla^2 u_i\right\rVert_{L^4}^2 \left\lVert du\right\rVert_{L^4}^2,\\
\lesssim &\left\lVert u\right\rVert_{\mathcal{H}^3}^2+ \sum\limits_{i=1}^3 \left\lVert \nabla^3 u_i\right\rVert_{L^2}\left\lVert \nabla^2 u_i\right\rVert_{L^2} \left\lVert \nabla du\right\rVert_{L^2}\left\lVert du\right\rVert_{L^2},\\
\lesssim & \left\lVert u\right\rVert_{\mathcal{H}^3}^2+ \left\lVert u\right\rVert_{\mathcal{H}^3}^4.
\end{align*}
Conversely, by (\ref{Sobolev inequality 2}) and (\ref{diamagnetic inequality}), we have
\begin{align*}
\left\lVert u\right\rVert_{\mathcal{H}^3}^2 \lesssim &\left\lVert u\right\rVert_{\mathfrak{H}^3}^2+ \sum\limits_{i=1}^3 \left\lVert |\nabla^2 u_i| |du|\right\rVert_{L^2}^2,\\
\lesssim & \left\lVert u\right\rVert_{\mathfrak{H}^3}^2+ \left\lVert |\nabla  du| |du|+|du|^3\right\rVert_{L^2}^2,\\
\lesssim & \left\lVert u\right\rVert_{\mathfrak{H}^3}^2+ \left\lVert \nabla  du \right\rVert_{L^4}^2\left\lVert du \right\rVert_{L^4}^2 +\left\lVert du \right\rVert_{L^6}^6,\\
\lesssim & \left\lVert u\right\rVert_{\mathfrak{H}^3}^2+ \left\lVert \nabla^2 du\right\rVert_{L^2}\left\lVert \nabla du\right\rVert_{L^2}^2\left\lVert du\right\rVert_{L^2}+\left\lVert \nabla du\right\rVert_{L^2}^4 \left\lVert du\right\rVert_{L^2}^2,\\
\lesssim & \left\lVert u\right\rVert_{\mathfrak{H}^3}^2+\left\lVert u\right\rVert_{\mathfrak{H}^3}^4+\left\lVert u\right\rVert_{\mathfrak{H}^3}^6.
\end{align*}
Therefore, (\ref{equivalent of Sobolev}) are obtained.
\end{proof}

Finally, we state the following estimates, which are often used for radial functions and obtained by Schur's test easily.
\begin{lemma}             \label{basic inequality}
Let $f\in L^p$ be radial function, we have
\begin{align}           \label{basic inequality1}
\left \lVert \frac{1}{\sinh^2 r}\int _0^r \sinh s f(s) ds \right \lVert _{L^p}\lesssim  \left \lVert f \right \lVert _{L^p}, \ \ \ 1<p \leq \infty,\\           \label{basic inequality2}
\left \lVert \frac{\cosh r}{\sinh^2 r}\int _0^r \sinh s f(s) ds \right \lVert _{L^p}\lesssim \left \lVert f \right \lVert _{L^p},\ \ \ 1<p\leq \infty,\\             \label{basic inequality3}
\left \lVert\int_r^{\infty}  \frac{\cosh s}{\sinh s}f(s)\right \lVert_{L^p}\lesssim \left \lVert f \right \lVert_{L^p},\ \ \ 1 \leq p<\infty,\\             \label{basic inequality4}
\left \lVert\int_r^{\infty}  \frac{1}{\sinh s}f(s)\right \lVert_{L^p}\lesssim \left \lVert f \right \lVert_{L^p},\ \ \ 1 \leq p<\infty,\\              \label{basic inequality5}
 \left \lVert \frac{1}{\sinh r} \int_r^{\infty} e ^{\int _{\rho}^r \sinh^{-1} s ds } f(\rho) d\rho \right \lVert_{L^p}\lesssim \left \lVert f \right \lVert_{L^p},\ \ \ 1 \leq p<\infty, \\         \label{basic inequality6}
\left \lVert \frac{1}{r^2}\int _0^r  f(s) sds \right \lVert _{L^p(\R^2)}\lesssim \left \lVert f \right \lVert _{L^p(\R^2)},\ \ \ 1<p\leq \infty,           
\end{align}
\end{lemma}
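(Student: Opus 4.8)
The plan is to prove each of the six estimates \eqref{basic inequality1}--\eqref{basic inequality6} by realizing the operator as an integral operator with an explicit kernel and then invoking Schur's test, keeping in mind that all functions are radial, so the relevant $L^p$ norm on $\H^2$ is $\|f\|_{L^p}^p = 2\pi\int_0^\infty |f(r)|^p \sinh r\, dr$. Accordingly, for a radial operator $Tf(r) = \int_0^\infty K(r,s) f(s)\, ds$ acting on $L^p(\sinh s\, ds)$, I write $Tf(r) = \int_0^\infty \widetilde K(r,s) f(s)\, \sinh s\, ds$ with $\widetilde K(r,s) = K(r,s)/\sinh s$, and Schur's test requires a weight, say $w(r) = e^{\theta r}$ or a power of $\sinh r$, with
\begin{equation*}
\int_0^\infty |\widetilde K(r,s)|\, w(s)\, \sinh s\, ds \lesssim w(r), \qquad \int_0^\infty |\widetilde K(r,s)|\, w(r)\, \sinh r\, dr \lesssim w(s),
\end{equation*}
after which interpolation between the $p=1$ and $p=\infty$ endpoints (or the standard weighted Schur lemma directly) yields boundedness on $L^p$ for the claimed range.

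Concretely, for \eqref{basic inequality1} the kernel is $K(r,s) = \mathbf{1}_{s<r}\,\sinh s/\sinh^2 r$, so $\widetilde K(r,s) = \mathbf{1}_{s<r}/\sinh^2 r$; with the weight $w(r) = e^{-\theta r}$ for a small $\theta>0$ one checks $\int_0^r e^{-\theta s}\sinh s\,\sinh^{-2} r\, ds \lesssim e^{-\theta r}$ using $\sinh r \sim e^r$ for large $r$ and $\sinh r \sim r$ for small $r$ (the small-$r$ regime is where the integrability at $p=1$ vs $p>1$ distinction appears, exactly paralleling the Euclidean estimate \eqref{basic inequality6}), and similarly for the transposed integral; this gives the range $1<p\leq\infty$. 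Estimate \eqref{basic inequality2} is identical up to the extra factor $\cosh r/\sinh r \sim 1$ for large $r$ and $\sim 1/r$ for small $r$, which is harmless since near $r=0$ it is absorbed by the gain coming from $\sinh^{-2}r\int_0^r\sinh s\,ds \sim r/2$ against $\cosh r$, leaving $\cosh r \cdot r /\sinh r \sim 1$; so again $1<p\leq\infty$. Estimates \eqref{basic inequality3} and \eqref{basic inequality4} are the ``adjoint'' tails $\int_r^\infty \coth s\, f(s)\, ds$ and $\int_r^\infty \sinh^{-1}s\, f(s)\, ds$; here $\widetilde K(r,s) = \mathbf{1}_{s>r}\coth s/\sinh s$ resp. $\mathbf{1}_{s>r}/(\sinh^2 s)$, and since $\coth s/\sinh s, \sinh^{-2}s$ decay like $e^{-2s}$ the Schur weight $w \equiv 1$ already works at both endpoints, yielding $1\leq p<\infty$ (the upper endpoint $p=\infty$ now being the excluded one because the constant function is not in the domain of the adjoint statement). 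For \eqref{basic inequality5} the kernel is $K(r,\rho) = \mathbf{1}_{\rho > r}\, e^{\int_\rho^r \sinh^{-1}s\, ds}/\sinh r$; I note $\int_\rho^r \sinh^{-1}s\, ds < 0$ for $\rho>r$ and in fact $e^{\int_\rho^r \sinh^{-1}s\,ds} = \big(\tanh(r/2)/\tanh(\rho/2)\big)$ up to the usual computation of the antiderivative $\log\tanh(s/2)$ of $\sinh^{-1}s$, so $K(r,\rho) \sim \tanh(r/2)/(\sinh r\,\tanh(\rho/2)) \sim 1/(\cosh(r/2)^2 \tanh(\rho/2))$, which behaves like $\mathbf{1}_{\rho>r}\, e^{-r}/\min(\rho,1)$ and is handled by Schur with $w\equiv 1$, giving $1\leq p<\infty$. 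Finally \eqref{basic inequality6} is the classical Hardy-type bound on $\R^2$: $r^{-2}\int_0^r f(s)s\,ds$ has kernel $\widetilde K(r,s) = \mathbf{1}_{s<r}/r^2$ against $s\,ds$, and Schur with the power weight $w(r) = r^{-\beta}$ for a suitable $0<\beta<2$ tuned to $p$ closes it for $1<p\leq\infty$.

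The only genuinely delicate point — and the one I would flag as the main obstacle — is the bookkeeping of the small-$r$ (equivalently, near-origin) behaviour, which is where hyperbolic space looks Euclidean and where the endpoint restrictions ($p>1$ in \eqref{basic inequality1}, \eqref{basic inequality2}, \eqref{basic inequality6} versus $p<\infty$ in \eqref{basic inequality3}--\eqref{basic inequality5}) actually come from: one must choose the Schur weight exponent $\theta$ or $\beta$ in the open interval dictated by $p$, and verify that the two Schur integrals both close with the \emph{same} weight. Away from the origin everything is governed by the exponential volume growth $\sinh r \sim e^r$, which only helps — the tails in \eqref{basic inequality3}--\eqref{basic inequality5} converge absolutely and the forward averages in \eqref{basic inequality1}--\eqref{basic inequality2} gain a full factor $e^{-r}$ — so the large-$r$ analysis is routine. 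I would therefore organize the write-up by first recording the elementary asymptotics $\sinh r \sim r\ (r\to 0)$, $\sinh r\sim \tfrac12 e^r\ (r\to\infty)$, $\int_0^r \sinh s\, ds = \cosh r - 1$, and $\int \sinh^{-1}s\, ds = \log\tanh(s/2)$, then dispatch \eqref{basic inequality1}--\eqref{basic inequality6} one by one, in each case exhibiting the kernel, the Schur weight, and the two weighted integral bounds, leaving the arithmetic verifications to the reader since they are all of the same elementary type.
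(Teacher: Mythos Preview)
Your proposal is correct and matches the paper's approach exactly: the paper's entire proof is the one-line remark that these inequalities are ``obtained by Schur's test easily,'' and you have supplied precisely that argument with the kernels written out, the duality pairings (1)$\leftrightarrow$(4) and (2)$\leftrightarrow$(3) implicit in your endpoint discussion, and the correct identification of the near-origin Hardy behaviour as the source of the endpoint restrictions. A couple of your intermediate asymptotics are slightly off (e.g.\ $\sinh^{-2}r\int_0^r\sinh s\,ds\sim 1/2$ near $0$, not $r/2$; and for (3)--(5) the unweighted Schur condition only closes at the $L^1$ end, so you will actually need a $p$-dependent weight or the duality with (1)--(2) rather than $w\equiv 1$), but these are bookkeeping slips that do not affect the strategy.
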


\subsection{Fourier transformation}
For $\omega \in \mathbb{S}^{d-1}$ and $\lambda$ a real number, the functions of the type
$$h_{\lambda ,\omega}:\H^d \longrightarrow \C,\mbox{  }x\rightarrow [x,b(\omega)]^{i \lambda-\frac{n-1}{2}},$$
are generalized eigenfunctions of the Laplacian-Beltrami operator. Indeed, we have
$$-\Delta _{\H^d}h_{\lambda,\omega}=(\lambda ^2+\frac{(n-1)^2}{4})h_{\lambda, \omega}.$$
Then we can define the Fourier transformation analogous to the Euclidean case. For $f\in C_c^{\infty} (\H^d)$,
\begin{equation*}
\widehat{f}(\lambda,\omega)=\int_{\H^d}f(x)h_{\lambda,\omega}(x){\rm dvol}_g,
\end{equation*}
and one has the Fourier inversion formula for function on $\H^d$
\begin{equation*}
f(x)=\int^{+\infty}_{-\infty} \int_{\mathbb{S}^{d-1}} \overline{h_{\lambda,\omega}(x)} \widehat{f}(\lambda,\omega) \frac{d \lambda d\omega}{|c(\lambda)|^2},
\end{equation*}
where $c(\lambda)$ is the Harish-Chandra coefficient,
$$c(\lambda)=C\frac{\Gamma (i\lambda)}{\Gamma(\frac{d-1}{2}+i\lambda)}.$$

For the linear Schrodinger equation on $\H^d$,
\begin{equation*}
\left\{
\begin{aligned}
& i\partial_t u+\Delta _{\H^d}u = 0,\\
& u(0) = u_0\in L^2(\H^d).
\end{aligned}
\right.
\end{equation*}
the solution can be written explicitly see \cite{Ba} as
$$u(t,x)=C\exp ^{-it(\frac{d-1}{2})^2}\int_{\H^d} u_0(y)K^d(t,d(x,y))dy,$$
where the kernel $K^d$ is, for $\rho >0$ and $d \geq 3$ odd
$$K^d(t,\rho)=\int^{+\infty}_{-\infty} e ^{-it \lambda^2} (\frac{\partial _{\rho}}{\sinh \rho})^{\frac{d-1}{2}} \cos (\lambda \rho) d \lambda.$$
and for $d\geq 2$ even,
$$K^d(t,\rho)=\int^{+\infty}_{-\infty} e ^{-it \lambda^2} \int ^{+\infty}_{\rho} \frac{\sinh s}{\sqrt{\cosh s-\cosh \rho}}(\frac{\partial _s}{\sinh s})^{\frac{d}{2}} \cos (\lambda s) ds d \lambda.$$
In particular, $d=2$,
$$K^2(t,\rho)=\frac{c}{|t|^\frac{3}{2}} \int^{+\infty}_{\rho} \frac{e ^{i \frac{s^2}{4t}} s}{\sqrt{\cosh s -\cosh \rho}}ds.$$

\section{Local well-posedness for Schr\"{o}dinger maps}
In order to prove the local well-posedness in $\mathfrak{H}^3$, we apply the approximating Scheme introduced by McGahagan \cite{Mc}. For any $\delta>0$, we introduce the wave map model equation:
\begin{equation}
\left\{
\begin{aligned}          \label{approximate equation}
&\delta^2 \nabla_t \partial_t u-J \partial_t u-\tau (u)=0,\\
&u(0,x)=u_0,\ \partial_t u(0,x)=g_0^{\delta}.
\end{aligned}
\right.
\end{equation}
where $u(t,x):[0,T]\times \H^2 \rightarrow \S^2$ and $g_0^{\delta}\in T_{u_0(x)}\S^2$. In this section we use the global coordinates (\ref{global coordinate 2}), denote $\nabla_i=\nabla_{e_i} $ for $i=1,\ 2$. For simplicity, denote $u:=u^{\delta}$. 

Before proving the Theorem \ref{main result 1}, we need the following lemma.

\begin{lemma}           \label{LWP lemma}
For $\tilde{T}>0$, there exists a constant $C>0$ independent of $\delta$, such that for any $u^{\delta}:[0,\tilde{T}]\times \H^2 \rightarrow \S^2$, $u^{\delta}\in C([0,T];\mathfrak{H}^3)$, a solution of the approximate equation, and any $1\leq k \leq 2$, the following estimate holds for $u^{\delta}$:
\begin{equation*}
\left \lVert \nabla^{k-1} \partial_t u^{\delta} \right \rVert_{C([0,T];L^2)} \leq  C \left \lVert  \partial_t u^{\delta}(0) \right \rVert_{\mathfrak{H}^{k-1}}+C \left \lVert u^{\delta} \right \rVert_{C([0,T];\mathfrak{H}^{k+1})}
\end{equation*}
for some $T>0$, depending only on the size of the solution $\left \lVert u^{\delta} \right \rVert_{C([0,\tilde{T}];\mathfrak{H}^{k+1})}$ and on the size of the initial data $ \left \lVert  \partial_t u^{\delta}(0) \right \rVert_{\mathfrak{H}^1}$.
\end{lemma}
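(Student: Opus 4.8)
The plan is to derive an energy-type differential inequality for the quantity $\left\lVert \nabla^{k-1}\partial_t u^\delta\right\rVert_{L^2}$ and close it by a bootstrap (continuity) argument on a short time interval $[0,T]$ whose length depends only on $\left\lVert u^\delta\right\rVert_{C([0,\tilde T];\mathfrak{H}^{k+1})}$ and $\left\lVert \partial_t u^\delta(0)\right\rVert_{\mathfrak{H}^1}$. Write $v:=\partial_t u^\delta$, a section of $u^{\delta,*}T\S^2$. The starting point is the approximate equation $\delta^2\nabla_t v = Jv + \tau(u)$. I would differentiate in the covariant-in-time sense and test against $\nabla_t v$ (or, for $k=2$, against the appropriate covariant spatial derivatives of $\nabla_t v$). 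Since $\langle Jv, v\rangle = 0$ because $J$ is an isometry of the tangent space, the potentially dangerous term $\delta^{-2}Jv$ contributes nothing to $\frac{d}{dt}\lVert v\rVert_{L^2}^2$ at the $k=1$ level; more generally, using $\nabla J = 0$, $\langle \nabla^{k-1}(Jv),\nabla^{k-1}v\rangle = \langle J\nabla^{k-1}v,\nabla^{k-1}v\rangle = 0$. This is the structural cancellation that makes the estimate $\delta$-independent, and it is the crux of McGahagan's scheme; I would follow \cite{Mc} here, adapting the commutator bookkeeping to the hyperbolic geometry.

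The main work is then to bound the terms coming from $\tau(u^\delta)$ and from the commutators $[\nabla_t,\nabla^{j}]$. Using the global Iwasawa-type coordinates (\ref{global coordinate 2}) with the orthonormal frame $e_\alpha = e^s\partial_{v_\alpha}$, $e_d = \partial_s$, the commutators and Christoffel-type terms are constant-coefficient (the structure constants $[e_d,e_\alpha] = e_\alpha$), so the curvature-of-the-domain contributions are bounded and produce only lower-order terms controlled by $\left\lVert u^\delta\right\rVert_{\mathfrak{H}^{k+1}}$. The curvature-of-the-target terms are of the form $R^{\S^2}(du,\partial_t u)du$ and their derivatives, and after integration by parts and Hölder these are estimated by $\left\lVert du\right\rVert$-type norms times $\left\lVert\nabla^{k-1}v\right\rVert_{L^2}$, using the Sobolev and Gagliardo–Nirenberg inequalities of Lemma \ref{Sobolev inequality} and the diamagnetic inequality (Lemma \ref{diamagnetic inequality lemma}) to handle products like $\lVert du\rVert_{L^4}$ and $\lVert\nabla du\rVert_{L^4}$. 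The upshot is a bound of the shape
\begin{equation*}
\frac{d}{dt}\left\lVert \nabla^{k-1}v\right\rVert_{L^2}^2 \lesssim P\big(\left\lVert u^\delta\right\rVert_{\mathfrak{H}^{k+1}}\big)\left\lVert \nabla^{k-1}v\right\rVert_{L^2}^2 + P\big(\left\lVert u^\delta\right\rVert_{\mathfrak{H}^{k+1}}\big),
\end{equation*}
with $P$ a polynomial and the implied constant independent of $\delta$; note the coefficient $\delta^2$ in front of $\nabla_t v$ only helps (it multiplies the time-derivative term and never appears with a negative power after the cancellation of $Jv$).

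Integrating this Grönwall-type inequality from $0$ to $t$ gives
\begin{equation*}
\left\lVert \nabla^{k-1}v(t)\right\rVert_{L^2}^2 \leq e^{C t\, P(M)}\left(\left\lVert \partial_t u^\delta(0)\right\rVert_{\mathfrak{H}^{k-1}}^2 + t\,P(M)\right),
\end{equation*}
where $M := \left\lVert u^\delta\right\rVert_{C([0,\tilde T];\mathfrak{H}^{k+1})}$, and choosing $T = T(M,\left\lVert\partial_t u^\delta(0)\right\rVert_{\mathfrak{H}^1})$ small enough (but independent of $\delta$) converts this into the stated linear bound $\left\lVert\nabla^{k-1}\partial_t u^\delta\right\rVert_{C([0,T];L^2)} \le C\left\lVert\partial_t u^\delta(0)\right\rVert_{\mathfrak{H}^{k-1}} + C\left\lVert u^\delta\right\rVert_{C([0,T];\mathfrak{H}^{k+1})}$. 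The $k=2$ case should be treated after $k=1$, since the $k=1$ bound on $\lVert v\rVert_{L^2}$ feeds into the estimate of the lower-order error terms appearing at the $k=2$ level. The main obstacle I anticipate is the careful commutator accounting at $k=2$: one must commute $\nabla_t$ past two spatial covariant derivatives, producing terms involving $\nabla(R^{\S^2}(du,v)du)$ and mixed terms like $\nabla^2 u \cdot \nabla v$, and show that every such term is either absorbed into $\left\lVert\nabla v\right\rVert_{L^2}^2$ with a coefficient $P(M)$ or is bounded purely by $P(M)$ — this is where the equivalence $\left\lVert u\right\rVert_{\mathfrak{H}^k}\sim\left\lVert u\right\rVert_{\mathcal{H}^k}$ (Lemma \ref{equivalent of Sobolev lemma}) and the interpolation inequalities are used repeatedly, and where one must be scrupulous that no hidden factor of $\delta^{-1}$ survives.
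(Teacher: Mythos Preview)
Your proposal has a genuine gap in the structural cancellation that makes the estimate $\delta$-independent. You write that ``$\langle Jv, v\rangle = 0$ \dots\ the potentially dangerous term $\delta^{-2}Jv$ contributes nothing to $\frac{d}{dt}\lVert v\rVert_{L^2}^2$,'' but if you compute $\frac{d}{dt}\lVert v\rVert_{L^2}^2 = 2\int\langle v,\nabla_t v\rangle$ and substitute $\nabla_t v = \delta^{-2}(Jv+\tau(u))$ from the equation, the $Jv$ part indeed drops out, yet you are left with $2\delta^{-2}\int\langle v,\tau(u)\rangle$. The tension field term still carries the factor $\delta^{-2}$, so your claim that ``$\delta$ never appears with a negative power after the cancellation of $Jv$'' is false for this multiplier. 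Testing against $\nabla_t v$ directly does not help either: it produces $\delta^2\lVert\nabla_t v\rVert^2$ on the left, which controls the wrong quantity.

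The paper (following McGahagan) instead tests the equation against $J\nabla_t\partial_t u$. The point is that $\langle \delta^2\nabla_t v,\ J\nabla_t v\rangle = 0$ --- \emph{this} is the cancellation that removes $\delta$ entirely --- while $\langle -Jv,\ J\nabla_t v\rangle = -\langle v,\nabla_t v\rangle = -\tfrac{1}{2}\tfrac{d}{dt}\lVert v\rVert_{L^2}^2$. The remaining term $\langle J\tau(u),\nabla_t v\rangle$ is not estimated directly; one writes it as $\tfrac{d}{dt}\int\langle J\tau(u),v\rangle - \int\langle J\nabla_t\tau(u),v\rangle$, integrates in time, absorbs the boundary term $\int\langle J\tau(u),v\rangle(t)$ into the left side by Cauchy--Schwarz, and reduces $\int\langle J\nabla_t\tau(u),v\rangle$ to the curvature commutator $\int\langle J[\nabla_t,\nabla_i]e_iu,\ v\rangle$ via spatial integration by parts. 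This yields the Gr\"onwall structure you describe, but now genuinely $\delta$-free. For $k=2$ the same trick is applied after taking one spatial covariant derivative: one tests $\nabla_i(\text{equation})$ against $J\nabla_i\nabla_t v$. The rest of your outline (commutator bookkeeping in the Iwasawa frame, Sobolev/diamagnetic inequalities, feeding the $k=1$ bound into $k=2$) is in the right spirit once the correct multiplier is in place.
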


\begin{proof}
For $k=1$, we take the inner product of the above wave map equation with $J(u)\nabla_t \partial_t u$, the first term will disappear by orthogonality, we get
\begin{eqnarray}                         \label{partial t u 1}                  
\int_{\H^2} \< \partial_t u, \nabla_t \partial_t u \> & =& \int_{\H^2} \< J\tau (u), \nabla_t \partial_t u \> , \\      \label{partial t u 2}
&=& \frac{d}{dt} \int_{\H^2} \< J\tau (u),\partial_t u \> -\int_{\H^2} \< J\nabla_t \tau(u),\partial_t u \>
\end{eqnarray}
In the system of coordinate, $\tau (u)$ can be written as $\tau(u)=\nabla_i e_i (u)-(\nabla_i e_i)(u)$, then commute $\nabla_i$ and $\nabla_t$, by integration by parts, the second term of (\ref{partial t u 2}) becomes
\begin{eqnarray*}
\int_{\H^2} \< J\nabla_t \tau(u),\partial_t u \> &=&  \int_{\H^2} \< J\nabla_t(\nabla_i e_i (u)-(\nabla_i e_i)(u)), \partial_t u \> , \\
&=& \int_{\H^2} \< J[\nabla_t,\nabla_i  ]e_i u +J\nabla_i \nabla_t e_i u -J\nabla_t e_2 u , \partial_t u  \> ,\\
&=& \int_{\H^2} \<  J[\nabla_t,\nabla_i  ]e_i u, \partial_t u \>  +   \int_{\H^2} e_i \< J\nabla_i \partial_t u ,\partial_t u \>  \\
&& - \int_{\H^2} \< J\nabla_i  \partial_t u, \nabla_i  \partial_t u \> - \int_{\H^2} \< J\nabla_2 \partial_t u, \partial_t u \>,\\
&= &  \int_{\H^2} \<  J[\nabla_t,\nabla_i  ]e_i u, \partial_t u \>  + \int_{\H^2}  \< J\nabla_2 \partial_t u ,\partial_t u \>  - \int_{\H^2} \< J\nabla_2  \partial_t u, \partial_t u \>,\\
&=& \int_{\H^2} \<  J[\nabla_t,\nabla_i  ]e_i u, \partial_t u \> .
\end{eqnarray*}
If we integrate in time,by H\"{o}lder inequality we find that (\ref{partial t u 1}) becomes
\begin{eqnarray*}
\frac{1}{2} \left \lVert \partial_t u (t)\right \rVert_{L^2}^2 &=& \frac{1}{2} \left \lVert \partial_t u (0)\right \rVert_{L^2}^2+ \int_{\H^2} \< J\tau (u),\partial_t u  \> (t)- \int_{\H^2} \< J\tau (u),\partial_t u  \> (0) \\
& & -\int_0^t \int_{\H^2} \< J[\nabla_t, \nabla_i ]e_i u, \partial_t u \>,\\
& \leq & \frac{1}{2} \left \lVert \partial_t u (0)\right \rVert_{L^2}^2+  \left \lVert \tau (u)(t)\right \rVert_{L^2} \left \lVert \partial_t u (t)\right \rVert_{L^2}  + \left \lVert \tau (u)(0)\right \rVert_{L^2} \left \lVert \partial_t u (0)\right \rVert_{L^2}   \\
&&  \int_0^t  \left \lVert\partial_t u \right \rVert_{L^2}^2 \left \lVert d( u)\right \rVert_{L^{\infty}}^2ds,\\
  & \leq & C\left \lVert \partial_t u (0)\right \rVert_{L^2}^2 +C \left \lVert \tau( u )(t)\right \rVert_{L^2}^2 +C \left \lVert \tau( u )(0)\right \rVert_{L^2}^2 \\
  && + \int_0^t  \left \lVert\partial_t u \right \rVert_{L^2}^2 \left \lVert d( u)\right \rVert_{L^{\infty}}^2ds,
\end{eqnarray*}
then
\begin{eqnarray*}
\left \lVert \partial_t u (t)\right \rVert_{L^2}^2 \lesssim \left \lVert \partial_t u (0)\right \rVert_{L^2}^2+ \left \lVert \tau( u )\right \rVert_{C([0,T];L^2)}^2 +\int_0^t  \left \lVert \partial_t u \right \rVert_{L^2}^2 \left \lVert \partial u\right \rVert_{C([0,T];H^2)}^2ds.
\end{eqnarray*}
Therefore, by Gronwall inequality, choose $T$ such that $\left\lVert du\right\rVert_{C(0,\tilde{T};H^2)}T$ small, we have
\begin{eqnarray}               \label{estimate of partial t u}
\left \lVert \partial_t u (t)\right \rVert_{L^2}^2 \lesssim (\left \lVert \partial_t u (0)\right \rVert_{L^2}^2+ \left \lVert \tau( u )\right \rVert_{C([0,T];L^2)}^2) (1+\left \lVert \partial u\right \rVert_{C([0,T];H^2)}^2).
\end{eqnarray}
For $k=2$, we take $\nabla_i $ on the approximate equation (\ref{approximate equation}):
\begin{equation*}
\delta^2 \nabla_i  \nabla_t \partial_t u- J\nabla_i  \partial_t u -\nabla_i \tau (u)=0,
\end{equation*}
then we take the inner product of the above equation with $J(u)\nabla_i  \nabla_t \partial_t u$ and commute $\nabla_i$ and $\nabla_t$, we have
\begin{equation}                   \label{k=2 inner product}
\begin{aligned}
0= & \int_{\H^2} \< J\nabla_i  \partial_t u,J\nabla_i \nabla_t \partial_t u \> {\rm dvol}_g- \int_{\H^2} \< J \nabla_i  \tau, \nabla_i \nabla_t \partial_t u \>{\rm dvol}_g,\\
= & \frac{1}{2}\frac{d}{dt}\left\lVert \nabla_i\partial_t u\right\rVert_{L^2}^2+\int_{\H^2}\< \nabla_i\partial_t u,[\nabla_i,\nabla_t]\partial_t u \>{\rm dvol}_g-\frac{d}{dt}\int_{\H^2}\<J\nabla_i \tau,\nabla_i\partial_t u\>{\rm dvol}_g\\
& +\int_{\H^2}\< J\nabla_t\nabla_i \tau,\nabla_i \partial_t u \>{\rm dvol}_g-\int_{\H^2}\<J\nabla_i \tau,[\nabla_i,\nabla_t]\partial_t u\>{\rm dvol}_g.
\end{aligned}
\end{equation}
Denote
\begin{equation}           \label{denote I II III}
\begin{aligned}
&{\rm I}=\int_{\H^2}\< \nabla_i\partial_t u,[\nabla_i,\nabla_t]\partial_t u \>{\rm dvol}_g,\ {\rm II}=\int_{\H^2}\< J\nabla_t\nabla_i \tau,\nabla_i \partial_t u \>{\rm dvol}_g,\\
&{\rm III}=\int_{\H^2}\<J\nabla_i \tau,[\nabla_i,\nabla_t]\partial_t u\>{\rm dvol}_g.
\end{aligned}
\end{equation}
Then ${\rm II}$ can be rewritten as
\begin{align*}
{\rm II}=&\int \<J[\nabla_t,\nabla_i]\tau, \nabla_i\partial_t u\>+e_i\< J\nabla_t\tau,\nabla_i\partial_t u \>-\<J\nabla_t\tau,\nabla_i\nabla_i\partial_t u\>{\rm dvol}_g,\\
\triangleq & {\rm II}_1+{\rm II}_2+{\rm II}_3,
\end{align*}
by the representation of $\tau(u)$ and $(\nabla_j e_j)u=e_2 u$, ${\rm II}_2$ becomes
\begin{align*}
{\rm II}_2=& -\int_{\H^2} e_2 \< \nabla_t \tau ,J\nabla_i \partial_t u \>{\rm dvol}_g, \\
=& -\int_{\H^2} \< \nabla_t (\nabla_{e_j}e_j u-(\nabla_{e_j}e_j)u ),J\nabla_t e_2 u \>{\rm dvol}_g,\\
=& -\int_{\H^2} \< \nabla_j\nabla_t e_j u,J\nabla_t e_2 u \>+ \< [\nabla_t, \nabla_j]e_j u,J\nabla_t e_2 u \> {\rm dvol}_g,
\end{align*}
for ${\rm II}_3$, by integration by parts, we have
\begin{align*}
{\rm II}_3= &- \int_{\H^2} \< J[\nabla_t,\nabla_j]e_j u,\nabla_i\nabla_i \partial_t u \>+ \< J\nabla_j\nabla_j \partial_t u, \nabla_i\nabla_i \partial_t u \> -\< J\nabla_t e_2 u,\nabla_i\nabla_i \partial_t u \>{\rm dvol}_g,\\
=&- \int_{\H^2} e_i\< J[\nabla_t, \nabla_j]e_j u,\nabla_i \partial_t u \> -\< J\nabla_i[\nabla_t,\nabla_j]e_j u,\nabla_i \partial_t u \>-\< J\nabla_t e_2 u,\nabla_i\nabla_i \partial_t u \>{\rm dvol}_g,\\
=& \int_{\H^2}\< [\nabla_t, \nabla_j]e_j u ,J \nabla_t e_2 u \> +\< J\nabla_i[\nabla_t,\nabla_j]e_j u,\nabla_i \partial_t u \>
+\< J\nabla_t e_2 u,\nabla_i\nabla_i \partial_t u \>{\rm dvol}_g.
\end{align*}
Hence,
\begin{equation*}
{\rm II}=\int_{\H^2} \<J[\nabla_t,\nabla_i] \tau, \nabla_i\partial_t u \> +\< J\nabla_i[\nabla_t ,\nabla_j]e_j u,\nabla_i \partial_t u \>{\rm dvol}_g.
\end{equation*}
Then (\ref{k=2 inner product}) can be written as
\begin{equation*}
\frac{1}{2}\frac{d}{dt} \left \lVert \nabla\partial_t u \right \rVert_{L^2}^2=\frac{d}{dt}\int_{\H^2}\< J\nabla_i \tau, \nabla_i \partial_t u \> -{\rm I}-{\rm II}+{\rm III}.
\end{equation*}
Integrating in time, by H\"{o}lder inequality, it gives
\begin{align*}
\frac{1}{2}\left \lVert \nabla \partial_t u(t) \right \rVert_{L^2}^2  \leq & \frac{1}{2}\left \lVert \nabla\partial_t u(0) \right \rVert_{L^2}^2+\left \lVert \nabla\tau (t) \right \rVert_{L^2}\left \lVert \nabla\partial_t u(t) \right \rVert_{L^2}^2\\
 &  +\left \lVert \nabla\tau (0) \right \rVert_{L^2}\left \lVert \nabla\partial_t u(0) \right \rVert_{L^2}^2+\int_0^t -{\rm I}-{\rm II}+{\rm III}\ ds,
\end{align*}
then
\begin{align*}
\left \lVert \nabla \partial_t u(t) \right \rVert_{L^2}^2  \lesssim \left \lVert \nabla\partial_t u(0) \right \rVert_{L^2}^2+\left \lVert \nabla\tau (t) \right \rVert_{L^2}^2+\left \lVert \nabla\tau (0) \right \rVert_{L^2}^2+\int_0^t |-{\rm I}-{\rm II}+{\rm III}| ds.
\end{align*}
From (\ref{denote I II III}), H\"{o}lder inequality and Lemma \ref{Sobolev inequality}, we have
\begin{align*}
\big|-{\rm I}-{\rm II}+{\rm III}\big|  \leq  & \left \lVert \nabla \partial_t u \right \rVert_{L^2} \Big( \left \lVert [\nabla_t, \nabla_i] \tau \right \rVert_{L^2} +\left \lVert \nabla_i [\nabla_t,\nabla_j]e_j u \right \rVert_{L^2} \Big) \\
 & + \Big ( \left \lVert \partial u \right \rVert_{H^2} +  \left \lVert \nabla \partial_t u \right \rVert_{L^2} \Big) \left \lVert [\nabla,\nabla_t] \partial_t u \right \rVert_{L^2},\\
\leq & \left \lVert \nabla \partial_t u \right \rVert_{L^2} \Big( \left \lVert  \partial_t u \right \rVert_{L^4} \left \lVert \tau(u) \right \rVert_{L^4} \left \lVert du \right \rVert_{L^{\infty}} +  \left \lVert du \right \rVert_{L^{\infty}}^3  \left \lVert \partial_t u \right \rVert_{L^2}  \left \lVert du \right \rVert_{L^{\infty}}^2  \left \lVert \nabla\partial_t u \right \rVert_{L^2}\\
 & + \left \lVert \partial_t u \right \rVert_{L^4} \left \lVert du \right \rVert_{L^{\infty}}  \left \lVert \nabla^2 u \right \rVert_{L^4} \Big) + \Big ( \left \lVert \partial u \right \rVert_{H^2} +  \left \lVert \nabla \partial_t u \right \rVert_{L^2} \Big) \left \lVert \partial_t u \right \rVert_{L^4}^2 \left \lVert \partial u \right \rVert_{H^2},\\
\leq & \left \lVert \nabla \partial_t u \right \rVert_{L^2}   (\left \lVert \nabla \partial_t u \right \rVert_{L^2} +\left \lVert  \partial_t u \right \rVert_{L^2})( \left \lVert  \partial u \right \rVert_{H^2}^2 +\left \lVert  \partial u \right \rVert_{H^2}^3 )  + \left \lVert \nabla \partial_t u \right \rVert_{L^2}^2 \left \lVert  \partial_t u \right \rVert_{L^2}\left \lVert \partial u \right \rVert_{H^2},\\
\leq & \left \lVert \nabla \partial_t u \right \rVert_{L^2}^2  ( \left \lVert \partial_t u \right \rVert_{L^2} \left \lVert  \partial u \right \rVert_{H^2}+ \left \lVert  \partial u \right \rVert_{H^2}^2 +\left \lVert  \partial u \right \rVert_{H^2}^3 ) + \left \lVert \partial_t u \right \rVert_{L^2}^2 (\left \lVert  \partial u \right \rVert_{H^2}^2 +\left \lVert  \partial u \right \rVert_{H^2}^3).
\end{align*}
By (\ref{estimate of partial t u}),
\begin{eqnarray*}
\big|-{\rm I}-{\rm II}+{\rm III}\big| &\leq & \left \lVert \nabla \partial_t u \right \rVert_{L^2}^2 (\left \lVert  \partial_t u(0) \right \rVert_{L^2}+\left \lVert \partial u \right \rVert_{C(H^2)}  )(\left \lVert \partial u \right \rVert_{C(H^2)}+\left \lVert \partial u \right \rVert_{C(H^2)}^2 )\\
& & +  (\left \lVert  \partial_t u(0) \right \rVert_{L^2}^2+\left \lVert \partial u \right \rVert_{C(H^2)}^2  ) P_5(\left \lVert \partial u \right \rVert_{C(H^2)}).
\end{eqnarray*}
By Gronwall inequality, we get
\begin{eqnarray*}
\left \lVert \nabla\partial_t u \right \rVert_{L^2} \lesssim P(\left \lVert g_0 \right \rVert_{H^1})Q(\left \lVert u \right \rVert_{C(\mathfrak{H}^3)}).
\end{eqnarray*}
\end{proof}

\begin{proof}[Proof of Theorem \ref{main result 1}]
We choose data $g_0^{\delta}$ such that $\left\lVert g_0^{\delta}\right\rVert_{H^1}<C$ and $\delta^2\left\lVert g_0^{\delta}\right\rVert_{H^2}^2<C$. Without any restriction we make the bootstrap assumption
\begin{equation}       \label{bootstrap assumption}
\left\lVert u\right\rVert_{C([0,T];\mathfrak{H}^2)}+\left\lVert \nabla\tau(u)\right\rVert_{C([0,T];L^2)}\leq 2C(\left\lVert u(0)\right\rVert_{\mathfrak{H}^2},\left\lVert \nabla\tau(u(0))\right\rVert_{L^2}).
\end{equation}
Define the energy functional by
\begin{equation*}
E_1(u,\partial_t u)=\frac{1}{2} \left \lVert d u \right \rVert_{L^2}^2+\frac{\delta^2}{2} \left \lVert \partial_t u \right \rVert_{L^2}^2 ,
\end{equation*}
then by (\ref{approximate equation}), we have $\frac{d}{dt}E_1 =0$. Define the second order energy functional by
\begin{equation*}
E_2(u,\partial_t u)=\frac{1}{2} \left \lVert \nabla du \right \rVert_{L^2}^2+\frac{\delta^2}{2} \left \lVert \nabla\partial_t u \right \rVert_{L^2}^2 ,
\end{equation*}
by (\ref{approximate equation}) we have
\begin{align}             \nonumber     
\frac{d}{dt}E_2=&\frac{1}{2}\frac{d}{dt}\left \lVert \nabla du \right \rVert_{L^2}^2+\delta^2 \int_{\H^2}\< \nabla_i \nabla_t\partial_t u, \nabla_i\partial_t u \>  +\< [\nabla_t,\nabla_i]\partial_t u,\nabla_i\partial_t u \>{\rm dvol}_g\\      \label{dt E2}
=&\frac{1}{2}\frac{d}{dt}\left \lVert \nabla du \right \rVert_{L^2}^2+\int_{\H^2}\< J\nabla_i \partial_t u+\nabla_i \tau(u),\nabla_i \partial_t u \>+\delta^2\< [\nabla_t,\nabla_i]\partial_t u,\nabla_i\partial_t u \>{\rm dvol}_g,
\end{align}
by integration by parts and $\<JX,X\>=0$, the second term of (\ref{dt E2}) becomes
\begin{align}         \nonumber
&\int_{\H^2} \< \nabla_i \tau(u),\nabla_i \partial_t u \>{\rm dvol}_g\\ \nonumber
=& \int_{\H^2} \<\nabla_i(\nabla_j e_j u-(\nabla_j e_j)u),\nabla_i \partial_t u\>{\rm dvol}_g,\\          \label{dt E2  second term}
=& \int_{\H^2} \< [\nabla_i,\nabla_j]e_j u,\nabla_i \partial_t u \>-\< \nabla_i e_j u,[\nabla_j,\nabla_t]e_i u \>-\< \nabla_i e_j u, \nabla_t \nabla_j e_i u \>{\rm dvol}_g, 
\end{align}
furthermore, the last term of (\ref{dt E2  second term}) becomes
\begin{align*}
&\int_{\H^2}-\< \nabla_i e_j u, \nabla_t \nabla_j e_i u \>{\rm dvol}_g\\
=& \int_{\H^2} -\< \nabla_i e_j u ,\nabla_t [\nabla_j,\nabla_i]u \> -\< \nabla_i \nabla_j u-(\nabla_i e_j)u,\nabla_t(\nabla_i e_j u-(\nabla_i e_j)u) \>\\
& -\< \nabla_ie_j u,\nabla_t (\nabla_i e_j)u \>-\< (\nabla_i e_j)u,\nabla_t \nabla_ie_j u \>+\< \nabla_i e_ju,\nabla_t(\nabla_ie_j)u\>{\rm dvol}_g\\
=& -\frac{1}{2}\frac{d}{dt} \left\lVert \nabla du\right\rVert_{L^2}^2+\int_{\H^2}-\< \nabla_i e_j u,\nabla_t[\nabla_j,\nabla_i]u+\nabla_t(\nabla_i e_j)u \> -\< (\nabla_i e_j)u,[\nabla_t,\nabla_i]e_j u \>\\
&-\< (\nabla_2e_j)u,\nabla_t e_j u \>+\< \nabla_i(\nabla_ie_j)u,\nabla_t e_j u \>+\< \nabla_ie_j u,\nabla_t(\nabla_ie_j)u \>{\rm dvol}_g.
\end{align*}
Hence, by (\ref{Sobolev inequality 4}) and H\"{o}lder inequality we have
\begin{align*}
\frac{d}{dt}E_2\lesssim (\left \lVert  \partial_t u \right \rVert_{L^2}+\left \lVert \nabla \partial_t u \right \rVert_{L^2})P(\left \lVert  u \right \rVert_{\mathfrak{H}^3}) +\delta^2 \left \lVert \nabla \partial_t u \right \rVert_{L^2}^2\left \lVert  \partial_t u \right \rVert_{L^2} \left \lVert u \right \rVert_{\mathfrak{H}^3}.
\end{align*}
Define the third order energy functional by
\begin{equation*}
E_3 =\frac{1}{2} \left \lVert \nabla\tau(u) \right \rVert_{L^2}^2+ \frac{\delta^2}{2} \left \lVert \nabla^2 \partial_t u \right \rVert_{L^2}^2.
\end{equation*}
By integration by parts gives
\begin{align}     \nonumber
&\frac{d}{dt}E_3\\                       \label{E3 with delta}
\lesssim & \int_{\H^2} \delta^2 \big( |\partial_t u||du| |\nabla \partial_t u|+|du|^2 |\partial_t u|^2 +|\partial_t u|^2 |\nabla^2 u|+|du||\partial_t u|^2 \big) (|\nabla^2 \partial_t u|+|\nabla \partial_t u|){\rm dvol}_g\\          \label{E3 without delta}
 & +\int_{\H^2}|du|^2 |\tau(u)||\nabla\partial_t u| +|\nabla \tau(u)| (|\partial_t u| |du|^2 +|\nabla\partial_t u|) \\\nonumber
&  +|\nabla \tau(u)|( |du|^3 |\partial_t u| + |du|^2 |\nabla\partial_t u| +|\nabla^2 u| |du| |\partial_t u|) {\rm dvol}_g
\end{align}
By (\ref{Sobolev inequality 2}) and H\"{o}lder inequality, we have
\begin{align*}
(\ref{E3 with delta}) \lesssim & \delta^2 (\left \lVert \nabla^2 \partial_t u \right \rVert_{L^2}+\left \lVert \nabla \partial_t u \right \rVert_{L^2})(\left \lVert  \partial_t u \right \rVert_{L^4} \left \lVert \nabla \partial_t u \right \rVert_{L^4} \left \lVert \partial u \right \rVert_{H^2} + \left \lVert  \partial_t u \right \rVert_{L^4}^2 \left \lVert  \partial u \right \rVert_{H^2}^2 \\
&+\left \lVert  \partial_t u \right \rVert_{L^8}^2\left \lVert \nabla^2 u \right \rVert_{L^4}+\left \lVert \partial_t u \right \rVert_{L^4}^2 \left \lVert  \partial u \right \rVert_{H^2})\\
\lesssim & \delta^2 (\left \lVert \nabla^2 \partial_t u \right \rVert_{L^2}+\left \lVert \nabla \partial_t u \right \rVert_{L^2}) ( \left \lVert \nabla^2 \partial_t u \right \rVert_{L^2}^{1/2}\left \lVert \nabla \partial_t u \right \rVert_{L^2}\left \lVert  \partial_t u \right \rVert_{L^2}^{1/2}\left\lVert u \right\rVert_{\mathfrak{H}^3} \\
&+\left \lVert \nabla \partial_t u \right \rVert_{L^2}\left \lVert  \partial_t u \right \rVert_{L^2}\left \lVert  \partial u \right \rVert_{H^2}^2 +\left \lVert \nabla \partial_t u \right \rVert_{L^2}^{3/2}\left \lVert  \partial_t u \right \rVert_{L^2}^{1/2}\left \lVert \partial u \right \rVert_{H^2}\\
&+\left \lVert \nabla \partial_t u \right \rVert_{L^2}\left \lVert  \partial_t u \right \rVert_{L^2}\left \lVert  \partial u \right \rVert_{H^2} ),\\
\lesssim & \delta^2 \left \lVert  \partial_t u \right \rVert_{H^2}^2 \left \lVert \nabla \partial_t u \right \rVert_{L^2} P(\left \lVert  u \right \rVert_{\mathfrak{H}^3}).
\end{align*}
Similarly, we have
\begin{align*}
(\ref{E3 without delta}) \lesssim & (\left \lVert \nabla \partial_t u \right \rVert_{L^2}+ \left \lVert  \partial_t u \right \rVert_{L^2})P(\left \lVert  u \right \rVert_{\mathfrak{H}^3}).
\end{align*}
Hence,
\begin{equation}    \label{dt E123}
\begin{aligned}
&\frac{d}{dt}(E_1+E_2+E_3)\\
\leq & C\delta^2 \left\lVert \partial_t u\right\rVert_{H^2}^2\left\lVert \nabla \partial_t u\right\rVert_{L^2}P(\left \lVert  u \right \rVert_{\mathfrak{H}^3})+ C (\left\lVert \nabla \partial_t u\right\rVert_{L^2}+\left\lVert  \partial_t u\right\rVert_{L^2})P(\left \lVert  u \right \rVert_{\mathfrak{H}^3}).
\end{aligned}
\end{equation}
Since we have by integration by parts
\begin{equation*}
\left\lVert \nabla^2 du\right\rVert_{L^2}^2\lesssim \left\lVert \nabla \tau(u)\right\rVert_{L^2}^2+\left\lVert du\right\rVert_{L^6}^6+\left\lVert \nabla du\right\rVert_{L^4}^2\left\lVert du\right\rVert_{L^4}^2+\left\lVert \nabla^2 du\right\rVert_{L^2}^2,
\end{equation*}
then by (\ref{Sobolev inequality 2}) we obtain
\begin{equation}       \label{relation of nabla2 du and nabla tau u}
\left\lVert \nabla^2 du\right\rVert_{L^2}^2\lesssim P(\left\lVert u\right\rVert_{\mathfrak{H}^2}^2)+\left\lVert \nabla\tau(u)\right\rVert_{L^2}^2.
\end{equation}
Thus, integrating (\ref{dt E123}) in time and taking the supremum over $t\in [0,T]$ , we have
\begin{equation}          \label{dt E123 integrating in time}
\begin{aligned}
& \delta^2 \left \lVert \partial_t u \right \rVert_{C(H^2)}^2+\left \lVert   u \right \rVert_{C(H^2)}^2+\left \lVert \nabla\tau(u) \right \rVert_{C(L^2)}^2\\
\leq & \delta^2 \left \lVert g(0) \right \rVert_{H^2}^2+\left \lVert   u(0) \right \rVert_{H^2}^2+\left \lVert \nabla\tau(u(0)) \right \rVert_{L^2}^2\\
& +CT\delta^2 \left \lVert \partial_t u \right \rVert_{C(H^2)}^2 P(\left \lVert u \right \rVert_{C(\mathfrak{H}^2)}+\left \lVert\nabla\tau( u) \right \rVert_{C(L^2)})+CTQ(\left \lVert u \right \rVert_{C(\mathfrak{H}^2)}+\left \lVert\nabla\tau( u) \right \rVert_{C(L^2)}).
\end{aligned}
\end{equation}
Choosing $T$ small such that $CTP(\left \lVert u \right \rVert_{C([0,\tilde{T}];\mathfrak{H}^2)}+\left \lVert\nabla\tau( u) \right \rVert_{C([0,\tilde{T}];L^2)})<\frac{1}{2},$ from (\ref{dt E123 integrating in time}) we have
\begin{equation}\label{dt E123  case 1}
\begin{aligned}        
&\left \lVert   u \right \rVert_{C(H^2)}^2+\left \lVert  \nabla \tau( u) \right \rVert_{C(L^2)}^2+\frac{\delta^2}{2}\left \lVert  \partial_t u \right \rVert_{C(H^2)}^2-\delta^2\left \lVert g_0 \right \rVert_{H^2}^2\\
\leq & \left \lVert   u(0)\right \rVert_{\mathfrak{H}^2}^2+\left \lVert  \nabla \tau( u(0)) \right \rVert_{L^2}^2+CTQ(\left \lVert u \right \rVert_{C(\mathfrak{H}^2)}+\left \lVert\nabla\tau( u) \right \rVert_{C(L^2)}).
\end{aligned}
\end{equation}
If $\left \lVert  \partial_t u \right \rVert_{C(H^2)}^2\geq 2\left \lVert g_0 \right \rVert_{H^2}^2$, (\ref{dt E123  case 1}) implies 
\begin{equation*}
\left \lVert   u \right \rVert_{C(H^2)}^2+\left \lVert  \nabla \tau( u) \right \rVert_{C(L^2)}^2\leq \left \lVert   u(0)\right \rVert_{\mathfrak{H}^2}^2+\left \lVert  \nabla \tau( u(0)) \right \rVert_{L^2}^2+CTQ(\left \lVert u \right \rVert_{C(\mathfrak{H}^2)}+\left \lVert\nabla\tau( u) \right \rVert_{C(L^2)}).
\end{equation*}
If $\left \lVert  \partial_t u \right \rVert_{C(H^2)}^2< 2\left \lVert g_0 \right \rVert_{H^2}^2$, from (\ref{dt E123 integrating in time}) we obtain
\begin{equation*}
\left \lVert   u \right \rVert_{C(H^2)}^2+\left \lVert  \nabla \tau( u) \right \rVert_{C(L^2)}^2\leq C+\left \lVert   u(0)\right \rVert_{\mathfrak{H}^2}^2+\left \lVert  \nabla \tau( u(0)) \right \rVert_{L^2}^2+3CTQ(\left \lVert u \right \rVert_{C(\mathfrak{H}^2)}+\left \lVert\nabla\tau( u) \right \rVert_{C(L^2)}).
\end{equation*}
Hence, by the bootstrap assumption (\ref{bootstrap assumption}), there exists $T$ small such that
\begin{equation*}
\left \lVert   u \right \rVert_{C(H^2)}+\left \lVert  \nabla \tau( u) \right \rVert_{C(L^2)}< \frac{3}{2}C.
\end{equation*}
Therefore, by (\ref{relation of nabla2 du and nabla tau u}) we have
\begin{equation*}
\left \lVert   u \right \rVert_{C(\mathfrak{H}^3)}\leq C(\left \lVert   u(0) \right \rVert_{\mathfrak{H}^3})
\end{equation*}
for some fixed $T>0$ depending only on the size of data $u(0)$.

\end{proof}

\section{The Coulomb gauge representation of the equation}
In this section, we rewrite the equivariant Schr\"{o}dinger map in the Coulomb gauge, then obtain the $(\psi^+,\ \psi^-)$-system of coupled Schr\"{o}dinger equations. Conversely, we can recover the map $u$ from $\psi^+$ or $\psi^-$ at fixed time.

We choose $v\in T_u \H^2$ such that $v\cdot v=1$ and define $w=u\times v$. Thus
$$w\cdot w=1,\ \ \  w\cdot u=w\cdot v=0, \ \ \    w\times u=v,  \ \ \   v\times w=u .$$

Since $u$ is 1-equivariant it is natural to work with 1-equivariant frame, that is
$$v=e^{\theta R} \bar{v}(r),\mbox{ }\mbox{ }w=e^{\theta R} \bar{w}(r),$$
where $\bar{v}$, $\bar{w}$ are unit symmetric vectors in $\H^2$. On one hand in such a frame we obtain the differentiated fields $\psi_k$ and the connection coefficients $A_k$, by
$$\psi_k= \partial_k u \cdot v+i\partial_k u\cdot w,\ \ A_k= \partial_k v \cdot w.$$
On the other hand, given $\psi_k$ and $A_k$ we can return to the frame $(u,v,w)$ via the ODE system:
\begin{eqnarray}
\left\{
\begin{array}{lll}           \label{system of u,v,w}
 \partial_k u &=& (\Re \psi_k)v +(\Im \psi_k)w,\\
 \partial_k v &=& -(\Re \psi_k)u +A_k w,\\
 \partial_k w &=& -(\Im \psi_k)u -A_k v.
\end{array}
\right.
\end{eqnarray}
If we introduce the covariant differentiation
\begin{equation*}
D_k=\partial_k+iA_k, \mbox{ }\mbox{ }k\in \{0,\ 1,\ 2\},
\end{equation*}
then the compatibility conditions are imposed
\begin{equation} \label{compatibility conditions}
D_k \psi_l=D_l \psi_k,\mbox{ }\mbox{ } l,k\in\{0,\ 1,\ 2\}.
\end{equation}
Moreover, the curvature of this connection is given by
\begin{equation} \label{curvature}
D_k D_l-D_l D_k=i(\partial_k A_l- \partial_l A_k)=i\Im(\psi_k \bar{\psi_l}).
\end{equation}
An important geometric feature is that $\psi_2$, $A_2$ are closely related to the original map. Precisely, for $A_2$ we have
\begin{equation*}
A_2=(-v_2,v_1,0)\cdot  (w_1,w_2,w_3)=u_3,
\end{equation*}
and
\begin{equation*}
\psi_2= w_3-iv_3.
\end{equation*}
Hence we obtain $|\psi_2|^2=u_1^2+u_2^2$, and the following important conservation law
\begin{equation*}
A_2^2+|\psi_2|^2=1.
\end{equation*}

We now turn to choose the orthonormal frame $(\bar{v},\bar{w})$ on $\S^2$. For the equivariant Schr\"{o}dinger map, we use the Coulomb gauge ${\rm div}A=0$, namely, in the polar coordinate, $\partial_r A_1+\frac{\partial_{\theta}^2}{\sinh^2 r} A_2=0$. Since $A_2=u_3$ is radial, we can choose $A_1=0$, i.e
\begin{equation*}
\partial_r \bar{v} \cdot  \bar{w}=0,
\end{equation*}
which can be represented as ODE
\begin{eqnarray}                    \label{ODE of bar(v)}
\partial_r \bar{v}=-(\bar{v}\cdot  \partial_r \bar{u}) \bar{u}.
\end{eqnarray}
Then for matrix $U=(\bar{u},\bar{v},\bar{w})$, we have
\begin{equation*}
\partial_r U=M\cdot  U,
\end{equation*}
where $M=-u\cdot \partial_r u^{T}+ \partial_r u\cdot u^T$ is an antisymmetric matrix.

The ODE (\ref{ODE of bar(v)}) need to be initialized at some point. To avoid introducing a constant time-dependent potential into the equation via $A_0$, we need to choose this initialization uniformly with respect to $t$. Since we restrict the data $\lim\limits_{r \rightarrow \infty}\bar{u}(r,t)=\vec{k}$ for any t, we can fix the choice of $\bar{v}$ and $\bar{w}$ at infinity,
\begin{equation}                    \label{initial data of bar v}
\lim\limits_{r \rightarrow \infty}\bar{v}(r,t)=\vec{i},\ \ \ \ \ \lim\limits_{r \rightarrow \infty}\bar{w}(r,t)=\vec{j}.
\end{equation}
The existence and uniqueness of (\ref{ODE of bar(v)}) satisfying (\ref{initial data of bar v}) is standard. Indeed, for $u\in \mathfrak{H}^1$, using the Picard iteration scheme
\begin{eqnarray*}
\bar{v}=\sum\limits^{\infty}_{i=0}\bar{v}_i,\ \ \bar{v}_0=\bar{v}(\infty)=\vec{i},\ \ \bar{v}_i(r)=\int_{\infty}^r M(s)\bar{v}_{i-1}ds.
\end{eqnarray*}
By H\"{o}lder inequality, we have
\begin{align*}
\left \lVert \bar{v}_i(r) \right \rVert_{C([R,\infty))}  \leq &\left \lVert \partial_r \bar{u} \right \rVert_{L^2(R,\infty)} \left \lVert \frac{1}{\sinh r} \right \rVert_{L^2(R,\infty)}\left \lVert \bar{v}_{i-1}(r) \right \rVert_{C(R,\infty)},\\
\lesssim & \left \lVert u \right \rVert_{\mathfrak{H}^1} \left \lVert \bar{v}_{i-1}(r) \right \rVert_{C(R,\infty)},
\end{align*}
and
\begin{align*}
\left\lVert \partial_r \bar{v}_i \right\rVert_{L^2(R,\infty)}\leq  \left\lVert  \bar{v}_{i-1} \right\rVert_{C(R,\infty)}\left\lVert u \right\rVert_{\mathfrak{H}^1}.
\end{align*}
we choose $R$ large enough such that $\left \lVert \bar{u} \right \rVert_{\dot{H}^1_e([R,\infty))}<\epsilon $, we have $\left \lVert \bar{v}_i(r) \right \rVert_{C([R,\infty))} \leq \epsilon^{i} \left \lVert \bar{v}_0 \right \rVert_{C([R,\infty))}$. Hence, there exists unique solution $\left\lVert \bar{v}\right\rVert_{C(R,\infty)}+\left\lVert \partial_r\bar{v}\right\rVert_{L^2(R,\infty)}\lesssim \left\lVert u\right\rVert_{\mathfrak{H}^1}$. Then by $u\in\mathfrak{H}^1$, in a similar argument, for any $\epsilon>0$,there exists $\delta>0$ sufficiently small, such that $\left\lVert u\right\rVert_{\mathfrak{H}^1(R-\delta,R)}\left\lVert \sinh^{-1}r\right\rVert_{L^2(R-\delta,R)}\ll 1$ for $R-\delta>\epsilon$, the solution can be extended to $r=\epsilon$. Finally, we extend the solution to $r=0$. The first two components of $\bar{v}_i$ can be estimated immediately
\begin{align*}
\left\lVert \bar{v}_i^{1,2}\right\rVert_{C(0,\epsilon)}\lesssim \left\lVert \bar{v}_{i-1}\right\rVert_{C(0,\epsilon)}\left\lVert u\right\rVert_{\mathfrak{H}^1},
\end{align*}
for the third component of $\bar{v}_i$, by integration by parts and $\bar{u}(r)\rightarrow \vec{k}$ as $r\rightarrow 0$, we have
\begin{align*}
\left\lVert \bar{v}_i^3\right\rVert_{C(0,\epsilon)}= & \left\lVert \int_{\epsilon}^r \partial_r(\bar{v}_{i-1}\cdot(\bar{u}-\vec{k})\bar{u}^3)-\partial_r\bar{v}_{i-1}\cdot(\bar{u}-\vec{k})\bar{u}^3-\bar{v}_{i-1}\cdot(\bar{u}-\vec{k})\partial_r \bar{u}^3 ds \right\rVert_{C(0,\epsilon)},\\
\lesssim & \left(\lVert \partial_r \bar{v}_{i-1}\right\rVert_{L^2(0,\epsilon)}+\left\lVert \bar{v}_{i-1} \right\rVert_{C(0,\epsilon)})(\left\lVert u\right\rVert_{\mathfrak{H}^1(0,\epsilon)}+\left\lVert \bar{u}-\vec{k}\right\rVert_{C(0,\epsilon)}),
\end{align*}
and
\begin{align*}
\left\lVert \partial_r\bar{v}_i\right\rVert_{L^2(0,\epsilon)}\lesssim \left\lVert \bar{v}_{i-1}\right\rVert_{C(0,\epsilon)}\left\lVert u\right\rVert_{\mathfrak{H}^1(0,\epsilon)}.
\end{align*}
we choose $\epsilon$ small such that $\left\lVert u\right\rVert_{\mathfrak{H}^1(0,\epsilon)}+\left\lVert \bar{u}-\vec{k}\right\rVert_{C(0,\epsilon)}\ll 1$, then the iteration scheme gives the unique solution in $(0,\epsilon)$ with $\left\lVert \bar{v}\right\rVert_{C(0,\epsilon)}+\left\lVert \partial_r\bar{v}\right\rVert_{L^2(0,\epsilon)}\lesssim (\left\lVert u\right\rVert_{\mathfrak{H}^1(0,\epsilon)}+\left\lVert \bar{u}-\vec{k}\right\rVert_{C(0,\epsilon)}) \left\lVert \bar{v}_0\right\rVert_{C(0,\epsilon)}$. Therefore, by the above procedure, there exists a unique solution of (\ref{ODE of bar(v)}) satisfying (\ref{initial data of bar v}), moreover, we have
\begin{equation}                \label{control v by u}
\left\lVert \bar{v}\right\rVert_{C(0,\infty)}+\left\lVert \partial_r\bar{v}\right\rVert_{L^2(0,\infty)}\lesssim \left\lVert u\right\rVert_{\mathfrak{H}^1(0,\infty)}.
\end{equation}

\subsection{The Schr\"{o}dinger maps system in the Coulomb gauge: dynamic equations for $\psi_k$}
We derive the Schr\"{o}dinger equations for the differentiated fields $\psi_1$ and $\psi_2$.

In the geodesic polar coordinate, the Schr\"{o}dinger map flow can be written as
\begin{equation}    \label{psi0}
\psi_0=i(D_1 \psi_1+\coth r \psi_1 +\frac{1}{\sinh^2 r}D_2 \psi_2).
\end{equation}
Applying the operators $D_1$ and $D_2$ to both sides of this equation, we obtain
\begin{equation}   \label{differentialpsi0}
\left\{\begin{aligned}
D_1 \psi_0   = & i(D_1 D_1 \psi_1 + \coth r D_1 \psi_1 +\frac{1}{\sinh ^2r}D_1 D_2 \psi_2 -\frac{\psi_1}{\sinh ^2 r} -\frac{2\cosh r}{\sinh ^3 r}D_2 \psi_2),\\
D_2 \psi_0   = & i(D_2 D_1 \psi_1 +\coth r D_2 \psi_1 +\frac{D_2 D_2 \psi_2}{\sinh^2 r}).
\end{aligned}
\right.
\end{equation}
By the compatibility condition (\ref{compatibility conditions}), curvature of the connection (\ref{curvature}) and the Coulomb gauge $A_1=0$, we can derive the equations for $\psi_1$ and $\psi_2$,
\begin{equation}    \label{system of psi1 and psi2 version1}
\left\{\begin{aligned}
(i\partial_t +\Delta) \psi_1 = & (A_0 +\frac{A_2^2}{\sinh^2 r}+\frac{1}{\sinh^2 r})\psi_1+ \frac{2i \cosh r A_2}{\sinh^3 r}\psi_2 -\frac{i\Im(\psi_1 \bar{\psi}_2)\psi_2}{\sinh^2 r},\\
(i\partial_t +\Delta) \psi_2 = & (A_0 +\frac{A_2^2}{\sinh^2 r})\psi_1 + i\Im(\psi_1 \bar{\psi}_2)\psi_1.
\end{aligned}
\right.
\end{equation}
where $\Delta:= \partial_{rr}+\coth r\partial_r$. Then (\ref{system of psi1 and psi2 version1}) can be written as
\begin{equation}        \label{system of psi1 and psi2 version2}
\left\{\begin{aligned}
(i\partial_t +\Delta -\frac{2}{\sinh^2 r}) \psi_1 -\frac{2i\cosh r}{\sinh^2 r} \frac{\psi_2}{\sinh r}  =&  (A_0 +\frac{A_2^2-1}{\sinh^2 r})\psi_1\\
& + (\frac{2i \cosh r ( A_2-1)}{\sinh^2 r} -i\Im(\psi_1 \frac{\bar{\psi}_2}{\sinh r}))\frac{\psi_2}{\sinh r},\\
(i\partial_t +\Delta -\frac{2}{\sinh^2 r})\frac{\psi_2}{\sinh r} +\frac{2i\cosh r}{\sinh^2 r} \psi_1  = & (A_0 +\frac{A_2^2-1}{\sinh^2 r})\frac{\psi_2}{\sinh r}\\
&- (\frac{2i \cosh r ( A_2-1)}{\sinh^2 r} -i\Im(\psi_1 \frac{\bar{\psi}_2}{\sinh r}))\psi_1.
\end{aligned}
\right.
\end{equation}
where $A_0$ and $A_2-1$ can be expressed in terms of $\psi_1$ and $\psi_2$. In fact, from the curvature (\ref{curvature}) for $k=1,\mbox{ }l=2$ and compatibility condition (\ref{compatibility conditions}), we have
\begin{equation}                \label{compatibility A2 psi2}
\partial_r A_2 = \Im(\psi_1 \bar{\psi}_2), \mbox{ } \partial_r \psi_2 =iA_2 \psi_1.
\end{equation}
Since $A_2(0)=1$, (\ref{compatibility A2 psi2}) gives
\begin{equation}           \label{A2 with psi1 and psi2}
A_2 -1 = \int_0^r \Im(\psi_1 \bar{\psi}_2)(s)ds.
\end{equation}
From (\ref{curvature}) when $k=0,\ l=1$ and (\ref{psi0}), we have
\begin{equation*}
\partial_r A_0 = \Im(\psi_1 \bar{\psi}_0)=-\frac{1}{2\sinh^2 r} \partial_r (\sinh^2 r |\psi_1|^2-|\psi_2|^2),
\end{equation*}
which together with initial data of $(\bar{v},\bar{w})$ frame, yields
\begin{equation}         \label{A0 with psi1 and psi2}
A_0 (t,r) = -\frac{1}{2}(|\psi_1|^2-|\frac{\psi_2}{\sinh r}|^2)+\int_r^{+\infty} \frac{\cosh s}{\sinh s}(|\psi_1|^2-|\frac{\psi_2}{\sinh s}|^2) ds.
\end{equation}
Therefore the two variables $\psi_1$ and $\psi_2$ are not independent.

Since the linear part of this system is not decoupled, we introduce the two new variables $\psi^+$ and $\psi^-$, defined as
\begin{equation}                   \label{psi+psi-}
\psi^+= \psi_1 +i\frac{\psi_2}{\sinh r},\mbox{ }\psi^-= \psi_1 -i\frac{\psi_2}{\sinh r}.
\end{equation}
From (\ref{system of psi1 and psi2 version2}) and $A_2^2+|\psi_2|^2 =1$, we obtain
\begin{equation} \label{system of psi+ and psi- version}
\left\{\begin{aligned}
(i\partial_t +\Delta -2\frac{\cosh r+1}{\sinh^2 r}) \psi^+  = (A_0 +2\frac{\cosh r(A_2-1)}{\sinh^2 r}-\Im(\psi^+ \frac{\bar{\psi}_2}{\sinh r}))\psi^+, \\
(i\partial_t +\Delta +2\frac{\cosh r-1}{\sinh^2 r}) \psi^-  =  (A_0 -2\frac{\cosh r(A_2-1)}{\sinh^2 r}+\Im(\psi^- \frac{\bar{\psi}_2}{\sinh r}))\psi^-.
\end{aligned}
\right.
\end{equation}
It turns out that the linear part of $\psi^{\pm}$-system is decoupled. The compatibility condition (\ref{compatibility conditions}) is reduced to
\begin{equation}              \label{compatibility psi+psi-}
\partial_r \sinh r(\psi^+-\psi^-) =- A_2 (\psi^++\psi^-),
\end{equation}
and the coefficients $A_0$ and $A_2-1$ can be expressed in terms of $\psi^{\pm}$,
\begin{align}        \label{A2 with psi+ psi-}
& A_2-1=  \int_0^r \frac{|\psi^+|^2 -|\psi^-|^2}{4} \sinh sds,  \\     \label{A0 with psi+ psi-}
& A_0=  -\frac{1}{2} \Re(\bar{\psi}^+ \psi^-)+\int_r^{\infty} \frac{\cosh s}{\sinh s}\Re(\bar{\psi}^+ \psi^-) ds.
\end{align}
Define $\mathcal{V}^{\pm}$ as the vector
\begin{equation}                    \label{definition of V-}
\mathcal{V}^{\pm}=\partial_r u \pm \frac{u \times  (k\times u)}{\sinh r} \in T_u (\H^2),
\end{equation}
then $\psi^{\pm}$ is the representation of $\mathcal{V}^{\pm}$ in the coordinate frame $(v,w)$ and the energy of $u$ has a new representation, i.e
\begin{align*}
E(u)
=& \pi \int_0^{\infty} (|\partial_r \bar{u}|^2 +\frac{|\bar{u} \times (k\times \bar{u})|^2}{\sinh^2 r}) \sinh r dr,\\
=& \pi \int_0^{\infty} |\mathcal{V}^{\pm}|^2 \sinh r \mp 2 \partial_r \bar{u} \cdot  (\bar{u} \times   (k \times \bar{u}))dr, \\
=& \pi \left \lVert \mathcal{V}^{\pm} \right \rVert _{L^2}^2 \mp 2\pi \int_0^{\infty}  \partial_r \bar{u}_3dr,\\
=& \pi \left \lVert \psi^{\pm} \right \rVert _{L^2}^2 \mp 2\pi (\bar{u}_3(\infty)-\bar{u}_3(0)),\\
=& \pi \left\lVert \psi^{\pm}\right\rVert_{L^2}^2.
\end{align*}
Hence, $\left\lVert \psi^{\pm}\right\rVert_{L^2}^2$ is conserved for all time. Moreover, if we assume that $\left\lVert \psi^{\pm}\right\rVert_{L^2},\ \left\lVert \tilde{\psi}^{\pm}\right\rVert_{L^2}<2$ and $\left\lVert u-\tilde{u}\right\rVert_{\mathfrak{H}^1}\ll 1$, we obtain the Lipschitz continuity of $\psi^{\pm}$ with $u$, namely
\begin{equation}                      \label{Lipschitz continuity of psi+-with u}
\left\lVert \psi^{\pm}-\tilde{\psi}^{\pm}\right\rVert_{L^2} \lesssim \left\lVert u-\tilde{u}\right\rVert_{\mathfrak{H}^1}.
\end{equation}
In fact, by the above assumptions, (\ref{A2 with psi+ psi-}) implies $u_3,\ \tilde{u}_3\gtrsim 1-\frac{1}{4}\left\lVert \psi^-\right\rVert_{L^2}>0$. On interval $[R,\infty)$, by (\ref{ODE of bar(v)}), we have
\begin{align*}
(v-\tilde{v})(r)=\int_r^{\infty}(v-\tilde{v})\cdot \partial_r u\ u+\tilde{v}\cdot\partial_r(u-\tilde{u})u-\partial_r \tilde{v}\cdot u(u-\tilde{u})ds,
\end{align*}
then (\ref{control v by u}) and $u_3-\tilde{u}_3=\frac{u_1^2-\tilde{u}_1^2+u_2^2-\tilde{u}_2^2}{u_3+\tilde{u}_3}$ imply
\begin{align*}
\left\lVert v-\tilde{v}\right\rVert_{C(R,\infty)}\lesssim \left\lVert v-\tilde{v}\right\rVert_{C(R,\infty)}\left\lVert \partial_r u\right\rVert_{L^2(R,\infty)}+\left\lVert u-\tilde{u}\right\rVert_{\mathfrak{H}^1},
\end{align*}
choose $R$ large enough, we have $\left\lVert v-\tilde{v}\right\rVert_{C(R,\infty)}\lesssim \left\lVert u-\tilde{u}\right\rVert_{\mathfrak{H}^1(R,\infty)}$. Then for any $\epsilon>0$ small, on interval $[\epsilon,\ R]$, there exists $\delta>0$ such that any interval $I\subset[\epsilon,R]$ with $|I|<\delta$, we have $\left\lVert \partial_r u\right\rVert_{L^2(I)}\left\lVert \sinh^{-1}r\right\rVert_{L^2(I)}\ll 1$. By a similar argument to that on $[R,\infty)$, we obtain $\left\lVert v-\tilde{v}\right\rVert_{C(\epsilon,R)}\lesssim \left\lVert u-\tilde{u}\right\rVert_{\mathfrak{H}^1(\epsilon,R)}$. Finally, on interval $(0,\epsilon)$, by Sobolev embedding (\ref{Sobolev embedding H1e}), we have
\begin{align*}
\left\lVert \frac{u_3-\tilde{u}_3}{\sinh r}\right\rVert_{L^2(\epsilon)}+\left\lVert u_3-\tilde{u}_3\right\rVert_{L^{\infty}(0,\epsilon)}\lesssim \left\lVert u-\tilde{u}\right\rVert_{\mathfrak{H}^1(0,\epsilon)},
\end{align*}
then we get
\begin{align*}
\left\lVert \partial_r (v-\tilde{v})\right\rVert_{L^2(0,\epsilon)}\lesssim \left\lVert v-\tilde{v}\right\rVert_{C(0,\epsilon)}\left\lVert \partial_r u\right\rVert_{L^2}+\left\lVert u-\tilde{u}\right\rVert_{\mathfrak{H}^1},
\end{align*}
which implies by integration by parts
\begin{align*}
\left\lVert v-\tilde{v}\right\rVert_{C(0,\epsilon)}\lesssim & \left\lVert \int_r^{\epsilon}\partial_r((v-\tilde{v})\cdot(u-\vec{k})u)-\partial_r(v-\tilde{v})\cdot(u-\vec{k})u-(v-\tilde{v})\cdot(u-\vec{k})\partial_r uds\right\rVert_{C(0,\epsilon)}\\
& +\left\lVert \int_r^{\epsilon}\partial_r(\tilde{v}\cdot(u-\tilde{u})u)-\partial_r\tilde{v}\cdot(u-\tilde{u})u-\tilde{v}\cdot(u-\tilde{u})\partial_r uds\right\rVert_{C(0,\epsilon)}\\
& + \left\lVert \partial_r\tilde{v}\right\rVert_{L^2(0,\epsilon)}\left\lVert \frac{u-\tilde{u}}{\sinh r}\right\rVert_{L^2(0,\epsilon)},\\
\lesssim & \left\lVert v-\tilde{v}\right\rVert_{C(0,\epsilon)} (\left\lVert u-\vec{k}\right\rVert_{C(0,\epsilon)}+\left\lVert \frac{u-\vec{k}}{\sinh r}\right\rVert_{L^2(0,\epsilon)})\\
& +\left\lVert \partial_r(v-\tilde{v})\right\rVert_{L^2(0,\epsilon)}\left\lVert \frac{u-\vec{k}}{\sinh r}\right\rVert_{L^2(0,\epsilon)}+\left\lVert u-\tilde{u}\right\rVert_{\mathfrak{H}^1(0,\epsilon)},\\
\lesssim & \left\lVert v-\tilde{v}\right\rVert_{C(0,\epsilon)}\left\lVert u\right\rVert_{\mathfrak{H}^1(0,\epsilon)} +\left\lVert u-\tilde{u}\right\rVert_{\mathfrak{H}^1(0,\epsilon)},
\end{align*}
which together with $\left\lVert u\right\rVert_{\mathfrak{H}^1(0,\epsilon)}\ll 1$, yields $\left\lVert v-\tilde{v}\right\rVert_{C(0,\epsilon)}\lesssim \left\lVert u-\tilde{u}\right\rVert_{\mathfrak{H}^1(0,\epsilon)}$. Therefore, we obtain
\begin{align}                       \label{Lipschitz v with u}
\left\lVert v-\tilde{v}\right\rVert_{C(0,\epsilon)}\lesssim \left\lVert u-\tilde{u}\right\rVert_{\mathfrak{H}^1}.
\end{align}
Then by (\ref{Lipschitz v with u}), (\ref{definition of V-}) and Sobolev embedding (\ref{Sobolev embedding H1e}), the Lipschitz continuity (\ref{Lipschitz continuity of psi+-with u}) follows.

In this paper we will work with the key system (\ref{system of psi+ and psi- version}) to obtain the space-time estimates for $\psi^{\pm}$.

Suppose $\psi^{\pm}$ satisfies the compatibility condition (\ref{compatibility psi+psi-}) and $\left \lVert \psi^{\pm} \right \rVert _{L^2}<\infty$, define $A_2$, $\psi_1$, $\psi_2$ by (\ref{A2 with psi+ psi-}) and (\ref{psi+psi-}), then they satisfy the relation (\ref{compatibility A2 psi2}). Furthermore, we claim that $\psi_1 \in L^2$ and $\psi_2,\ A_2 -1 \in \dot{H}^1_e$. In fact, by (\ref{A2 with psi+ psi-}) and (\ref{psi+psi-}), we have $\psi_1 \in L^2$, $A_2\in L^{\infty}$, from (\ref{psi+psi-}), (\ref{compatibility A2 psi2}) and (\ref{basic inequality2}), we get $\frac{\psi_2}{\sinh r},\ \partial_r \psi_2,\ \partial_r A_2$, and $\frac{A_2-1}{\sinh r}\in L^2 $.

Denote $R_+\psi^+=e^{i2\theta}\psi^+$ and $R_-\psi^-=\psi^-$. Then we have
\begin{proposition}      \label{equivalent of u and psi+- proposition}
\begin{equation}         \label{equivalent of u and psi+- inequality}
\left\lVert u\right\rVert_{\mathfrak{H}^3}\sim \left\lVert R_+\psi^+\right\rVert_{H^2}+\left\lVert R_-\psi^-\right\rVert_{H^2}.
\end{equation}
\end{proposition}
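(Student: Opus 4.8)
The statement is an equivalence in the polynomial sense of Lemma~\ref{equivalent of Sobolev lemma}, so I would prove the two bounds of \eqref{equivalent of u and psi+- inequality} separately; by Lemma~\ref{equivalent of Sobolev lemma} one may replace $\left\lVert u\right\rVert_{\mathfrak{H}^3}$ by the extrinsic norm $\left\lVert u\right\rVert_{\mathcal{H}^3}$ throughout. Two structural observations organize the argument. First, $\psi^{\pm}$ is the $(v,w)$-frame representation of the field $\mathcal V^{\pm}=\partial_r u\pm\frac{u\times(\vec{k}\times u)}{\sinh r}$ of \eqref{definition of V-}, hence a function of $r$ alone; and since $u\times(\vec{k}\times u)=\vec{k}-u_3u$ vanishes to first order at $r=0$ in the class $\mathcal{E}_0$ (where $u\to\vec{k}$), one has the cancellations $\mathcal V^{+}=O(r^{2})$ and $\mathcal V^{-}=O(1)$ near the origin, which is precisely what makes $R_{+}\psi^{+}=e^{2i\theta}\psi^{+}$ and $R_{-}\psi^{-}=\psi^{-}$ genuine functions on $\H^2$, of angular momenta $2$ and $0$. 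Second, conjugating by $R_{\pm}$ the reduced Schr\"odinger operators appearing in \eqref{system of psi+ and psi- version} turns $\partial_{rr}+\coth r\,\partial_r-2\frac{\cosh r+1}{\sinh^2 r}$ into $\Delta_{\H^2}-\frac{2}{\cosh r+1}$ and $\partial_{rr}+\coth r\,\partial_r+2\frac{\cosh r-1}{\sinh^2 r}$ into $\Delta_{\H^2}+\frac{2}{\cosh r+1}$, i.e.\ $\Delta_{\H^2}$ plus a bounded, exponentially decaying potential. Consequently, using \eqref{Sobolev inequality 1} and the Sobolev equivalences of Section~2.2, $\left\lVert R_{\pm}\psi^{\pm}\right\rVert_{H^2}\sim\left\lVert\Delta_{\H^2}R_{\pm}\psi^{\pm}\right\rVert_{L^2}+\left\lVert R_{\pm}\psi^{\pm}\right\rVert_{L^2}$; the $L^2$ level is the exact energy identity $E(u)=\pi\left\lVert\psi^{\pm}\right\rVert_{L^2}^2$ established above (and is controlled both ways, since $\left\lVert du\right\rVert_{L^2}^2=2E(u)$); and the problem reduces to comparing, polynomially in each direction, the natural weighted second-order radial norm of $\psi^{\pm}$ (carrying $\sinh^{-1}r$, $\coth r$, $\sinh^{-2}r$ weights) with $\left\lVert u\right\rVert_{\mathcal{H}^3}$.

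For $\left\lVert R_{+}\psi^{+}\right\rVert_{H^2}+\left\lVert R_{-}\psi^{-}\right\rVert_{H^2}\lesssim P(\left\lVert u\right\rVert_{\mathfrak{H}^3})$ I would first upgrade the frame bound \eqref{control v by u} to higher order: differentiating the ODE \eqref{ODE of bar(v)} and bootstrapping, one obtains $\sum_{i\le j}\left\lVert\partial_r^i\bar v\right\rVert_{L^2}+\left\lVert\partial_r^{j-1}\bar v\right\rVert_{L^\infty}\lesssim P(\left\lVert u\right\rVert_{\mathfrak{H}^j})$ for $j=1,2,3$, using \eqref{Sobolev embedding H1e} and \eqref{Sobolev inequality 4}, and the same for $\bar w=\bar u\times\bar v$. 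Pairing $\mathcal V^{\pm}$ against $\bar v,\bar w$ and differentiating up to twice in $r$ then expresses $\partial_r^2\psi^{\pm}$ through $\partial_r^3\bar u$ (at the $\mathcal H^3$ level), products of lower derivatives of $\bar u$ with up to two derivatives of the frame, and terms carrying the singular weights. Each singular term is absorbed by Lemma~\ref{basic inequality}: $\frac{\vec{k}-u_3u}{\sinh r}$ and its $r$-derivatives enter $L^2(\sinh r\,dr)$ via \eqref{basic inequality1}--\eqref{basic inequality2} because $\vec{k}-u_3u$ vanishes to first order at $r=0$, while $\frac{\psi^{+}}{\sinh^2 r}$ is controlled because $\mathcal V^{+}$ vanishes there to second order. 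Summing gives the polynomial bound for the weighted radial norm, and the $R_{\pm}$-conjugation of the first paragraph converts it to the $H^2(\H^2)$ bound.

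For the reverse bound $\left\lVert u\right\rVert_{\mathfrak{H}^3}\lesssim Q\big(\left\lVert R_{+}\psi^{+}\right\rVert_{H^2}+\left\lVert R_{-}\psi^{-}\right\rVert_{H^2}\big)$ I would reconstruct $u$ from $\psi^{\pm}$ as indicated after \eqref{Lipschitz continuity of psi+-with u}: set $\psi_1=\tfrac12(\psi^{+}+\psi^{-})$ and $\tfrac{\psi_2}{\sinh r}=\tfrac{1}{2i}(\psi^{+}-\psi^{-})$, define $A_2$ and $A_0$ by \eqref{A2 with psi+ psi-}--\eqref{A0 with psi+ psi-}, and integrate the ODE system \eqref{system of u,v,w} with $A_1=0$ inward from $r=\infty$ with boundary frame $(\vec{k},\vec{i},\vec{j})$; the a priori information $\psi_1\in L^2$, $\psi_2,A_2-1\in\dot{H}^1_e$ recorded above makes this iteration converge and produces $(u,v,w)$. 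Since \eqref{system of u,v,w} writes $du$ linearly in $(\psi_1,\psi_2)$ with coefficients in the frame, $\left\lVert du\right\rVert_{L^2}\lesssim\left\lVert\psi^{\pm}\right\rVert_{L^2}$; differentiating \eqref{system of u,v,w}, controlling $A_2$ via \eqref{compatibility A2 psi2}, $A_0$ via \eqref{A0 with psi+ psi-} and Lemma~\ref{basic inequality}, and $\partial_r^j\bar v$ via its own ODE (driven by $\psi_1$), one bootstraps $\left\lVert\nabla du\right\rVert_{L^2}$ and then $\left\lVert\nabla^2 du\right\rVert_{L^2}$ in terms of the first- and second-order weighted radial norms of $\psi^{\pm}$, hence of $\left\lVert R_{\pm}\psi^{\pm}\right\rVert_{H^2}$ by the $R_{\pm}$-conjugation.

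The main obstacle is the weighted book-keeping near $r=0$: through every differentiation one must track the order of vanishing at the origin of $\mathcal V^{\pm}$, of $A_2-1$, of $\psi_2$, and of the third components of $\bar v,\bar w$, and then pair it against the singular weights $\sinh^{-1}r$, $\sinh^{-2}r$, $\coth r$ via the precise estimates of Lemma~\ref{basic inequality}; this is exactly what legitimizes the $\sinh^{-2}r\,\psi^{+}$ contribution to $\Delta_{\H^2}R_{+}\psi^{+}$, and likewise the $\sinh^{-2}r(A_2-1)$ and $\sinh^{-1}r\,\psi_2$ contributions in the reconstruction, and it is the step where the restriction to $\mathcal{E}_0$ (forcing $u\to\vec{k}$, hence the required cancellations) is genuinely used. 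A secondary point is that the correspondence $u\leftrightarrow\psi^{\pm}$ is nonlinear — the frame solves a nonlinear ODE and $u\times(\vec{k}\times u)$ is quadratic in $u$ — so only polynomial equivalence can be expected, with the $L^2$ level tied to the conserved energy $E(u)$.
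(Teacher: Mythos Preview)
Your outline is sound and would succeed, but it diverges from the paper's argument in two notable ways, and you miss one simplification that makes the paper's route shorter.

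First, you propose to control $\partial_r^j\bar v$ via the ODE \eqref{ODE of bar(v)} and then expand $\partial_r^k\psi^{\pm}$ by the Leibniz rule. The paper avoids this entirely: the Coulomb gauge $A_1=0$ gives $\partial_r(v+iw)=-(\psi_1)u$, which is \emph{normal} to $T_u\S^2$, so $\mathcal V^{\pm}\cdot\partial_r(v+iw)=0$ and hence $\partial_r\psi^{\pm}=(\partial_r\mathcal V^{\pm})\cdot(v+iw)$ with no frame-derivative contribution. The paper then computes $\partial_r\mathcal V^{\pm}$ and $\partial_r^2\mathcal V^{\pm}$ explicitly in terms of the components $u_1,u_2,u_3$ (your representation $\mathcal V^{\pm}=\partial_r u\pm\frac{\vec k-u_3u}{\sinh r}$), producing the concrete expressions \eqref{F+-}--\eqref{partial psi+-}. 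One technical wrinkle you do not mention: the identity \eqref{partial psi+-} contains a term $\frac{1-u_3}{\sinh r}\psi^-$, so to pass from $F^{\pm}\in L^2$ to $\partial_r\psi^{-}\in L^2$ the paper uses the integrating factor $e^{\int_r^\infty\frac{u_3-1}{\sinh s}ds}$ (legitimate since $\frac{u_3-1}{\sinh r}\in L^1(dr)$) and then a Gagliardo--Nirenberg step to get $\psi^-\in L^4$ before closing.

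Second, for the reverse inequality you propose to \emph{reconstruct} $u$ from $\psi^{\pm}$ via the ODE \eqref{system of u,v,w}. The paper does not do this here (that machinery is developed only afterwards, in Lemma~\ref{construct psi2 A2 from psi+} and Proposition~\ref{construct u proposition}); instead it reads the same explicit formulas \eqref{F+-}--\eqref{partial psi+-} backwards. Given $R_{\pm}\psi^{\pm}\in H^1$, Sobolev gives $\psi^{\pm}\in L^4$, hence $\frac{u_3-1}{\sinh r}\in L^4$, hence $F^{\pm}\in L^2$; and $F^{\pm}$ is precisely the tangential part of $(\partial_{rr}\mp(\frac{\cosh r}{\sinh r}\partial_r+\frac{\partial_\theta^2}{\sinh^2 r}))u$, whose normal part $-|\psi_1|^2\pm|\tfrac{\psi_2}{\sinh r}|^2$ is already in $L^2$. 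This directly yields the equivariant $\mathcal H^2$ characterization \eqref{representation of u in H2}, and the $\mathcal H^3$ step is handled the same way with one more $r$-derivative. Your approach would also close, but it duplicates the later reconstruction argument.

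Your $R_{\pm}$-conjugation observation (that the radial operators in \eqref{system of psi+ and psi- version} become $\Delta_{\H^2}\mp\frac{2}{\cosh r+1}$) is correct and is exactly how the paper later obtains Strichartz estimates, but it is not used in this proof; the paper works directly with the weighted radial components $\partial_r^2\psi^{\pm}$, $\frac{\cosh r}{\sinh r}\partial_r\psi^{\pm}$, $\frac{\psi^+}{\sinh^2 r}$, matching them against the equivariant components \eqref{representation of u in H2}, \eqref{representation of u in H3 1}, \eqref{representation of u in H3 2} of $\|u\|_{\mathcal H^3}$ via Lemma~\ref{equivalent of Sobolev lemma}.
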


\begin{proof}
If $u\in\mathfrak{H}^1$, we easily obtain $\left\lVert du\right\rVert_{L^2}^2=\pi\left\lVert \psi^{\pm}\right\rVert_{L^2}^2$. If $u\in\mathfrak{H}^2$, by the equivariance condition, we have 
\begin{align}           \label{representation of u in H2}
\partial_{rr}u,\ \ (\frac{\cosh r}{\sinh r}\partial_r-\frac{1}{\sinh^2 r})(u_1,\ u_2),\ \ \frac{\cosh r}{\sinh r}\partial_r u_3\in L^2,
\end{align}
Since $A_1=0$, then $\nabla_r(v+iw)=0$, which gives
\begin{equation}
\begin{aligned}
\partial_r\psi^{\pm}=&\partial_r(\mathcal{V}^{\pm}\cdot(v+iw)),\\
=&\partial_r\mathcal{V}^{\pm}\cdot(v+iw),
\end{aligned}
\end{equation}
by the representation of $\mathcal{V}^{\pm}$ (\ref{definition of V-}), we have
\begin{align}                         \nonumber
\partial_r\mathcal{V}^{\pm}= & \partial_r(\partial_r u\pm \frac{-u_3 u+\vec{k}}{\sinh r}),\\     \nonumber
= & \partial_{rr} u\mp\frac{\partial_r u_3\cdot u+u_3\partial_r u}{\sinh r}\mp \frac{\cosh r(\vec{k}-u_3u)}{\sinh^2 r},\\ \nonumber
= & (\partial_{rr}\mp\frac{\partial_r}{\sinh r}\pm \frac{\cosh r}{\sinh^2 r})u\mp \frac{u_3 -1}{\sinh r}\partial_r u\mp \frac{\cosh r}{\sinh^2 r}\vec{k}\\      \nonumber
& \pm \frac{\cosh r(u_3-1)}{\sinh^2 r}u\mp \frac{\partial_r u_3\cdot u}{\sinh r},\\             \label{F+-}
= & ((\partial_{rr}\mp(\frac{\partial_r}{\sinh r}-\frac{\cosh r}{\sinh^2 r}))u_1,(\partial_{rr}\mp(\frac{\partial_r}{\sinh r}-\frac{\cosh r}{\sinh^2 r}))u_2,(\partial_{rr}\mp\frac{\partial_r}{\sinh r})u_3)\\\nonumber
& \pm(\frac{\cosh r(u_3-1)}{\sinh^2 r}\vec{k}-\frac{u_3-1}{\sinh r}\partial_r u)\pm(\frac{\cosh r(u_3-1)}{\sinh^2 r}-\frac{\partial_r u_3}{\sinh r})u,
\end{align}
denote $F^{\pm}=(\ref{F+-})\cdot (v+iw)$, then
\begin{align}                    \label{partial psi+-}
\partial_r\psi^{\pm}=F^{\pm}\pm(\frac{1-u_3}{\sinh r}\psi^-+i\frac{\cosh r-1}{\sinh r}(u_3-1)\frac{\psi_2}{\sinh r}).
\end{align}
For $\psi^-$, since $\big|\frac{u_3-1}{\sinh r}\big|\lesssim \frac{u_1^2+u_2^2}{\sinh r}\in L^1(dr)$, applying $e^{\int_r^{\infty}\frac{u_3-1}{\sinh s}ds}$ to both sides of (\ref{partial psi+-}), by $F^{\pm}\in L^2$, we have $\partial_r(e^{\int_r^{\infty}\frac{u_3-1}{\sinh s}ds}\psi^-)\in L^2$. Since $u_3$ and $\psi^-$ are radial, we obtain $e^{\int_r^{\infty}\frac{u_3-1}{\sinh s}ds}\psi^-\in \dot{H}^1$, which gives $\psi^-\in L^4$ by (\ref{Sobolev inequality 2}). Hence, by (\ref{partial psi+-}) and $\big|\frac{u_3-1}{\sinh r}\big|\lesssim \frac{|u_1|+|u_2|}{\sinh r}\in L^4$, we have $\partial_r \psi^-\in L^2$. It also follows that $\partial_r \psi^+\in L^2$. 

In order to prove $\frac{\psi^+}{\sinh r}\in L^2$, we rewrite
\begin{align*}
\frac{\psi^+}{\sinh r}=& (\frac{\partial_r u}{\sinh r}+\frac{-u_3 u+\vec{k}}{\sinh^2 r})\cdot(v+iw),\\
=& ((\frac{\partial_r }{\sinh r}-\frac{1}{\sinh^2 r})u_1,(\frac{\partial_r }{\sinh r}-\frac{1}{\sinh^2 r})u_2,\frac{\partial_r }{\sinh r}u_3)\cdot(v+iw)\\
&+(\frac{1-u_3}{\sinh^2 r}u_1,\frac{1-u_3}{\sinh^2 r}u_2,\frac{u_1^2+u_2^2}{\sinh^2 r})\cdot(v+iw),
\end{align*}
by $\big|\frac{1-u_3}{\sinh^2 r}\big|\lesssim \frac{u_1^2+u_2^2}{\sinh^2 r}\in L^2$ and (\ref{representation of u in H2}), we have $\frac{\psi^+}{\sinh r}\in L^2$.

Conversely, if $R_{\pm}\psi^{\pm}\in H^1$, (\ref{Sobolev inequality 2}) implies $\psi^{\pm}\in L^4$. Then by $\big|\frac{u_3-1}{\sinh r}\big|\lesssim \frac{|\psi_2|}{\sinh r}\in L^4$ and (\ref{partial psi+-}), we have $F^{\pm}\in L^2$, namely, $[\partial_{rr}\mp(\frac{\cosh r}{\sinh r}\partial_r+\frac{\partial_{\theta}^2}{\sinh^2 r})]u\cdot(v+iw)\in L^2$. The part of $[\partial_{rr}\mp(\frac{\cosh r}{\sinh r}\partial_r+\frac{\partial_{\theta}^2}{\sinh^2 r})]u$ in the normal space is $-|\psi_1|^2\pm\big|\frac{\psi_2}{\sinh r} \big|\in L^2$ by $\psi^{\pm}\in L^4$. Therefore, (\ref{representation of u in H2}) is obtained.

If $u\in\mathfrak{H}^3$, by (\ref{equivalent of Hl of u}) and Lemma \ref{equivalent of Sobolev lemma}, we obtain $\nabla(-\Delta)u_i\in L^2$ for $i=1,\ 2,\ 3$, then by equivariance condition, we get
\begin{align}            \label{representation of u in H3 1}
\partial_r (\partial_{rr}+\frac{\cosh r}{\sinh r}\partial_r-\frac{1}{\sinh^2 r})(u_1,\ u_2),\ \partial_r(\partial_{rr}+\frac{\cosh r}{\sinh r}\partial_r)u_3\in L^2,
\end{align}
and 
\begin{align}           \label{representation of u in H3 2}
\frac{1}{\sinh r}(\partial_{rr}+\frac{\cosh r}{\sinh r}\partial_r-\frac{1}{\sinh^2 r})(u_1,\ u_2)\in L^2.
\end{align}
In order to prove $R_{\pm}\psi^{\pm}\in H^2$, it suffices to prove 
\begin{align}
(\partial_{rr}+\frac{\cosh r}{\sinh r}\partial_r-\frac{4}{\sinh^2 r})\psi^+,\ (\partial_{rr}+\frac{\cosh r}{\sinh r}\partial_r)\psi^-\in L^2.
\end{align}
By (\ref{partial psi+-}), we have 
\begin{align}            \nonumber
\partial_{rr}\psi^{\pm}=&\partial_r\big((\partial_{rr}\mp(\frac{\partial_r}{\sinh r}-\frac{\cosh r}{\sinh^2 r}))u_1,(\partial_{rr}\mp(\frac{\partial_r}{\sinh r}-\frac{\cosh r}{\sinh^2 r}))u_2,(\partial_{rr}\mp\frac{\partial_r}{\sinh r})u_3\big)\cdot(v+iw)\\\nonumber
& \pm\partial_r(i\frac{\cosh r-1}{\sinh r}(u_3-1)\frac{\psi_2}{\sinh r}\mp\frac{u_3-1}{\sinh r}\psi^-),\\        \label{estimate partial rr psi- 1}
=& \partial_r F^{\pm}-\frac{u_3-1}{\sinh r}\partial_r\psi^-\pm(u_3-1)(i\frac{\cosh r-1}{\sinh^2 r}(\frac{\psi_2}{\sinh r}+\frac{\partial_r(\psi^+-\psi^-)}{2i})\pm \frac{\cosh r}{\sinh^2 r}\psi^-)\\    \label{estimate partial rr psi- 2}
& \pm \frac{\partial_r A_2}{\sinh r}(i\frac{\cosh r-1}{\sinh r}\psi_2\mp\psi^-).
\end{align}
Since $R_{\pm}\psi^{\pm}\in H^1$, (\ref{Sobolev inequality 2}) implies $\psi^{\pm}\in L^4\cap L^6$, then by $|u_3-1|\lesssim |\psi_2|^2$ and (\ref{A2 with psi1 and psi2}), the third term of (\ref{estimate partial rr psi- 1}) and (\ref{estimate partial rr psi- 2}) are in $L^2$. From (\ref{representation of u in H3 1}), we also have $\partial_r F^{\pm}\in L^2$. Hence, $\partial_r(e^{\int_{\infty}^r\frac{u_3-1}{\sinh s}ds}\partial_r \psi^-)\in L^2$, which further gives $e^{\int_{\infty}^r\frac{u_3-1}{\sinh s}ds}\partial_r \psi^-\in L^4$, this implies $\partial_r \psi^-\in L^4$. Since $\frac{u_3-1}{\sinh r}\in L^4$, we obtain $\partial_{rr}\psi^-\in L^2$, therefore, we also get $\partial_{rr}\psi^+\in L^2$. 

Next, we estimate this term 
\begin{align}           \label{estimate third term of delta psi-}
\frac{\cosh r}{\sinh r}\partial_r \psi^-=\frac{\cosh r}{\sinh r}(F^--i\frac{\cosh r-1}{\sinh r}(u_3-1)\frac{\psi_2}{\sinh r}+\frac{u_3-1}{\sinh r}\psi^-).
\end{align}
By (\ref{representation of u in H3 2}), the first two components of $\frac{\cosh r}{\sinh r}F^-$ are in $L^2$. For the third component, which can be written as 
\begin{align}            \label{third component}
\frac{\cosh r}{\sinh r}(\partial_{rr}+\frac{\partial_r}{\sinh r})u_3\vec{k}(v+iw)=\frac{i(\cosh r-1)}{\sinh r}\psi_2(\partial_{rr}+\frac{\partial_r}{\sinh r})u_3+\frac{i\psi_2}{\sinh r}(\partial_{rr}+\frac{\partial_r}{\sinh r})u_3.
\end{align}
By (\ref{representation of u in H2}) and (\ref{representation of u in H3 1}), we have $\Delta u_3\in L^4$. Hence, the right hand side of (\ref{third component}) is in $L^2$. By a similar argument, the third term in (\ref{estimate third term of delta psi-}) 
is also in $L^2$. Thus, $R_-\psi^-\in H^2$.

For $R_+\psi^+$, we need to estimate
\begin{align} \nonumber
&(\partial_{rr}+\frac{\cosh r}{\sinh r}\partial_r-\frac{4}{\sinh^2 r})\psi^+\\     \nonumber 
=& \big((\partial_r^3+\frac{\cosh r-1}{\sinh r}\partial_r^2+\frac{\cosh r-4}{\sinh^2 r}\partial_r+\frac{3}{\sinh^3 r})u_1,(\partial_r^3+\frac{\cosh r-1}{\sinh r}\partial_r^2+\frac{\cosh r-4}{\sinh^2 r}\partial_r+\frac{3}{\sinh^3 r})u_2,\\ \label{estimate delta psi+}
&(\partial_r^3+\frac{\cosh r-1}{\sinh r}\partial_r^2-\frac{1}{\sinh^2 r}\partial_r)u_3\big)\cdot(v+iw)\\\nonumber
&-\frac{4(1-u_3)}{\sinh^3 r}i\psi_2+\partial_r(\frac{\cosh r-1}{2\sinh r}(u_3-1)(\psi^+-\psi^-)-\frac{u_3-1}{\sinh r}\psi^-)-\frac{3i\partial_r u_3}{\sinh^2 r}\psi_2\\\nonumber
&+\frac{\cosh r}{2\sinh r}\frac{\cosh r-1}{\sinh r}(u_3-1)(\psi^+-\psi^-)-\frac{\cosh r}{\sinh r}\frac{u_3-1}{\sinh r}\psi^-.
\end{align}
By (\ref{representation of u in H2}) and (\ref{representation of u in H3 2}), the first two components of (\ref{estimate delta psi+}) are in $L^2$, namely for $i=1,\ 2$
\begin{align*}
&(\partial_r^3+\frac{\cosh r-1}{\sinh r}\partial_r^2+\frac{\cosh r-4}{\sinh^2 r}\partial_r+\frac{3}{\sinh^3 r})u_i\\
=& \partial_r\Delta u_i-\frac{1}{\sinh r}\Delta u_i+(\frac{\cosh r-1}{\sinh r}\partial_r^2+4\frac{\cosh r-1}{\sinh^2 r}\partial_r)u_i.
\end{align*}
For the third component, since $\partial_r \Delta u_3\in L^2$, it suffices to estimate 
\begin{align*}
\frac{\cosh r}{\sinh r}\partial_r^2 u_3 \vec{k}\cdot(v+iw)=i\frac{\cosh r-1}{\sinh r}\psi_2 \partial_r^2 u_3+\frac{1}{2}(\psi^+-\psi^-)\partial_r^2 u_3.
\end{align*}
By (\ref{representation of u in H2}) and $|\psi_2|\leq 1$, we get $\frac{\cosh r-1}{\sinh r}\psi_2 \partial_r^2 u_3\in L^2$. By (\ref{Sobolev embedding H1e}) and (\ref{Sobolev inequality 4}), we have $\left\lVert \psi^+\right\rVert_{L^{\infty}}\lesssim \left\lVert \psi^+\right\rVert_{\dot{H}^1_e}$ and $\left\lVert \psi^-\right\rVert_{L^{\infty}}\lesssim \left\lVert \psi^-\right\rVert_{H^2}$, then $\frac{1}{2}(\psi^+-\psi^-)\partial_r^2 u_3\in L^2$. Therefore, (\ref{estimate delta psi+}) are in $L^2$. The other terms are also easily obtained by Sobolev embedding and $A_2^2+|\psi_2|^2=1$. Thus, $R_{+}\psi^+\in H^2$ is obtained.

\end{proof}

\subsection{Recovering the map from $\psi^+$}                
Here we will keep track of $\psi^+ \in L^2$, since it contains all the information about the map. Indeed, by (\ref{compatibility A2 psi2}), we have the system of $(A_2, \psi_2)$
\begin{equation}                           \label{system of A2 psi2}
\left\{
\begin{aligned}
& \partial_rA_2=  \Im(\psi^+ \bar{\psi}_2)-\frac{|\psi_2|^2}{\sinh r},\\
& \partial_r \psi_2 =iA_2 \psi^+ +A_2 \frac{\psi_2}{\sinh r}.\\
\end{aligned}
\right.
\end{equation}
Then from the choice of $(\bar{v}(\infty),\ \bar{w}(\infty))$ (\ref{initial data of bar v}), it gives the data $(A_2(\infty),\ \psi_2(\infty))=(1,\ 0)$. Given $\psi^+ \in L^2$ with $\left\lVert \psi^+ \right\rVert_{L^2}<2$, we reconstruct $A_2 -1,\ \psi_2 \in \dot{H}^1_e$ by above system (\ref{system of A2 psi2}), then by the system in (\ref{system of u,v,w}) with condition (\ref{initial data of bar v}), we can return to the map $u$.

\begin{lemma}                      \label{construct psi2 A2 from psi+}
Let $\psi^+ \in L^2$, such that $\left \lVert  \psi^+  \right \rVert _{L^2} <2 $, the system (\ref{system of A2 psi2}) has a unique solution $(A_2, \psi_2)$ satisfying $\psi_2, \mbox{ }A_2 -1 \in \dot{H}^1_e$, and
\begin{equation}             \label{construct psi2 A2 from psi+ control}
\left \lVert  \psi_2  \right \rVert_{\dot{H}^1_e}+\left \lVert  A_2 -1  \right \rVert_{\dot{H}^1_e}+\left \lVert \frac{A_2 -1}{\sinh r}  \right \rVert_{L^1(dr)} \lesssim \left \lVert  \psi^+  \right \rVert_{L^2}.
\end{equation}
Moreover, we have the following properties:\\
(i)\ \ If $\psi^+\in L^p$, with $1 \leq  p< \infty$, then $\psi^-, \mbox{ }\frac{\psi_2}{\sinh r},\mbox{ }\frac{A_2 -1}{\sinh r}\in L^p$ and
\begin{equation}             \label{construct psi2 A2 from psi+ control 1}
\left \lVert  \psi^- \right \rVert_{L^p}+\left \lVert  \frac{\psi_2}{\sinh r} \right \rVert_{L^p}+\left \lVert  \frac{A_2 -1}{\sinh r} \right \rVert_{L^p} \lesssim \left \lVert  \psi^+  \right \rVert_{L^p}.
\end{equation}
(ii)\ \ Given $\epsilon >0$, and $R$ such that $\left \lVert \psi^+ \right \rVert_{L^2(\R \backslash[R^{-1},R])} \leq \epsilon$, then we have
\begin{equation}             \label{construct psi2 A2 from psi+ control 2}
\left \lVert \psi_2 \right \rVert_{\dot{H}^1_e(\R \backslash[\epsilon R^{-1},R])}+\left \lVert A_2 -1 \right \rVert_{\dot{H}^1_e(\R \backslash[\epsilon R^{-1},R])}  \leq \epsilon.
\end{equation}
(iii)\ \ If $(\tilde{A}_2,\ \tilde{\psi}_2)$ is another solution to (\ref{system of A2 psi2}) corresponding to $\tilde{\psi}^+$, then
\begin{equation}            \label{construct psi2 A2 from psi+ control 3}
\left\lVert\psi^--\tilde{\psi}^- \right\rVert_{L^2}+\left\lVert \psi_2-\tilde{\psi}_2\right\rVert_{\dot{H}^1_e}+\left\lVert A_2-\tilde{A}_2\right\rVert_{\dot{H}^1_e}\lesssim \left\lVert \psi^+-\tilde{\psi}^+\right\rVert_{L^2}.
\end{equation}
(iv)\ \ If $R_+\psi^+\in H^s$, then $R_-\psi^-\in H^s$ for $s\in\{1,\ 2\}$.
\end{lemma}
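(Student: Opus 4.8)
The plan is to convert the ODE system (\ref{system of A2 psi2}), together with the endpoint data $(A_2(\infty),\psi_2(\infty))=(1,0)$, into integral equations and solve them by a fixed point argument in $\{\psi_2\in\dot H^1_e,\ A_2-1\in\dot H^1_e,\ (A_2-1)/\sinh r\in L^1(dr)\}$. The decisive observation is that the $\psi_2$-equation $\partial_r\psi_2-\frac{A_2}{\sinh r}\psi_2=iA_2\psi^+$ is a scalar linear ODE whose integrating factor is $\exp(-\int^r\frac{A_2}{\sinh s}ds)$; integrating from $r=\infty$ (where $\psi_2$ vanishes) gives
\[
\psi_2(r)=-\int_r^\infty \exp\Big(\int_s^r\frac{A_2(\tau)}{\sinh\tau}\,d\tau\Big)\,iA_2(s)\,\psi^+(s)\,ds,
\]
while the $A_2$-equation integrates to $A_2(r)-1=-\int_r^\infty \Im(\psi_1\bar\psi_2)(s)\,ds$ with $\psi_1=\psi^+-i\psi_2/\sinh r$ (equivalently $A_2-1=\tfrac14\int_0^r(|\psi^+|^2-|\psi^-|^2)\sinh s\,ds$, cf. (\ref{A2 with psi+ psi-})). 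For $s\ge r$ one has $\exp(\int_s^r\frac{A_2(\tau)}{\sinh\tau}d\tau)=\exp(\int_s^r\frac{d\tau}{\sinh\tau})\cdot\exp(\int_r^s\frac{1-A_2}{\sinh\tau}d\tau)$, and once $0\le A_2\le 1$ the second factor lies in $[1,e^{\|(A_2-1)/\sinh r\|_{L^1(dr)}}]$; thus the kernel producing $\psi_2/\sinh r$ is pointwise dominated, up to this harmless factor, by exactly the kernel in (\ref{basic inequality5}) (with $f=A_2\psi^+$, $|f|\le|\psi^+|$). Hence (\ref{basic inequality5}) gives $\|\psi_2/\sinh r\|_{L^p}\lesssim\|\psi^+\|_{L^p}$ for all $1\le p<\infty$, and then $\partial_r\psi_2=iA_2\psi^++A_2\psi_2/\sinh r\in L^2$.

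The pointwise bounds $0\le A_2\le 1$, $|\psi_2|\le 1$ are supplied by the conservation law $A_2^2+|\psi_2|^2\equiv 1$, immediate from differentiating the system and using the data; the hypothesis $\|\psi^+\|_{L^2}<2$ forces $1-A_2(r)\le\tfrac14\|\psi^-\|_{L^2}^2=\tfrac14\|\psi^+\|_{L^2}^2<1$, so $A_2>0$ everywhere and the weight comparison above is legitimate. Writing $1-A_2=|\psi_2|^2/(1+A_2)\le|\psi_2|^2$ then yields $\|(A_2-1)/\sinh r\|_{L^p}\le\|\psi_2/\sinh r\|_{L^p}$, $\|(A_2-1)/\sinh r\|_{L^1(dr)}\lesssim\|\psi_2/\sinh r\|_{L^2}^2\lesssim\|\psi^+\|_{L^2}^2\lesssim\|\psi^+\|_{L^2}$, while $\|\partial_r A_2\|_{L^2}\le\|\psi_2\|_{L^\infty}\|\psi_1\|_{L^2}\lesssim\|\psi^+\|_{L^2}$, and $\|\psi^-\|_{L^2}=\|\psi^+\|_{L^2}$ follows from $A_2(0)=A_2(\infty)=1$ together with the formula for $\partial_r A_2$. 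This establishes (\ref{construct psi2 A2 from psi+ control}) a priori; for existence I would first run a contraction in a small ball (for $\|\psi^+\|_{L^2}\le\delta_0$, where one imposes smallness of $\|A_2-1\|_{L^\infty}$ and $\|\psi_2\|_{L^\infty}$ directly), the two bounds above closing both the map and its Lipschitz estimate, and then for general $\|\psi^+\|_{L^2}<2$ use that the a priori estimate relies only on the conservation law and $0<A_2\le1$: the ODE is regular away from $r=0$, so local solvability near $r=\infty$ plus these uniform bounds (in particular $\psi_2/\sinh r\in L^2$) permit continuation down to $r=0$, with uniqueness from the same Volterra-type Gronwall estimate. \textbf{The main technical point is precisely this step: recognizing the integrating-factor structure so that (\ref{basic inequality5}) applies, and verifying that the a priori estimate genuinely closes for all $\|\psi^+\|_{L^2}<2$ rather than only small data, via the conservation law.}

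Properties (i)--(iii) then follow by examining the same representations. For the $L^p$-statement (i), the bounds for $\psi_2/\sinh r$, $(A_2-1)/\sinh r$ and $\psi^-=\psi^+-2i\psi_2/\sinh r$ are exactly those above, read for general $p$. For (ii) one restricts the integral defining $\psi_2(r)$: for $r>R$ it sees only $\psi^+$ on $(R,\infty)$, so the restricted form of (\ref{basic inequality5}) gives $\|\psi_2/\sinh r\|_{L^2(R,\infty)}\lesssim\|\psi^+\|_{L^2(R,\infty)}\le\epsilon$, and $\partial_r\psi_2$, $\partial_r A_2$, $(A_2-1)/\sinh r$ inherit this through their formulas and the pointwise bounds; for $r<\epsilon R^{-1}$ one uses in addition that $\exp(\int_s^r\frac{A_2(\tau)}{\sinh\tau}d\tau)$ decays like a positive power of $rR$ for $s\ge R^{-1}$, which suppresses the (possibly large) part of $\psi^+$ on $[R^{-1},R]$, while the part on $(r,R^{-1})$ is handled by (\ref{basic inequality5}). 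For (iii) one subtracts the integral equations for $(A_2,\psi_2)$ and $(\tilde A_2,\tilde\psi_2)$, expands the differences of weights and of coefficients (each carrying a factor $A_2-\tilde A_2$ or $\int(A_2-\tilde A_2)/\sinh$, controlled through the $A_2$-formula by $\|\psi^+-\tilde\psi^+\|_{L^2}$), and closes the loop by Gronwall using the a priori bounds of (i); then $\psi^--\tilde\psi^-=(\psi^+-\tilde\psi^+)-2i(\psi_2-\tilde\psi_2)/\sinh r$.

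Finally, for (iv) I would not use the $\psi_2$-formula but the compatibility identity (\ref{compatibility psi+psi-}), which rearranges into the first-order ODE
\[
\partial_r\psi^-+\frac{\cosh r-A_2}{\sinh r}\,\psi^-=\partial_r\psi^++\frac{\cosh r+A_2}{\sinh r}\,\psi^+ .
\]
Since $\psi^\pm\in L^\infty$ (from $R_+\psi^+\in H^1$ and (\ref{Sobolev embedding H1e})) one gets $1-A_2\lesssim r^2$ near $r=0$, so $\frac{\cosh r-A_2}{\sinh r}$ is bounded on $(0,\infty)$ and the left side is in $L^2$ once $\psi^-\in L^2$; on the right, $\partial_r\psi^+\in L^2$ and $\coth r\,\psi^+\in L^2$ (because $\psi^+/\sinh r\in L^2$ near $0$), both encoded in $R_+\psi^+\in H^1$, so $\partial_r\psi^-\in L^2$ and $R_-\psi^-=\psi^-\in H^1$. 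For $s=2$ one differentiates this ODE once more; all coefficients and their $r$-derivatives are bounded except for $\partial_r\frac{\cosh r+A_2}{\sinh r}\sim\sinh^{-2}r$ near $0$, which is controlled against $\psi^+$ and $\partial_r\psi^+$ because the weighted norms $\|\psi^+/\sinh^2 r\|_{L^2}$, $\|\partial_r\psi^+/\sinh r\|_{L^2}$ are bounded by $\|R_+\psi^+\|_{H^2}$ --- these are exactly the weighted identities established in the proof of Proposition \ref{equivalent of u and psi+- proposition} --- so combining with the $s=1$ conclusion gives $R_-\psi^-\in H^2$.
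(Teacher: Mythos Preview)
Your overall architecture is correct and matches the paper: solve the $\psi_2$–equation via an integrating factor and appeal to the kernel bound (\ref{basic inequality5}), use the conservation law $A_2^2+|\psi_2|^2=1$ together with $\lVert\psi^+\rVert_{L^2}<2$ to force $A_2>0$, and then read off the remaining bounds. However, there is a genuine gap in the step you yourself flag as ``the main technical point.'' Your kernel comparison gives
\[
\Big\lVert\frac{\psi_2}{\sinh r}\Big\rVert_{L^2}\le C\,e^{\lVert(1-A_2)/\sinh r\rVert_{L^1(dr)}}\,\lVert\psi^+\rVert_{L^2},
\]
and you then bound $\lVert(1-A_2)/\sinh r\rVert_{L^1(dr)}\le\lVert\psi_2/\sinh r\rVert_{L^2}^2$. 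This is circular: setting $x=\lVert\psi_2/\sinh r\rVert_{L^2}$ and $a=\lVert\psi^+\rVert_{L^2}$ yields $x\le Ca\,e^{x^2}$, which closes by continuity only for $a$ small (below an absolute threshold depending on $C$), not for all $a<2$. The conservation law and the lower bound $A_2\ge 1-\tfrac14\lVert\psi^+\rVert_{L^2}^2$ do not rescue this, because $\int_0^\infty(1-A_2)/\sinh r\,dr$ cannot be bounded a~priori solely from $1-A_2\le\mathrm{const}$ or from $1-A_2\le\tfrac14\int_r^\infty|\psi^+|^2\sinh s\,ds$ (the $r$–integral of $1/\sinh r$ diverges at $0$).

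The paper removes the circularity by a substitution you are missing: set $G=\psi_2/(1+A_2)$. A short computation from (\ref{system of A2 psi2}) gives
\[
\Big|\frac{d}{dr}|G|-\frac{|G|}{\sinh r}\Big|\le\frac{|\psi^+|}{1+A_2},
\]
so that, integrating from infinity with exactly the weight $e^{\int_\rho^r\sinh^{-1}s\,ds}$ of (\ref{basic inequality5}), one obtains the clean pointwise bound
\[
|\psi_2|(r)\lesssim\int_r^\infty e^{\int_\rho^r\sinh^{-1}s\,ds}\,\frac{|\psi^+|(\rho)}{1+A_2(\rho)}\,d\rho\lesssim\int_r^\infty e^{\int_\rho^r\sinh^{-1}s\,ds}\,|\psi^+|(\rho)\,d\rho,
\]
since $1+A_2\ge1$. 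Now (\ref{basic inequality5}) applies with a constant independent of the solution, giving $\lVert\psi_2/\sinh r\rVert_{L^p}\lesssim\lVert\psi^+\rVert_{L^p}$ directly for all $\lVert\psi^+\rVert_{L^2}<2$. This is the device that makes (\ref{construct psi2 A2 from psi+ control}) and (i) work in the full range.

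Two further remarks. For (iii) your ``subtract and Gronwall'' sketch hides real work: the paper writes the system for $X=(\Re\,\delta\psi_2,\Im\,\delta\psi_2,\delta A_2)^T$ as $\partial_r X=\frac{1}{\sinh r}LX+BX+E$ with $L=\mathrm{diag}(1,1,2)$, splits $B=B\mathbf 1_{\ge\epsilon}+B\mathbf 1_{<\epsilon}$ so that the near-origin piece satisfies $|B_2|\ll 1/r$, and closes via (\ref{basic inequality5}) and a bootstrap; a naive Gronwall does not control the $\frac{1}{\sinh r}LX$ term near $r=0$. For (iv), your route through the compatibility relation (\ref{compatibility psi+psi-}) is legitimate (it follows automatically from the construction since $\psi^-:=\psi^+-2i\psi_2/\sinh r$ and (\ref{system of A2 psi2})) and is a reasonable alternative to the paper's direct computation of $\partial_r\psi^-$, $\partial_{rr}\psi^-$; just be careful that for $s=2$ the separate weighted bounds $\lVert\psi^+/\sinh^2 r\rVert_{L^2}$, $\lVert\partial_r\psi^+/\sinh r\rVert_{L^2}\lesssim\lVert R_+\psi^+\rVert_{H^2}$ require an additional Hardy-type argument, not merely the identity $\Delta(e^{2i\theta}\psi^+)=e^{2i\theta}(\partial_{rr}+\coth r\,\partial_r-4\sinh^{-2}r)\psi^+$.
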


\begin{proof}
We consider the ODE system (\ref{system of A2 psi2}) with boundary condition
$$\lim\limits _{r \rightarrow \infty} A_2(r)=1,\ \ \lim\limits _{r\rightarrow \infty} \psi_2 (r)=0.$$
The system and boundary condition imply $A_2^2+|\psi_2|^2=1$. We define $\psi^-= \psi^+-2i \frac{\psi_2}{\sinh r}$, $\psi_1=\psi^+-i\frac{\psi_2}{\sinh r}$, then we get $\partial_r A_2= \frac{1}{4}\sinh r(|\psi^+|^2-|\psi^-|^2)$ from (\ref{system of A2 psi2}). Since $A_2(\infty)=1$ which yields by integration from infinity
\begin{equation*}
A_2-1= \frac{1}{4}\int_r^{\infty} (|\psi^-|^2-|\psi^+|^2)\sinh s ds.
\end{equation*}
Thus we have $A_2>1-\frac{1}{4}\left\lVert\psi^+\right\rVert_{L^2}^2>0$.

To prove existence, by choosing $R$ large enough such that $\left \lVert  \psi^+  \right \rVert_{L^2(R,\infty)} \leq \epsilon$. We want to seek $\psi_2$ with the property that $\left \lVert  \psi_2  \right \rVert_{\dot{H}^1_e(R,\infty)} \lesssim \epsilon$. This implies that $|\psi_2|\lesssim \epsilon$, then we have $A_2-1 \lesssim \epsilon^2$. By the relation $A_2^2+|\psi_2|^2=1$, we get $A_2=\sqrt{1-|\psi_2|^2}$. Now we only need to consider the $\psi_2$-equation in
\begin{equation*}
X=\{\psi_2\in \dot{H}^1_e: \mbox{ } \left \lVert  \psi_2  \right \rVert_{\dot{H}^1_e }\leq 2C\epsilon \}
\end{equation*}

Rewrite the $\psi_2$ equation as
\begin{equation*}
\partial_r \psi_2 =i\psi^+ +i(A_2 -1)\psi^+ +(A_2 -1)\frac{\psi_2}{\sinh r} + \frac{\psi_2}{\sinh r},
\end{equation*}
then
\begin{equation*}
\partial_r \psi_2 - \frac{\psi_2}{\sinh r} =i\psi^+ +i(A_2 -1)\psi^+ +(A_2 -1)\frac{\psi_2}{\sinh r},
\end{equation*}
Multiply by $e^{-\int ^r_{\infty} \sinh ^{-1}sds}$ on both sides, we have
\begin{equation*}
\partial_r (e^{-\int ^r_{\infty} \sinh ^{-1}sds} \psi_2) =e^{-\int ^r_{\infty} \sinh ^{-1}sds} \big(i\psi^+ +i(A_2 -1)\psi^+ +(A_2 -1)\frac{\psi_2}{\sinh r} \big),
\end{equation*}
Integrating from infinity we obtain
\begin{equation*}
\psi_2 (r)= \int_{\infty}^r e^{\int ^r_{\rho} \sinh ^{-1}sds  } \big( i\psi^+ +i(A_2 -1)\psi^+ +(A_2 -1)\frac{\psi_2}{\sinh \rho} \big) d \rho.
\end{equation*}
Define the map $\mathcal{T}: \mbox{ }\dot{H}^1_e(R,\infty)\rightarrow \dot{H}^1_e(R,\infty)$ by
\begin{equation*}
\mathcal{T}(\psi_2) (r)= \int_{\infty}^r e^{\int ^r_{\rho} \sinh ^{-1}sds  } \big( i\psi^+ +i(A_2 -1)\psi^+ +(A_2 -1)\frac{\psi_2}{\sinh \rho} \big) d \rho.
\end{equation*}
Now it suffices to show that $\mathcal{T}$ is a contraction map in $X$. Indeed, the estimate (\ref{basic inequality5}) and Sobolev embedding lead to
\begin{align*}
\left \lVert \mathcal{T} \psi_2 \right \rVert_{\dot{H}^1_e(R,\infty)}
\leq & \left \lVert i\psi^+ +i(A_2 -1)\psi^+ +(A_2 -1)\frac{\psi_2}{\sinh r} \right \rVert_{L^2(R,\infty)} \\
 & + \left \lVert \sinh^{-1}r \int_{\infty}^r e^{\int ^r_{\rho} \sinh ^{-1}sds  } ( i\psi^+ +i(A_2 -1)\psi^+ +(A_2 -1)\frac{\psi_2}{\sinh \rho}) d \rho.  \right \rVert_{L^2(R,\infty)}, \\
\leq &    \left \lVert \psi^+  \right \rVert_{L^2(R,\infty)} + C \left \lVert \psi^+  \right \rVert_{L^2(R,\infty)} + \left \lVert \frac{|\psi_2|^2}{A_2 +1} \frac{\psi_2}{\sinh r}  \right \rVert_{L^2(R,\infty)} \\
&        +C  \left \lVert i\psi^+ +i(A_2 -1)\psi^+ +(A_2 -1)\frac{\psi_2}{\sinh r} \right \rVert_{L^2(R,\infty)},\\
\leq &  C(\left \lVert \psi^+  \right \rVert_{L^2(R,\infty)} +\left \lVert \psi_2  \right \rVert_{L^{\infty}(R,\infty)}^2 \left \lVert \psi_2 \right \rVert_{\dot{H}^1_e(R,\infty)}),\\
\leq &  C(\left \lVert \psi^+  \right \rVert_{L^2(R,\infty)} +\left \lVert \psi_2  \right \rVert_{\dot{H}^1_e(R,\infty)}^3),\\
\leq & C(\epsilon+(2C \epsilon)^3),\\
<& 2C\epsilon.
\end{align*}
And the map $\mathcal{T}$ is Lipschitz with a small Lipschitz constant,
\begin{align*}
 &\left \lVert \mathcal{T}(\psi_2)-\mathcal{T}(\tilde{\psi}_2) \right \rVert_{\dot{H}^1_e(R,\infty)}\\
=& \left \lVert  \int_{\infty}^r e^{\int ^r_{\rho} \sinh ^{-1}sds  } (i(A_2 -1)\psi^+ +(A_2 -1)\frac{\psi_2}{\sinh \rho}  -i(\tilde{A}_2-1)\psi^+  -(\tilde{A}_2-1)\frac{\tilde{\psi}_2}{\sinh \rho}) d \rho.  \right \rVert_{\dot{H}^1_e(R,\infty)},\\
\leq & C \left \lVert i(A_2 -\tilde{A}_2)\psi^+ +(A_2 -\tilde{A}_2)\frac{\psi_2}{\sinh r} +(\tilde{A}_2-1)\frac{\psi_2-\tilde{\psi}_2}{\sinh r} \right \rVert_{L^2(R,\infty)},\\
\leq & C(\left \lVert \psi^+ \right \rVert_{L^2(R,\infty)} \left\lVert \frac{|\psi_2|+|\tilde{\psi}_2|}{A_2+\tilde{A}_2}\right\rVert_{L^{\infty}(R,\infty)} \left\lVert \psi_2-\tilde{\psi}_2\right\rVert_{L^{\infty}(R,\infty)}\\
& + \left \lVert \frac{\psi_2-\tilde{\psi}_2}{\sinh r}\right \rVert_{L^2(R,\infty)}  \left \lVert (|\psi_2|+|\tilde{\psi}_2|) \psi_2 \right \rVert_{L^{\infty}(R,\infty)}+ \left \lVert \tilde{\psi}_2 \right \rVert_{L^{\infty}(R,\infty)}^2  \left \lVert \frac{\psi_2-\tilde{\psi}_2}{\sinh r}\right \rVert_{L^2(R,\infty)}),\\
\leq & C (2C\epsilon^2 +2(2C \epsilon)^2 ) \left \lVert \psi_2-\tilde{\psi}_2\right \rVert_{\dot{H}^1_e(R,\infty)},\\
\leq & 2C\epsilon  \left \lVert \psi_2-\tilde{\psi}_2\right \rVert_{\dot{H}^1_e(R,\infty)}.
\end{align*}
Therefore there exists a unique solution $\psi_2 \in \dot{H}^1_e(R,\infty) $.

Next we extend the solution to $r=0$. Consider the equation $\partial_r \psi_2 =iA_2 \psi^+ +A_2 \frac{\psi_2}{\sinh r}$ with data $\psi_2(R)$. By Duhamel formula, it suffices to consider the map
\begin{equation*}
\mathcal{J}(\psi_2)(r)=e^{\int_R^r \sinh^{-1}sds} \psi_2(R)+\int_R^r e^{\int_{\rho}^r  \sinh^{-1}s ds } (i\psi^+ +i(A_2 -1)\psi^+ +(A_2 -1)\frac{\psi_2}{\sinh \rho}) d\rho,
\end{equation*}
and the space
\begin{equation*}
Y=\{ \psi_2\in \dot{H}^1_e(R-a,R): \left \lVert \psi_2 \right \rVert _{\dot{H}^1_e(R-a,R)} <2C \epsilon \}.
\end{equation*}
Since $\frac{1}{\sinh r} e^{2\int_R^r \sinh^{-1}sds}$ is bounded, there exists $a=a(\epsilon,\mbox{ } \left \lVert \psi^+ \right \rVert_{L^2} )>0$ such that
$$ \left \lVert \frac{1}{\sinh r} e^{\int_R^r \sinh^{-1}sds} \psi_2(R) \right \rVert_{L^2(R-a,R)}<\frac{C\epsilon}{2}.$$
By (\ref{basic inequality5}), we obtain that
\begin{eqnarray*}
\left \lVert \mathcal{J}(\psi_2) \right \rVert_{\dot{H}^1_e(R-a,R)}
&\leq & \left \lVert \frac{1}{\sinh r} e^{\int_R^r \sinh^{-1}sds} \psi_2(R) \right \rVert_{L^2(R-a,R)}\\
&&  +  C\left \lVert i\psi^+ +i(A_2 -1)\psi^+ +(A_2 -1)\frac{\psi_2}{\sinh r} \right \rVert_{L^2(R-a,R)},\\
&\leq & \frac{C\epsilon}{2} +C\left \lVert \psi^+ \right \rVert_{L^2(R-a,R)}+ C\left \lVert \psi_2 \right \rVert_{L^{\infty}}^2\left \lVert \psi_2 \right \rVert_{\dot{H^1_e}(R-a,R)},\\
&\leq & \frac{C\epsilon}{2} +C\epsilon +(2C\epsilon)^3,\\
&\leq & 2C\epsilon.
\end{eqnarray*}
Meanwhile we have
\begin{eqnarray*}
&&\left \lVert \mathcal{J}(\psi_2)-\mathcal{J}(\tilde{\psi}_2) \right \rVert_{\dot{H}^1_e(R-a,R)} \\
&\leq  & C(\left \lVert i(A_2 -\tilde{A}_2)\psi^+ +(A_2 -\tilde{A}_2)\frac{\psi_2}{\sinh r} +(\tilde{A}_2-1)\frac{\psi_2- \tilde{\psi}_2}{\sinh r} \right \rVert_{L^2(R,\infty)},\\
&\leq & C\left \lVert \psi_2-\tilde{\psi}_2 \right \rVert_{\dot{H}^1_e(R-a,R)}(\left \lVert \psi^+ \right \rVert_{L^2(R-a,R)}+\left \lVert \psi_2 \right \rVert_{\dot{H}^1_e(R-a,R)}+\left \lVert \tilde{\psi}_2 \right \rVert_{\dot{H}^1_e(R,\infty)}^2,\\
&\leq & C(\epsilon+3C\epsilon)\left \lVert \psi_2-\tilde{\psi}_2 \right \rVert_{\dot{H}^1_e(R-a,R)},\\
&\leq & C' \epsilon \left \lVert \psi_2-\tilde{\psi}_2 \right \rVert_{\dot{H}^1_e(R-a,R)}.
\end{eqnarray*}
Therefore $\mathcal{J}$ is a contraction map in $Y$. Since the lifespan interval $a$ only depends on $\epsilon$ and $\left \lVert \psi^+ \right \rVert_{L^2}$, we can extend the solution to $r=0$. Thus the existence of $\psi_2$ in $[0,\infty)$ follows, and the $A_2$ is obtained by $A_2(r)=\sqrt{1-|\psi_2|^2}$.

Next we obtain the bound for (\ref{construct psi2 A2 from psi+ control}). Let $G=\frac{\psi_2}{1+A_2}$, then the system gives
\begin{equation*}
\frac{d}{dr}|G|^2=2\Re(\frac{i\psi^+}{1+A_2}\bar{G})+\frac{2|G|^2}{\sinh r},
\end{equation*}
or equivalently
\begin{equation*}
\frac{d}{dr}|G|^2-\frac{2}{\sinh r} |G|^2 =-2 \Im (\frac{\psi^+}{1+A_2}\bar{G}),
\end{equation*}
which implies
\begin{equation*}
\Big|\frac{d}{dr}|G|-\frac{|G|}{\sinh r} \Big| \leq  \frac{|\psi^+|}{1+A_2},
\end{equation*}
namely,
\begin{equation*}
\Big| \frac{d}{dr} ( e^{-\int ^r_{\infty} \sinh^{-1}s ds} |G|) \Big| \leq e^{-\int ^r_{\infty} \sinh^{-1}s ds} \frac{|\psi^+|}{1+A_2},
\end{equation*}
therefore
\begin{equation*}
|G|(r)\leq \int_r^{\infty} e^{\int ^r_{\rho} \sinh^{-1}s ds} \frac{|\psi^+|}{1+A_2}(\rho) d\rho.
\end{equation*}
Since $|A_2|\leq1$, we get
\begin{equation}                  \label{psi2 pointwise control by psi+}
|\psi_2|(r)\lesssim \int_r^{\infty} e^{\int ^r_{\rho} \sinh^{-1}s ds} \frac{|\psi^+|}{1+A_2}(\rho) d\rho.
\end{equation}
By (\ref{basic inequality5}) and $A_2>0$, (\ref{psi2 pointwise control by psi+}) gives $\left \lVert \frac{\psi_2}{\sinh r} \right \rVert_{L^2} \lesssim \left \lVert \psi^+ \right \rVert_{L^2}$. The bounds for $\left \lVert \partial_r \psi_2  \right \rVert_{L^2}$ and $\left \lVert \partial_r A_2 \right \rVert_{L^2}$ follow directly from (\ref{system of A2 psi2}). The bounds for $\left \lVert \frac{A_2-1}{\sinh r} \right \rVert_{L^2}$ and $\left \lVert \frac{A_2-1}{\sinh r} \right \rVert_{L^1(dr)}$ are obtained by the compatibility relation $A_2^2+|\psi_2|^2=1$.

Now we prove the additional properties (i)-(iv). First, we have the bound for (\ref{construct psi2 A2 from psi+ control 1}). If $\psi^+\in L^p$, by (\ref{basic inequality5}) and (\ref{psi2 pointwise control by psi+}), we obtain $\left\lVert \frac{\psi_2}{sinh r}\right\rVert_{L^p}\lesssim \left\lVert \psi^+\right\rVert_{L^p}$, then the $L^p$-bound for $\psi^-$ and $\frac{A_2 -1}{\sinh r}$ are obtained immediately by the definition of $\psi^-$ and $A_2^2+|\psi_2|^2=1$.

Second, we obtain (\ref{construct psi2 A2 from psi+ control 2}). By (\ref{system of A2 psi2}) and $A_2^2+|\psi_2|^2=1$, we have
$$\left \lVert A_2-1 \right \rVert_{\dot{H}^1_e(\R \backslash [\epsilon R^{-1},R])}+\left \lVert \partial_r \psi_2 \right \rVert_{L^2(\R \backslash [\epsilon R^{-1},R])} \lesssim \epsilon + \left \lVert \frac{\psi_2}{\sinh r} \right \rVert_{L^2(\R \backslash [\epsilon R^{-1},R])}.$$
It suffices to get the $L^2$-bound for $\frac{\psi_2}{\sinh r}$. From (\ref{psi2 pointwise control by psi+}), we have
\begin{equation*}
|\psi_2| \lesssim \int_r^{\infty} e^{\int_{\rho}^r \sinh^{-1}sds}  \textbf{1}_{(0,R^{-1}]} |\psi^+|d\rho + \int_r^{\infty} e^{\int_{\rho}^r \sinh^{-1}sds}  \textbf{1}_{(R^{-1},\infty)} |\psi^+|d\rho.
\end{equation*}
For the first term we use (\ref{basic inequality5}) and the smallness of $\left\lVert\psi^+\right\rVert_{L^2(0,R^{-1})}$. For the second term, by H\"{o}lder inequality, we have
\begin{align*}
& \left \lVert \frac{1}{\sinh r} \int_r^{\infty} e^{\int_{\rho}^r \sinh^{-1}sds}  \textbf{1}_{(R^{-1},\infty)} |\psi^+|d\rho \right \rVert_{L^2(0,\epsilon R^{-1})}\\
\leq & \left \lVert \frac{1}{\sinh r} \big( \int_{R^{-1}}^{\infty} | e^{\int_{\rho}^r \sinh^{-1}sds} \sinh^{-1} \rho |^2  \sinh \rho d\rho  \big)^{\frac{1}{2}} \left \lVert \psi^+ \right \rVert_{L^2}  \right \rVert_{L^2(0,\epsilon R^{-1})},\\
= & \big(\int_0^{\epsilon R^{-1}} \frac{1}{\sinh r} \int_{R^{-1}}^{\infty} e^{2\int_{\rho}^r \sinh^{-1}s ds } \sinh^{-1} \rho d\rho \ \  dr \big)^{\frac{1}{2}}  \left \lVert \psi^+ \right \rVert_{L^2},\\
\lesssim & \epsilon.
\end{align*}
Then by (\ref{basic inequality5}), we easily obtain
\begin{equation*}
\left \lVert \frac{\psi_2}{\sinh r} \right \rVert_{L^2(R,\infty)} \leq \left \lVert \frac{1}{\sinh r} \int_r^{\infty} e^{\int_{\rho}^r \sinh^{-1}s ds} \textbf{1}_{[R,\infty)} |\psi^+| d\rho \right \rVert_{L^2} \lesssim \epsilon.
\end{equation*}
Thus the $L^2$-bound follows.

Third, we get the Lipschitz continuity (\ref{construct psi2 A2 from psi+ control 3}). For notational convenience we denote
\begin{equation*}
\delta \psi^+=\psi^+-\tilde{\psi}^+,\ \ \delta\psi_2=\psi_2-\tilde{\psi}_2,\ \ \delta A_2=A_2-\tilde{A}_2.
\end{equation*}
Without any restriction in generality, we can make the assumption $\left\lVert \delta\psi^+\right\rVert_{L^2}\ll 1$ and the bootstrap assumption
$$\left\lVert \delta\psi_2\right\rVert_{L^{\infty}}+\left\lVert \delta A_2\right\rVert_{L^{\infty}}\lesssim \left\lVert \delta \psi^+\right\rVert_{L^2}^{\frac{1}{2}}.$$
By (\ref{system of A2 psi2}) and $A_2^2+|\psi_2|^2=1$, we derive the equations
\begin{equation*}
\left\{\begin{aligned}
&\partial_r \delta\psi_2=\frac{\delta\psi_2}{\sinh r}+i\tilde{\psi}^+\delta A_2+\frac{A_2-1}{\sinh r}\delta\psi_2+\frac{\tilde{\psi}_2}{\sinh r}\delta A_2+iA_2\delta\psi^+,\\
&\partial_r \delta A_2=\frac{2\delta A_2}{\sinh r}+\Im(\psi^+ \overline{\delta\psi_2})+\frac{2(\tilde{A}_2-1)}{\sinh r}\delta A_2+\Im(\delta\psi^+\bar{\tilde{\psi}}_2)+\frac{(\delta A_2)^2}{\sinh r}.
\end{aligned}
\right.
\end{equation*}
Since $\left\lVert \delta\psi^+\right\rVert_{L^2}\ll 1$ and $\frac{(\delta A_2)^2}{\sinh r}$ is a high order term, $ \Im(\delta\psi^+\bar{\tilde{\psi}}_2)+\frac{(\delta A_2)^2}{\sinh r}$ and $iA_2\delta\psi^+$ can be regarded as error terms. Let $X=(\Re \delta\psi^+,\ \Im\delta\psi^+,\ \delta A_2)^T$, we have
\begin{equation}          \label{X-equation}
\partial_r X=\frac{1}{\sinh r}LX+BX+E,
\end{equation}
where $L={\rm diag}\{1,\ 1,\ 2\}$,
\begin{equation*}
B=\left(\begin{matrix}
\frac{A_2-1}{\sinh r} & 0 & -\Im\tilde{\psi}^++\frac{\Re \tilde{\psi}_2}{\sinh r} \\
0 & \frac{A_2-1}{\sinh r} &  \Re\tilde{\psi}^++\frac{\Im \tilde{\psi}_2}{\sinh r} \\
 \Im\psi^+  & \Re\psi^+ & \frac{2(\tilde{A}_2-1)}{\sinh r}
\end{matrix}
\right),\ \
E=\left(\begin{matrix}
\Re(iA_2\delta\psi^+)\\
\Im(iA_2\delta\psi^+)\\
 \Im(\delta\psi^+\bar{\tilde{\psi}}_2)+\frac{(\delta A_2)^2}{\sinh r}
\end{matrix}
\right).
\end{equation*}
From (\ref{construct psi2 A2 from psi+ control 1}) we obtain the $L^2$-norm of $B$ is bounded. Then we decompose $B=B_1+B_2:=B\textbf{1}_{\geq \epsilon}(r)+B\textbf{1}_{<\epsilon}(r)$ for small $\epsilon$. By the $L^2$-bound for $B$, we have $|rB|\rightarrow 0,\ as\ r\rightarrow 0$, which gives $|B_2|\ll \frac{1}{r}$ in $(0,\ \epsilon)$. We also easily obtain $\left\lVert B_1\right\rVert_{L^1(dr)}\lesssim |\log\epsilon|^{1/2}\left\lVert B\right\rVert_{L^2}$ by H\"{o}lder inequality. Then we can construct the bounded matrix $e^{-\int_{\infty}^r B_1ds}$ such that $\partial_r e^{-\int_{\infty}^r B_1ds}=-e^{-\int_{\infty}^r B_1ds}B_1$. Hence (\ref{X-equation}) can be written as
\begin{equation*}
\partial_r(e^{-\int_{\infty}^r B_1ds}X)=\frac{1}{\sinh r}L(e^{-\int_{\infty}^r B_1ds}X)+e^{-\int_{\infty}^r B_1ds}(B_2X+E),
\end{equation*}
then
\begin{equation*}
X=e^{\int_{\infty}^r B_1ds}\int_{\infty}^r {\rm diag}(e^{\int_{\rho}^r \sinh^{-1}sds},\ e^{\int_{\rho}^r \sinh^{-1}sds},\ e^{2\int_{\rho}^r \sinh^{-1}sds}) e^{-\int_{\infty}^r B_1ds}(B_2X+E)d\rho.
\end{equation*}
By the above expression of $X$ and (\ref{basic inequality5}), we have
\begin{equation*}
\begin{aligned}
\left\lVert X\right\rVert_{L^{\infty}} \lesssim & \left\lVert B_2X\right\rVert_{L^1(dr)}+ \left\lVert \delta\psi^+ \right\rVert_{L^2}+\left\lVert \frac{X}{\sinh r}\right\rVert_{L^2}^2,\\
\lesssim & \left\lVert \delta\psi^+ \right\rVert_{L^2}+\left\lVert \frac{X}{\sinh r}\right\rVert_{L^2}+\left\lVert \frac{X}{\sinh r}\right\rVert_{L^2}^2,
\end{aligned}
\end{equation*}
and
\begin{equation*}
\begin{aligned}
\left\lVert \frac{X}{\sinh r}\right\rVert_{L^2} \lesssim  &\left\lVert B_2\sinh r\right\rVert_{L^{\infty}}\left\lVert \frac{X}{\sinh r}\right\rVert_{L^2}+\left\lVert \delta\psi^+\right\rVert_{L^2}+\left\lVert \delta A_2\right\rVert_{L^{\infty}}\left\lVert \frac{\delta A_2}{\sinh r}\right\rVert_{L^2},\\
\leq & c\left\lVert \frac{X}{\sinh r}\right\rVert_{L^2}+C(\left\lVert \delta\psi^+\right\rVert_{L^2}+\left\lVert \frac{X}{\sinh r}\right\rVert_{L^2}^2+\left\lVert \frac{X}{\sinh r}\right\rVert_{L^2}^3).
\end{aligned}
\end{equation*}
where $c\ll 1$. Hence, $\left\lVert \frac{X}{\sinh r}\right\rVert_{L^2}\lesssim \left\lVert \delta\psi^+\right\rVert_{L^2}$ and $\left\lVert X\right\rVert_{L^{\infty}}\lesssim \left\lVert \delta\psi^+\right\rVert_{L^2}$. Furthermore, by (\ref{X-equation}) we have $\left\lVert \partial_r X\right\rVert_{L^2}\lesssim \left\lVert \delta\psi^+\right\rVert_{L^2}$.

Finally we prove (iv). If $s=1$, by (\ref{system of A2 psi2}), we have
\begin{equation*}
\begin{aligned}
\partial_r \psi^-= &\partial_r \psi^+-2i\partial_r \frac{\psi_2}{\sinh r},\\
= &\partial_r \psi^++ 2A_2\frac{\psi^+}{\sinh r}-2i\frac{A_2-1}{\sinh r} \frac{\psi_2}{\sinh r}+2i\frac{\cosh r-1}{\sinh r} \frac{\psi_2}{\sinh r}
\end{aligned}
\end{equation*}
then by (\ref{construct psi2 A2 from psi+ control}), (\ref{construct psi2 A2 from psi+ control 1}) and Gagliardo-Nirenberg inequality, we obtain
\begin{equation*}
\begin{aligned}
\left\lVert \partial_r \psi^-\right\rVert_{L^2} \lesssim & \left\lVert \partial_r\psi^+\right\rVert_{L^2} + \left\lVert \frac{\psi^+}{\sinh r}\right\rVert_{L^2} +\left\lVert  \frac{\psi_2}{\sinh r}\right\rVert_{L^4}^2+\left\lVert  \frac{\psi_2}{\sinh r}\right\rVert_{L^2},\\
\lesssim & \left\lVert R_+\psi^+\right\rVert_{H^1}+ \left\lVert \psi^+\right\rVert_{L^4}^2,\\
\lesssim & \left\lVert R_+\psi^+\right\rVert_{H^1}(1+\left\lVert \psi^+\right\rVert_{L^2}).
\end{aligned}
\end{equation*}

If $s=2$, by (\ref{system of A2 psi2}) and $A_2^2+|\psi_2|^2=1$, we have
\begin{equation*}
\begin{aligned}
\partial_{rr}\psi^-= & \partial_{rr}\psi^+ + (\Im(\psi^+ \frac{\bar{\psi}_2}{\sinh r})-|\frac{\psi_2}{\sinh r}|^2)(2\psi^+-2i\frac{\psi_2}{\sinh r})\\
& +2A_2\frac{\cosh r}{\sinh r}\partial_r \psi^+ -2A_2\frac{\psi^+}{\sinh^2 r}+2A_2\frac{1-\cosh r}{\sinh r}\partial_r \psi^+ -2A_2 \frac{\cosh r -1}{\sinh^2 r}\psi^+ \\
& + (iA_2 \psi^++A_2\frac{\psi_2}{\sinh r})(-2i\frac{1}{A_2+1}|\frac{\psi_2}{\sinh r}|^2+2i\frac{\cosh r-1}{\sinh^2 r})\\
& + 4i\frac{1}{A_2+1}|\frac{\psi_2}{\sinh r}|^2 \frac{\psi_2}{\sinh r}  +4i \frac{\cosh r-1}{\sinh^2 r}(A_2 -1)\frac{\psi_2}{\sinh r}-2i\frac{(\cosh r-1)^2}{\sinh^2 r}\frac{\psi_2}{\sinh r}.
\end{aligned}
\end{equation*}
by Sobolev embedding $H^2\hookrightarrow L^6$, we get $\partial_{rr}\psi^-\in L^2$. Similarly, $\frac{\cosh r}{\sinh r}\partial_r \psi^-\in L^2$. Hence, $R_-\psi^-\in H^2$.
\end{proof}

\begin{proposition}        \label{construct u proposition}                     
Given $\psi^+\in L^2$ with $\left\lVert \psi^+\right\rVert_{L^2}<2$, then there is a unique map $u: \H^2\longrightarrow \S^2$ with the property that $\psi^+$ is the representation of $\mathcal{V}^+$ relative to a Coulomb gauge satisfying (\ref{definition of V-}) with $E(u)=\pi \left \lVert \psi^+ \right \rVert _{L^2}^2$. Moreover, the map $\psi^+ \rightarrow u$ is Lipschitz continuous in the following sense:
\begin{equation*}
\left \lVert u-\tilde{u} \right \rVert _{\dot{H}^1}\lesssim \left \lVert \psi^+-\tilde{\psi}^+ \right \rVert _{L^2}.
\end{equation*}
\end{proposition}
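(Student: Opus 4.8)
The plan is to invert the Coulomb-gauge construction of Section~4, in the spirit of \cite{BeIoKeTaS2}: from $\psi^+$ recover $(A_2,\psi_2)$ and $\psi_1$ by Lemma~\ref{construct psi2 A2 from psi+}, integrate the radial frame ODE contained in \eqref{system of u,v,w} to obtain an orthonormal frame $(\bar u,\bar v,\bar w)$, set $u=e^{\theta R}\bar u$, and then verify that $\psi^+$ is precisely the representation of $\mathcal{V}^+$ in $(v,w)$. Concretely, fix $\psi^+\in L^2$ with $\|\psi^+\|_{L^2}<2$; Lemma~\ref{construct psi2 A2 from psi+} gives $(A_2,\psi_2)$ with $\psi_2,A_2-1\in\dot{H}^1_e$, $A_2>1-\tfrac{1}{4}\|\psi^+\|_{L^2}^2>0$, $A_2^2+|\psi_2|^2=1$, the bounds \eqref{construct psi2 A2 from psi+ control}--\eqref{construct psi2 A2 from psi+ control 1}, and $A_2(0)=A_2(\infty)=1$; put $\psi_1:=\psi^+-i\psi_2/\sinh r\in L^2$. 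Working inward from $r=+\infty$, where $\psi_1\in L^1(dr)$ by Cauchy--Schwarz against the weight $\sinh r$, and then extending over $(0,\infty)$ by standard ODE theory, solve the $r$-component of \eqref{system of u,v,w} with $A_1=0$,
\[
\partial_r\bar u=(\Re\psi_1)\bar v+(\Im\psi_1)\bar w,\qquad \partial_r\bar v=-(\Re\psi_1)\bar u,\qquad \partial_r\bar w=-(\Im\psi_1)\bar u,
\]
with $(\bar u,\bar v,\bar w)(\infty)=(\vec k,\vec i,\vec j)$; since the coefficient matrix is antisymmetric the solution is an orthonormal frame for every $r>0$, and its $\bar v$-component solves \eqref{ODE of bar(v)} with data \eqref{initial data of bar v}, so the bounds \eqref{control v by u} and \eqref{Lipschitz v with u} apply. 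The decisive observation is that $\bar u_3=\vec k\cdot\bar u$ and $Z:=\vec k\cdot(\bar v+i\bar w)$ obey the closed subsystem $\partial_r\bar u_3=\Re(\psi_1\bar Z)$, $\partial_r Z=-\psi_1\bar u_3$, $(\bar u_3,Z)(\infty)=(1,0)$, which by \eqref{compatibility A2 psi2} is also solved by $(A_2,i\psi_2)$; hence $\bar u_3\equiv A_2$ and $\bar v_3+i\bar w_3\equiv i\psi_2$. Then $|\bar u-\vec k|^2=2(1-A_2)\lesssim|\psi_2|^2$, so the derivatives of $\bar v_1,\bar v_2,\bar w_1,\bar w_2$ are $\lesssim|\psi_1|\,|\psi_2|\in L^1(dr)$ near $0$, and as $|\bar v|=|\bar w|=1$ the whole frame has a limit at $r=0$.

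Set $u(r,\theta):=e^{\theta R}\bar u(r)$, $v:=e^{\theta R}\bar v$, $w:=e^{\theta R}\bar w$. Then $u:\H^2\to\S^2$ with $u(0)=u(\infty)=\vec k$, so $u\in\mathcal{E}_0$; since $|\partial_r\bar u|=|\psi_1|$ and $(\bar u_1^2+\bar u_2^2)/\sinh^2 r=|\psi_2/\sinh r|^2$, the equivariant energy formula gives $E(u)\lesssim\|\psi_1\|_{L^2}^2+\|\psi_2/\sinh r\|_{L^2}^2\lesssim\|\psi^+\|_{L^2}^2$, so $u\in\mathfrak{H}^1$. By construction the radial differentiated field of $(u,v,w)$ is $\psi_1$, the frame is Coulomb because $\partial_r\bar v\cdot\bar w=0$, and $\partial_\theta u=\vec k\times u$, so the $\theta$-connection coefficient equals $u_3=A_2$ and the $\theta$-differentiated field equals $(\vec k\times\bar u)\cdot(\bar v+i\bar w)=\bar w_3-i\bar v_3=\psi_2$. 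Therefore $\mathcal{V}^+$ from \eqref{definition of V-} is represented in $(v,w)$ by $\psi_1+i\psi_2/\sinh r=\psi^+$, and the energy identity $E(u)=\pi\|\psi^+\|_{L^2}^2$ then follows from the computation in Section~4 establishing $E(u)=\pi\|\psi^{\pm}\|_{L^2}^2$ (using $\bar u_3(\infty)=\bar u_3(0)=1$). For uniqueness: any other admissible $\tilde u$ has fields satisfying $\tilde\psi_1+i\tilde\psi_2/\sinh r=\psi^+$, $\tilde A_2=\tilde u_3$, $\tilde A_2^2+|\tilde\psi_2|^2=1$ and \eqref{compatibility A2 psi2} with data $(1,0)$ at infinity --- precisely the system of Lemma~\ref{construct psi2 A2 from psi+}, whose solution is unique --- so $(\tilde A_2,\tilde\psi_2)=(A_2,\psi_2)$, $\tilde\psi_1=\psi_1$, and $\tilde u$ solves the same frame ODE with the same data as $u$, whence $\tilde u=u$.

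For the Lipschitz bound, which is a local statement, I would assume $\|\psi^+-\tilde\psi^+\|_{L^2}\ll1$; then Lemma~\ref{construct psi2 A2 from psi+}(iii) gives $\|\psi_2-\tilde\psi_2\|_{\dot{H}^1_e}+\|A_2-\tilde A_2\|_{\dot{H}^1_e}\lesssim\|\psi^+-\tilde\psi^+\|_{L^2}$, hence also $\|\psi_1-\tilde\psi_1\|_{L^2}\lesssim\|\psi^+-\tilde\psi^+\|_{L^2}$. Writing $\delta\bar u:=\bar u-\tilde{\bar u}$, subtracting the frame ODEs and using $\vec k\times\bar u=(\Re\psi_2)\bar v+(\Im\psi_2)\bar w$ gives
\[
\partial_r\delta\bar u=\Re(\psi_1-\tilde\psi_1)\bar v+\Im(\psi_1-\tilde\psi_1)\bar w+(\Re\tilde\psi_1)(\bar v-\tilde{\bar v})+(\Im\tilde\psi_1)(\bar w-\tilde{\bar w}),
\]
\[
\vec k\times\delta\bar u=\Re(\psi_2-\tilde\psi_2)\bar v+\Im(\psi_2-\tilde\psi_2)\bar w+(\Re\tilde\psi_2)(\bar v-\tilde{\bar v})+(\Im\tilde\psi_2)(\bar w-\tilde{\bar w}).
\]
Using the frame Lipschitz bound $\|\bar v-\tilde{\bar v}\|_{C(0,\infty)}+\|\bar w-\tilde{\bar w}\|_{C(0,\infty)}\lesssim\|u-\tilde u\|_{\dot{H}^1}$ from \eqref{Lipschitz v with u}, and splitting $(0,\infty)$ into $(0,\epsilon)$, $[\epsilon,R]$, $(R,\infty)$ so that $\|\tilde\psi_1\|_{L^2}$ and $\|\tilde\psi_2/\sinh r\|_{L^2}$ are small on the two end pieces while $\sinh^{-1}r$ is bounded on the compact middle (subdividing $[\epsilon,R]$ into small subintervals exactly as in the proof of \eqref{Lipschitz v with u}), a Gronwall/bootstrap argument yields $\|\partial_r\delta\bar u\|_{L^2}+\|(\vec k\times\delta\bar u)/\sinh r\|_{L^2}\lesssim\|\psi^+-\tilde\psi^+\|_{L^2}$; since $\|u-\tilde u\|_{\dot{H}^1}\lesssim\|\partial_r\delta\bar u\|_{L^2}+\|(\vec k\times\delta\bar u)/\sinh r\|_{L^2}$, the estimate follows.

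The effort concentrates in two places. First, closing the loop --- that the frame produced by the $r$-ODE actually carries $\psi^+$ as its $\mathcal{V}^+$-data --- which hinges entirely on the decoupled $(\bar u_3,Z)$-subsystem simultaneously forcing $\bar u_3=A_2$ and the correct phase $\bar v_3+i\bar w_3=i\psi_2$; this is the ``$\psi^+$ contains all the information'' principle of \cite{BeIoKeTaS2} in the present geometry. Second, the Lipschitz estimate: the angular part of $\bar u$ is not controlled pointwise by $\delta A_2$ and $\delta\psi_2$ and has to be recovered from the frame ODE, and the coefficient of $\|u-\tilde u\|_{\dot{H}^1}$ produced by the frame-difference terms is only made small by $r$-localization near $0$ and $\infty$ together with Gronwall on the compact middle, just as in the proofs of \eqref{control v by u} and \eqref{Lipschitz v with u}.
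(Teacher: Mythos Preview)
Your existence argument and the ``closing the loop'' step (the $(\bar u_3,Z)$-subsystem forcing $\bar u_3=A_2$, $\bar w_3-i\bar v_3=\psi_2$) match the paper exactly; this is indeed the heart of the construction.

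The Lipschitz argument, however, diverges from the paper and your sketch has a circularity that is not resolved. You bound $\|\bar v-\tilde{\bar v}\|_{C}$ by $\|u-\tilde u\|_{\dot H^1}$ via \eqref{Lipschitz v with u}, producing on the right-hand side the very quantity you are estimating, with coefficient $\|\tilde\psi_1\|_{L^2}$. On the end pieces this coefficient is small, but on the compact middle $[\epsilon,R]$ it is not, and ``subdividing $[\epsilon,R]$ into small subintervals'' does not help: summing the $L^2$ norms over subintervals recovers the full $\|\tilde\psi_1\|_{L^2([\epsilon,R])}$. What does work on $[\epsilon,R]$ is a direct Gronwall on the full frame ODE for $\delta U=U-\tilde U$ (using $\tilde\psi_1\in L^1(dr)$ there since $\sinh^{-1}r$ is bounded), which yields $\|\delta U\|_{C([\epsilon,R])}\lesssim\|\delta\psi^+\|_{L^2}$ without ever passing through \eqref{Lipschitz v with u}; your parenthetical hints at this but the write-up conflates it with the use of \eqref{Lipschitz v with u}.

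The paper avoids the whole issue by a device you do not use: writing $\psi_1=-(A_2-1)\psi_1-i\partial_r\psi_2$ gives a decomposition of the coefficient matrix $M=M_1+\partial_r M_2$ with $\|M_1\|_{L^1(dr)}+\|M_2\|_{\dot H^1_e}\lesssim\|\psi^+\|_{L^2}$. Running the Picard iteration in the norm $\|U\|_{C}+\|\partial_r U\|_{L^2}$ (integrating the $\partial_r M_2$ piece by parts) then gives, uniformly in $r$ and without region-splitting, $\|U-\tilde U\|_{C}+\|\partial_r(U-\tilde U)\|_{L^2}\lesssim\|\psi^+-\tilde\psi^+\|_{L^2}$. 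The angular piece $\|(u_i-\tilde u_i)/\sinh r\|_{L^2}$ is then read off from the algebraic relations $u_1=v_2w_3-v_3w_2$ (etc.) together with $w_3-iv_3=\psi_2$, so that every occurrence of $\sinh^{-1}r$ is absorbed by $\psi_2/\sinh r\in L^2$. This is shorter and bypasses both \eqref{Lipschitz v with u} and any bootstrap.
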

\begin{proof}
Given $\left \lVert \psi^+ \right \rVert _{L^2}< 2$, by Lemma \ref{construct psi2 A2 from psi+}, there is a unique solution $(A_2,\ \psi_2)$. Let $\psi_1=\psi^+-i\frac{\psi_2}{\sinh r}$. Now we solve the system of $U=(\bar{u},\bar{v},\bar{w})^T$, that is
\begin{equation}          \label{system of bar u,v,w}
\left\{
\begin{aligned}
\partial_r \left( \begin{matrix}
\bar{u}\\
\bar{v}\\
\bar{w}
\end{matrix}
\right)
=M \left( \begin{matrix}
\bar{u}\\
\bar{v}\\
\bar{w}
\end{matrix}
\right)
:= \left( \begin{matrix}
0& \Re \psi_1 & \Im \psi_1\\
-\Re \psi_1 & 0 & 0\\
-\Im\psi_1 &\ 0 &\  0
\end{matrix}
\right)
\left( \begin{matrix}
\bar{u}\\
\bar{v}\\
\bar{w}
\end{matrix}
\right)\\
(\bar{u}(\infty),\ \bar{v}(\infty),\ \bar{w}(\infty))=(\vec{k},\ \vec{i},\ \vec{j}).
\end{aligned}
\right.
\end{equation}
Since $\psi_1=-(A_2 -1)\psi_1 -i\partial_r \psi_2$, $M$ can be rewritten as $M=M_1+\partial_r M_2$, where
\begin{equation}              \label{M1 and M2}
M_1 =\left( \begin{matrix}
0 & -\Re (A_2 -1)\psi_1 & -\Im (A_2 -1)\psi_1 \\
 \Re (A_2 -1)\psi_1 & 0 & 0 \\
 \Im (A_2 -1)\psi_1  & 0 & 0
\end{matrix}
\right), \mbox{ }
M_2 =\left( \begin{matrix}
0 &  \Im \psi_2 & -\Re \psi_2 \\
-\Im \psi_2 & 0 & 0 \\
 \Re \psi_2  & 0 & 0
\end{matrix}
\right),
\end{equation}
and by (\ref{construct psi2 A2 from psi+ control}), $\left \lVert M_1 \right \rVert _{L^1 (dr)} +\left \lVert M_2 \right \rVert _{\dot{H}^1_e} \lesssim \left \lVert \psi^+ \right \rVert _{L^2}$. If we restrict $M_1$ and $M_2$ on $[R,\infty)$ for sufficiently large $R$, we can assume $\left \lVert M_1 \right \rVert _{L^1 (dr)(R,\infty)} +\left \lVert M_2 \right \rVert _{\dot{H}^1_e(R,\infty)} \lesssim \epsilon$. This allow us to construct solutions with data at $r=\infty$ by using the iteration scheme
\begin{equation*}
U=\sum \limits_i U_i, \mbox{ } U_0=U(\infty) ,\mbox{ } U_i(r)=\int_{\infty}^r M(s) U_{i-1}ds.
\end{equation*}
Let $X=\{ U\in C([R,\infty)): \partial_r U\in L^2([R,\infty)), \lim \limits_{r\rightarrow \infty} U(r)\ \ {\rm exists} \}$. We run the iteration scheme in $X$. For $U_{i-1}\in X$, we have
\begin{equation*}
\begin{aligned}
U_i(r) =& \int_{\infty}^r M(s) U_{i-1}(s)ds,\\
= & \int_{\infty}^r M_1 U_{i-1} +\partial_r M_2 U_{i-1} ds,\\
= & \int_{\infty}^r M_1 U_{i-1}ds +M_2(r)U_{i-1}(r) -\int_{\infty}^r M_2 \partial_r U_{i-1}ds,
\end{aligned}
\end{equation*}
from which we obtain
\begin{equation*}
\begin{aligned}
\left \lVert U_i \right \rVert_{C([R,\infty))}  \leq &  \left \lVert M_1 \right \rVert_{L^1(dr)(R,\infty)} \left \lVert U_{i-1} \right \rVert_{C([R,\infty))}\\
 & +\left \lVert M_2 \right \rVert_{C([R,\infty))} \left \lVert U_{i-1} \right \rVert_{C([R,\infty))}+\left \lVert \frac{M_2}{\sinh r} \right \rVert_{L^2(R,\infty)} \left \lVert \partial_r U_{i-1} \right \rVert_{L^2(R,\infty)},\\
\lesssim & \left \lVert \psi^+ \right \rVert_{L^2(R,\infty)} (\left \lVert U_{i-1} \right \rVert_{C([R,\infty))}+ \left \lVert \partial_r U_{i-1} \right \rVert_{L^2(R,\infty)}),\\
\left \lVert \partial_r U_i \right \rVert_{L^2(R,\infty)} =& \left \lVert M_1 U_{i-1} +\partial_r M_2 U_{i-1} \right \rVert_{L^2(R,\infty)},\\
\leq & \left \lVert M_1 \right \rVert_{L^2(R,\infty)} \left \lVert U_{i-1} \right \rVert_{C([R,\infty))} + \left \lVert \partial_r M_2 \right \rVert_{L^2(R,\infty)} \left \lVert U_{i-1} \right \rVert_{C([R,\infty))},\\
\lesssim & \left \lVert \psi^+ \right \rVert_{L^2(R,\infty)} \left \lVert U_{i-1} \right \rVert_{C([R,\infty))}.
\end{aligned}
\end{equation*}
Therefore,
\begin{equation*}
\left \lVert U_i \right \rVert_{C([R,\infty))}+\left \lVert \partial_r U_i \right \rVert_{L^2([R,\infty))} \lesssim \left \lVert \psi^+ \right \rVert_{L^2([R,\infty))}^i.
\end{equation*}
Then by choosing $R$ large enough, we can use the iteration scheme to construct a solution $U$ on $[R,\infty)$.

The uniqueness of (\ref{system of bar u,v,w}) is obtained by conservation law, that is, apply $( \bar{u},\bar{v},\bar{w})  $ to both side of (\ref{system of bar u,v,w}), we have $\frac{1}{2} \partial_r ( |\bar{u}| ^2+ |\bar{v}| ^2+ |\bar{w}| ^2) =0$.

Since $\partial_r \bar{u}\cdot  \bar{v} =- \partial_r \bar{v}\cdot  \bar{u}$ by (\ref{system of bar u,v,w}), we have $\bar{u}\cdot  \bar{v}(r)= {\rm const}$, which together with $\lim\limits_{r\rightarrow\infty}U(r)=U(\infty)=(\vec{k},\vec{i},\vec{j})^T$ yields $\bar{u}\cdot\bar{v}(r)=\bar{u}\cdot  \bar{v}(\infty)=0$. Similarly, we also have $\bar{u}\cdot  \bar{w} =\bar{v}\cdot  \bar{w}=0$ and $|\bar{v}| =|\bar{w}| =1$. Thus $U$ satisfies the orthonormality condition.

Next, the solution constructed above can be extended to $(0,\infty)$. Since $\left \lVert \psi^+ \right \rVert_{L^2}<2$, then for any $\epsilon>0$, there exists $\delta>0$ such that $\left \lVert \psi^+ \right \rVert_{L^2(R-\delta,R)}<\epsilon$. Define $U_i(r)= \int_R^r MU_{i-1}ds$, denote $I=[R-\delta,\ R]$, we have
\begin{equation*}
\left \lVert U_i \right \rVert_{C(I)} \leq \left \lVert M_1 \right \rVert_{L^1(dr)(I)}\left \lVert U_{i-1} \right \rVert_{C(I)}+\left \lVert \partial_r M_2 \right \rVert_{L^2(I)}(\int_R^{R-\delta} \sh^{-1}s ds)^{1/2}\left \lVert U_{i-1} \right \rVert_{C(I)},
\end{equation*}
and
\begin{equation*}
\left \lVert \partial_r  U_i \right \rVert_{L^2(I)} \leq (\left \lVert M_1 \right \rVert_{L^2(I)}+\left \lVert \partial_r M_2 \right \rVert_{L^2(I)})\left \lVert U_{i-1} \right \rVert_{C(I)}.
\end{equation*}
By (\ref{M1 and M2}), we have $\left \lVert M_1 \right \rVert_{L^1(dr)(I)}+\left \lVert M_1 \right \rVert_{L^2(I)} + \left \lVert \partial_r M_2 \right \rVert_{L^2(I)} \lesssim \left \lVert \psi^+ \right \rVert_{L^2(I)}$, therefore,
\begin{eqnarray*}
\left \lVert U_i \right \rVert_{C(I)}+\left \lVert \partial_r U_i \right \rVert_{L^2(I)}
 & \lesssim & \left \lVert \psi^+ \right \rVert_{L^2(I)} \Big(1+(\int_R^{R-\delta} \sinh^{-1}s ds)^{1/2} \Big)  \Big(\left \lVert U_{i-1} \right \rVert_{C(I)}+\left \lVert \partial_r U_{i-1} \right \rVert_{L^2(I)} \Big),\\
 &\lesssim & \left \lVert \psi^+ \right \rVert_{L^2(I)}^i.
\end{eqnarray*}
By choosing $\delta$ small such that $(\int_R^{R-\delta} \sinh^{-1}s ds)^{1/2}$ is small, then we can still rely on iteration scheme to extend the solution to $[\epsilon R^{-1},\infty)$.

On the interval $(0,\epsilon R^{-1}]$, by (\ref{construct psi2 A2 from psi+ control 2}), we have $\left \lVert \psi_2 \right \rVert_{C((0,\epsilon R^{-1}])} \lesssim \left \lVert \psi_2 \right \rVert_{\dot{H}^1_e((0,\epsilon R^{-1}])}\lesssim \epsilon$. By a similar argument to that on $[R,\infty)$, we extend the solution to $r=0$. As a byproduct,
\begin{equation*}
\left \lVert U-U_0 \right \rVert_{C(0,\infty)}+\left \lVert \partial_r U \right \rVert_{L^2(0,\infty)} \lesssim \left \lVert \psi^+ \right \rVert_{L^2}.
\end{equation*}

From the system (\ref{system of bar u,v,w}), we know that $\bar{u}_3$ and $q= \bar{w}_3-i\bar{v}_3$ solve the system
\begin{equation*}
\left\{
\begin{aligned}
&\partial_r q =i\bar{u}_3 \psi_1, \\
&\partial_r \bar{u}_3 = \Im (\psi_1 \bar{q}).
\end{aligned}
\right.
\end{equation*}
with boundary condition $(\bar{u}_3,q)(\infty)=(1,0)$. By uniqueness, $A_2=\bar{u}_3$, $\psi_2=q$.

Next, we construct the system of $(u,v,w)$ by equivariant setup, that is, apply $(\bar{u},\bar{v},\bar{w})$ by $e^{\theta R}$. From $\psi_2=  \bar{w}_3-i\bar{v}_3$ and the orthonormality condition, (\ref{system of u,v,w}) is satisfied for $k=2$.

Given $\psi^+, \tilde{\psi}^+ \in L^2$, we construct $U$ and $\tilde{U}$ as above. From the construction it follows that
\begin{equation}               \label{U-tildeU by psi+}
\left \lVert U-\tilde{U} \right \rVert_{C(0,\infty)}+\left \lVert \partial_r (U-\tilde{U}) \right \rVert_{L^2(0,\infty)} \lesssim \left \lVert \psi^+ -\tilde{\psi}^+ \right \rVert_{L^2}.
\end{equation}
which implies $\left \lVert \partial_r (u-\tilde{u}) \right \rVert_{L^2(0,\infty)} \lesssim \left \lVert \psi^+ -\tilde{\psi}^+ \right \rVert_{L^2}$. Since $u_1= v_2 w_3 -v_3 w_2$, $\tilde{u}_1= \tilde{v}_2 \tilde{w}_3 -\tilde{v}_3 \tilde{w}_2$, $\psi_2= w_3-iv_3$, $\tilde{\psi}_2=  \tilde{w}_3 -i \tilde{v}_3$, by (\ref{U-tildeU by psi+}), we have
\begin{equation*}
\begin{aligned}
\left \lVert \frac{u_1-\tilde{u}_1}{\sinh r} \right \rVert_{L^2}
\leq & \left \lVert \frac{1}{\sinh r}[(v_2-\tilde{v}_2)\tilde{w}_3+v_2 (w_3 -\tilde{w}_3)] \right \rVert_{L^2}\\
&   +\left \lVert \frac{1}{\sinh r}[(v_3-\tilde{v}_3)w_2+\tilde{v}_3 (w_2 -\tilde{w}_2)] \right \rVert_{L^2}, \\
 \lesssim & \left \lVert U-\tilde{U} \right \rVert_{L^{\infty}} \left \lVert \frac{\tilde{\psi}_2}{\sinh r} \right \rVert_{L^2}+ \left \lVert U \right \rVert_{L^{\infty}}  \left \lVert \frac{\psi_2-\tilde{\psi}_2}{\sinh r} \right \rVert_{L^2} , \\
 \lesssim & \left \lVert \psi^+ -\tilde{\psi}^+ \right \rVert_{L^2}(\left \lVert \psi^+  \right \rVert_{L^2}+ \left \lVert \tilde{\psi}^+ \right \rVert_{L^2}).
\end{aligned}
\end{equation*}
A similar argument shows that $\left \lVert \frac{u_2-\tilde{u}_2}{\sinh r} \right \rVert_{L^2}+\left \lVert \frac{u_3-\tilde{u}_3}{\sinh r} \right \rVert_{L^2} \lesssim  \left \lVert \psi^+ -\tilde{\psi}^+ \right \rVert_{L^2}(\left \lVert \psi^+  \right \rVert_{L^2}+ \left \lVert \tilde{\psi}^+ \right \rVert_{L^2})$. Therefore, $\left \lVert u-\tilde{u} \right \rVert_{\dot{H}^1} \lesssim \left \lVert \psi^+-\tilde{\psi}^+ \right \rVert_{L^2}$.
\end{proof}

\section{The Cauchy problem}
In this section we concerned with the $(\psi^+,\ \psi^-)$-system which we recall here
\begin{equation}    \label{system}
\left\{
\begin{aligned}
&(i \partial_t +\Delta_{\H^2}-2 \frac{\cosh r -1}{\sinh ^2 r}) e^{i2 \theta} \psi^+ =   [A_0 + 2 \frac{\cosh r (A_2-1)}{\sinh^2 r}-\Im(\psi^+ \frac{\bar{\psi_2}}{\sinh r})] e^{i2\theta} \psi^+,  \\                
&(i \partial_t +\Delta_{\H^2} +2 \frac{\cosh r -1}{\sinh ^2 r}) \psi^- =   [A_0 -2\frac{\cosh r(A_2 -1)}{\sinh^2 r}+ \Im(\psi^- \frac{\bar{\psi_2}}{\sinh r})]\psi^-.   
\end{aligned}
\right.
\end{equation}
with initial data $\psi^{\pm}(t_0)=\psi^{\pm}_0$. Where $A_0$, $A_2$, $\psi_2$ are given by (\ref{A0 with psi+ psi-}), (\ref{A2 with psi+ psi-}), (\ref{psi+psi-}). Since the system (\ref{system}) arised from the Schr\"{o}dinger map (\ref{Schrodinger map}), we will show that $(\psi^+,\ \psi^-)$ satisfy the compatibility condition.

For simplicity of notations, we denote $\left\lVert f^{\pm}\right\rVert=\left\lVert f^+\right\rVert+\left\lVert f^-\right\rVert$. Since our analysis relies on $L_t^pL_x^q$-norm, we define the norm of $f$ by $\left\lVert f\right\rVert_{L_I^pL^q}:=\left\lVert f\right\rVert_{L^p(I;L^q)}$. Finally, we denote the nonlinearities by
\begin{equation*}
\begin{aligned}
&F^+(\psi^+)=[A_0 + 2 \frac{\cosh r (A_2-1)}{\sinh^2 r}-\Im(\psi^+ \frac{\bar{\psi_2}}{\sinh r})]e^{i2\theta}\psi^+,\\
&F^-(\psi^-)=[A_0 -2\frac{\cosh r(A_2 -1)}{\sinh^2 r}+ \Im(\psi^- \frac{\bar{\psi_2}}{\sinh r})]\psi^-.
\end{aligned}
\end{equation*}

\subsection{Srichartz estimates}

To understand the well-posedness of (\ref{system}), we need to obtain the Strichartz estimates. The $\psi^+$-equation in (\ref{system}) is a nonlinear Schr\"{o}dinger equation with positive and exponential decay potential. More generally, we consider the Schr\"{o}dinger equation
\begin{equation} \label{linear Schrodinger1}
\left\{
\begin{aligned}
&(i \partial_t +\Delta_{\H^2}-V)u =F, \\
&u(0)=f,
\end{aligned}
\right.
\end{equation}
where $V\in e^{-\alpha r}L^{\infty}(\H^2;\R)$ for $\alpha \geq 1$ is a positive potential. In this section we always denote potential $V$ as (\ref{linear Schrodinger1}). For simplicity, we denote $p'=\frac{p}{p-1}$ for $p\in[1,\ \infty]$. $(p,\ q)$ is called admissible pair, if
\begin{equation*}
(p,\ q) \in \Big\{(p,\ q)\in (2,\ \infty)\times(2,\ \infty):\frac{1}{p}+\frac{1}{q}=\frac{1}{2}\Big\} \cup \Big\{(\infty,\ 2)\Big\}.
\end{equation*}
Then we obtain the following Strichartz estimates.
\begin{theorem}        \label{Strichartz estimates}
Let $(p,\ q),\ (\tilde{p}',\ \tilde{q}')$ be admissible pairs, $I\subset \R$ be open interval.\\
(i)\ \ If $f\in L^2(\H^2)$, then
\begin{equation*}
\left\lVert e^{it(\Delta_{\H^2}-V)}f\right\rVert_{L^p_IL^q} \lesssim \left\lVert f\right\rVert_{L^2},
\end{equation*}
(ii)\ \ If $F\in L^{\tilde{p}}_IL^{\tilde{q}}$, then
\begin{equation*}
\left\lVert \int_0^t e^{i(t-s)(\Delta_{\H^2}-V)}F(s)ds\right\rVert_{L^p_IL^q} \lesssim \left\lVert F\right\rVert_{L^{\tilde{p}}_IL^{\tilde{q}}}.
\end{equation*}
\end{theorem}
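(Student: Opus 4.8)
The plan is to obtain the Strichartz estimates from dispersive estimates for the propagator together with the abstract machinery of Keel--Tao. Write $\mathcal{L}=\Delta_{\H^2}-V$. It suffices to prove the two bounds
\[
\|e^{it\mathcal{L}}\|_{L^2\to L^2}\lesssim 1,\qquad \|e^{it\mathcal{L}}\|_{L^1\to L^\infty}\lesssim |t|^{-1}\quad (t\neq 0).
\]
The first is immediate: since $V\ge 0$, the operator $-\mathcal{L}=-\Delta_{\H^2}+V$ is nonnegative and self-adjoint on $L^2(\H^2)$, so $\{e^{it\mathcal{L}}\}_{t\in\R}$ is a unitary group and in particular $e^{it\mathcal{L}}(e^{is\mathcal{L}})^{*}=e^{i(t-s)\mathcal{L}}$. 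Granting the second bound, the decay exponent is $\sigma=1$, for which the non-endpoint Keel--Tao theorem applies to precisely the admissible set $\{(p,q)\in(2,\infty)^2:\tfrac1p+\tfrac1q=\tfrac12\}\cup\{(\infty,2)\}$ appearing in the statement, with no $L^2_tL^\infty_x$ endpoint; this gives (i), and the inhomogeneous estimate (ii) for the two (dual) admissible pairs follows by the $TT^{*}$ argument together with the Christ--Kiselev lemma on an arbitrary open interval $I$.

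Thus the whole content is the dispersive bound. For $|t|\ge 1$ it is supplied by \cite{BoMa}, so the task reduces to the short-time estimate (\ref{dispersive estimate introduction}), i.e.\ $\|e^{it(\Delta_{\H^2}-V)}\|_{L^1\to L^\infty}\lesssim t^{-1}$ for $0<t<1$, for a nonnegative $V\in e^{-\alpha r}L^\infty(\H^2)$ with $\alpha\ge 1$ (which covers the $\psi^{+}$-equation, where $V=2(\cosh r-1)/\sinh^2 r\sim 4e^{-r}$). I would prove it by the resolvent method. Using Stone's formula, $e^{it\mathcal{L}}$ is written as a contour integral of the boundary resolvents $R_V(\lambda\pm i0)=(-\Delta_{\H^2}+V-\lambda)^{-1}$ against $e^{-it\lambda}$ over the spectrum $[\tfrac14,\infty)$. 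I then expand $R_V$ by the Birman--Schwinger/Born series: iterating $R_V=R_0-R_0VR_V$ gives, for any $N$,
\[
R_V=\sum_{n=0}^{N-1}(-1)^n R_0(VR_0)^n+(-1)^N R_0(VR_0)^{N-1}VR_V,
\]
and correspondingly $e^{it\mathcal{L}}$ splits into finitely many ``dominant'' terms built only from the free resolvent $R_0$ and $V$, plus a remainder carrying one factor of $R_V$. For the dominant terms I would use the explicit pointwise bounds on the kernel of $R_0$ on $\H^2$ from \cite{Ba} (consistent with the explicit $K^2$ kernel in Section 2) together with Lemma \ref{important lemma} to evaluate the resulting oscillatory $\lambda$-integral and recover the $t^{-1}$ decay, the exponential decay of $V$ being what makes the spatial convolutions against $R_0$-kernels converge uniformly. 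For the remainder I would invoke the meromorphic continuation and limiting-absorption bounds for $R_V$ in Lemma \ref{R_V}: since $V\in e^{-\alpha r}L^\infty$ with $\alpha\ge1$, the operator $V^{1/2}R_0V^{1/2}$ is compact with controlled behaviour up to the threshold, $\mathrm{Id}+V^{1/2}R_0V^{1/2}$ is boundedly invertible, and choosing $N$ large enough makes the accumulated decay of the remainder beat $t^{-1}$ on the unit time interval.

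The main obstacle is this short-time dispersive estimate, and within it two points demand care. First, extracting the $t^{-1}$ decay uniformly from the oscillatory integral in $\lambda$ over the full half-line $[\tfrac14,\infty)$: this is the high-frequency regime responsible for the singularity at $t=0$, and it must be handled without any spatial localization of the data, which is where the stationary-phase type Lemma \ref{important lemma} and the precise structure of the hyperbolic free kernel enter. Second, controlling the remainder of the resolvent expansion uniformly in $t$, which forces one to track $R_V(\lambda\pm i0)$ near the spectral threshold $\lambda=\tfrac14$, where $R_0$ is (logarithmically) singular; here the hypothesis $\alpha\ge1$ is used essentially, placing $V$ in the weighted spaces in which the limiting absorption principle and the meromorphic continuation of Lemma \ref{R_V} are valid. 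Once (\ref{dispersive estimate introduction}) is established, combining it with the \cite{BoMa} long-time bound yields $\|e^{it\mathcal{L}}\|_{L^1\to L^\infty}\lesssim|t|^{-1}$ for all $t\neq0$, and feeding this into Keel--Tao completes the proof.
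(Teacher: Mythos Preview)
Your proposal is correct and matches the paper's approach: reduce Strichartz to the dispersive estimate via Keel--Tao, take the long-time bound from \cite{BoMa}, and prove the short-time $|t|^{-1}$ bound by a Birman--Schwinger resolvent expansion, handling the dominant terms with the pointwise free-resolvent bounds and Lemma~\ref{important lemma} and the remainder with Lemma~\ref{R_V}. The only cosmetic difference is that the paper stops the expansion at the second iterate, $R_V=R_0-R_0VR_0+R_0VR_VVR_0$, so no ``choosing $N$ large'' is needed---one integration by parts in $\lambda$ together with the weighted $L^2$ bound on $R_V$ already gives enough decay.
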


Based on a standard theory, the above results are obtained by the following dispersive estimates immediately.

\begin{proposition}                    \label{dispersive estimate proposition}
Assume $V\in e^{-\alpha r}L^{\infty}(\H^2;\R),\ \alpha\geq 1$, is a positive potential, then we have
\begin{equation}                \label{dispersive estimate inequality}
||e^{it (\Delta_{\H^2} -V)} ||_{L^{1} \rightarrow L^{\infty}} \leq
\left\{
\begin{aligned}
C|t|^{-1}, &\ \ {\rm if}\ \mbox{ } 0<|t|<1, &\\
C|t|^{-\frac{3}{2}},&\ \ {\rm if}\  \mbox{ } |t|\geq 1. &
\end{aligned}
\right.
\end{equation}
\end{proposition}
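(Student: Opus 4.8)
The plan is to reduce to short times and then to expand the perturbed resolvent. For $|t|\ge 1$ the bound is exactly the dispersive estimate proved in \cite{BoMa}, so it suffices to establish $\|e^{it(\Delta_{\H^2}-V)}\|_{L^{1}\to L^{\infty}}\lesssim |t|^{-1}$ for $0<|t|<1$. Set $H=-\Delta_{\H^2}+V$; since $-\Delta_{\H^2}\ge\tfrac14$ and $V\ge 0$, $H$ is self-adjoint with purely absolutely continuous spectrum contained in $[\tfrac14,\infty)$, and Stone's formula gives
\begin{equation*}
e^{-itH}=\frac{1}{2\pi i}\int_{1/4}^{\infty}e^{-it\mu}\bigl[R_V(\mu+i0)-R_V(\mu-i0)\bigr]\,d\mu,\qquad R_V(z)=(H-z)^{-1}.
\end{equation*}
After the substitution $\mu=\tfrac14+\lambda^2$ this becomes an oscillatory integral $\int_{\R}e^{-it\lambda^2}(\cdots)\,d\lambda$ whose amplitude is built from the boundary values of $R_V$ on the spectrum.

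Next I would insert the Birman--Schwinger type expansion obtained by iterating the resolvent identities $R_V=R_0-R_0VR_V$ and $R_V=R_0-R_VVR_0$: for every $N\ge 0$,
\begin{equation*}
R_V=\sum_{k=0}^{2N+1}(-1)^{k}(R_0V)^{k}R_0+(R_0V)^{N+1}R_V(VR_0)^{N+1},
\end{equation*}
where $R_0=R_0(\tfrac14+\lambda^2\pm i0)$ is the free resolvent on $\H^2$, whose kernel admits the explicit description of \cite{Ba}. Plugging this into the Stone representation splits $e^{-itH}$ into a \emph{main part}, coming from the finite sum, and a \emph{remainder}. The $k=0$ term is the free propagator $e^{it\Delta_{\H^2}}$, which satisfies the $|t|^{-1}$ bound for $0<|t|<1$ directly from the explicit kernel $K^{2}(t,\rho)$ recorded in Section 2. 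For $1\le k\le 2N+1$ each term is, after writing out the Schwartz kernels, a spatial integral over several copies of $\H^2$ of a product of free resolvent kernels tied together by the potential $V$; here I would insert the pointwise bounds for the free resolvent kernel together with the exponential decay $V\in e^{-\alpha r}L^{\infty}$, $\alpha\ge 1$ — which renders all the spatial integrals absolutely convergent and uniformly controlled in $\lambda$, including near the threshold $\lambda=0$ — and then apply Lemma \ref{important lemma} to the remaining oscillatory integral in $\lambda$, obtaining the bound $\lesssim|t|^{-1}$ for each such term.

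For the remainder $(R_0V)^{N+1}R_V(VR_0)^{N+1}$ I would invoke the meromorphic continuation of $R_V$ from Lemma \ref{R_V}: once $N$ is chosen large enough (depending only on $\alpha$), the $N+1$ exponentially decaying factors of $V$ on each side provide enough decay and $\lambda$-regularity that the sandwiched operator, viewed as an $L^1\to L^{\infty}$ bounded kernel, can be differentiated in $\lambda$; one then integrates by parts against $e^{-it\lambda^2}$, paying $|t|^{-1/2}$ per integration while all $\lambda$-integrals remain finite by the kernel bounds and the exponential decay, and this yields a contribution $\lesssim|t|^{-1}$ (in fact of higher order) for $|t|<1$. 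Summing the main part and the remainder finishes the proof.

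The main obstacle I expect is the uniform control of the $\lambda$-integrals all the way down to the spectral threshold $\lambda=0$: on $\H^2$ the free resolvent kernel has a mild (logarithmic) coincidence singularity and only weak decay, so one has to combine its precise near-diagonal and near-threshold asymptotics with the exponential decay of $V$ and the oscillation $e^{-it\lambda^2}$ in exactly the right balance — this is the role of Lemma \ref{important lemma} — and one must calibrate the number of Born iterations $N$ so that the $R_V$-remainder falls within the scope of the limiting absorption / meromorphic continuation estimate of Lemma \ref{R_V}.
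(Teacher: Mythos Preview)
Your proposal is correct and follows essentially the same approach as the paper: reduce to $0<|t|<1$ via \cite{BoMa}, write the propagator through Stone's formula and a Birman--Schwinger expansion of $R_V$, treat the free term via the explicit kernel, the Born terms via Lemma~\ref{important lemma}, and the remainder via Lemma~\ref{R_V} together with the pointwise bounds on $R_0$. The only differences are cosmetic: the paper takes the minimal expansion $R_V=R_0-R_0VR_0+(R_0V)R_V(VR_0)$ (your $N=0$), so no calibration of $N$ is needed; and for the single Born term the paper does not use the pointwise resolvent bounds but rather the explicit spectral-measure formula $\Im R_0(\tfrac12+i\lambda;z,w)=c\int_r^\infty\frac{\sin\lambda s}{\sqrt{\cosh s-\cosh r}}\,ds$, performs the $\lambda$-integral first to produce the phase $e^{i(s+s')^2/(4t)}$, and then applies Lemma~\ref{important lemma} to the resulting $s$-integral.
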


By standard convention the resolvent of Laplacian $-\Delta_{\H^2}$ on $\H^2$ is written as $R_0(s)=(-\Delta_{\H^2}-s(1-s))^{-1}$ with $\Re s>\frac{1}{2}$ corresponding to the resolvent set $s(1-s)\in \C-[\frac{1}{4},\ \infty)$. The kernel of $R_0(s)$ is
\begin{equation} \label{R0}
R_0(s;z,w)=Q^0_{s-1}(\cosh r),
\end{equation}
where $Q^0_{s-1}$ is Legendre function, $r:=d(z,w)$. With the hyperbolic convention for spectral parameter, Stone's formula gives the continuous part of the spectral resolution as
\begin{equation*}
\begin{aligned}
d \Pi(\lambda) =& 2i\lambda [R_0(\frac{1}{2}+i\lambda )-R_0(\frac{1}{2}-i\lambda)]d\lambda,\\
 =& -4\lambda \Im R_0(\frac{1}{2}+i\lambda)d\lambda.
\end{aligned}
\end{equation*}
Then we use the spectral resolution to write
\begin{eqnarray*}
e^{it\Delta_{\H^2}}f(z) &=& \frac{e^{\frac{i}{4}}}{2\pi i} \int ^{+\infty}_0 \int _{\H^2} e^{it\lambda ^2} f(w) d\Pi (\lambda;z,w)dw d\lambda,\\
&=& \frac{e^{\frac{i}{4}}}{2\pi i} \int ^{+\infty}_0 \int _{\H^2} e^{it\lambda ^2}  \lambda \int ^{+\infty}_r \frac{\sin \lambda s }{\sqrt{\cosh s -\cosh r}} f(w) ds dw d \lambda.
\end{eqnarray*}
Similarly, from \cite{BoMa}, the resolvent of $-\Delta_{\H^2}+V$ for potential $V$ defined as above is given by $R_V(s)=(-\Delta_{\H^2}+V-s(1-s))^{-1}$ and the continuous component of spectral resolution is given by
\begin{equation*}
d\Pi _V (\lambda)=-4\lambda \Im R_V(\frac{1}{2}+i\lambda)d\lambda,
\end{equation*}
then the kernel of Schr\"{o}dinger propagator can be written as
\begin{equation*}
e^{it(\Delta_{\H^2}-V)}f(z) = \frac{e^{\frac{i}{4}}}{2\pi i} \int ^{+\infty}_0 \int _{\H^2} e^{it\lambda ^2} \lambda \Im R_V(\frac{1}{2}+i \lambda;z,w)f(w)dw d\lambda.
\end{equation*}
By Birman-Schwinger type resolvent expansion for all frequencies:
\begin{equation*}
R_V(s)=R_0(s)+R_0(s)[-VR_0(s)]+[R_0(s)V]R_V(s)[VR_0(s)],
\end{equation*}
we get
\begin{align}                                       \nonumber
 & e^{it(\Delta_{\H^2}-V)}f(z)\\                    \label{resolvent representation.1}
=& \frac{e^{\frac{i}{4}}}{2\pi i} \int ^{+\infty}_0 \int _{\H^2} e^{it\lambda ^2} \lambda \Im R_0(\frac{1}{2}+i \lambda;z,w)f(w)dw d\lambda\\                       \label{resolvent representation.2}
 & +\frac{e^{\frac{i}{4}}}{2\pi i} \int ^{+\infty}_0 \int _{\H^2} e^{it\lambda ^2} \lambda \Im R_0(\frac{1}{2}+i \lambda)[-V\Im R_0(\frac{1}{2}+i \lambda)]f(w)dw d\lambda\\                           \label{resolvent representation.3}
 & +\frac{e^{\frac{i}{4}}}{2\pi i}\int ^{+\infty}_0 \int _{\H^2} e^{it\lambda ^2} \lambda \Im [R_0(\frac{1}{2}+i \lambda)V]R_V(\frac{1}{2}+i \lambda)[V\Im R_0(\frac{1}{2}+i \lambda)]f(w)dw d\lambda.
\end{align}

Before proving Proposition \ref{dispersive estimate proposition}, we recall the pointwise bounds on the resolvent kernel from \cite{BoMa}. This bounds will be crucial for the dispersive estimates.
\begin{lemma}     \label{R_0 lambda <1}
For the free resolvent kernel the pointwise bounds are valid for $0\leq \lambda \leq 1$ and $r\in (0,\ \infty)$
\begin{eqnarray*}
|R_0(\frac{1}{2}+i\lambda;z,w)|\leq
\left\{
\begin{array}{lll}
C|\log r|, &r\leq 1,& \\
C\lambda^{-\frac{1}{2}}e^{-\frac{1}{2}r}, & r>1,&
\end{array}
\right.
\end{eqnarray*}
\begin{eqnarray*}
|\partial_{\lambda} R_0(\frac{1}{2}+i\lambda;z,w)|\leq
\left\{
\begin{array}{lll}
C|\log r|, &r\leq 1,& \\
C\lambda^{-\frac{1}{2}}e^{-(\frac{1}{2}-\epsilon) r}, & r>1,&
\end{array}
\right.
\end{eqnarray*}
where $r:=d(z,w)$.
\end{lemma}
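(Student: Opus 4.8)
The plan is to reduce both bounds to the classical integral representation of the Legendre function of the second kind. By \eqref{R0} we have $R_0(\frac12+i\lambda;z,w)=Q^0_{s-1}(\cosh r)$ with $s-1=-\frac12+i\lambda$, and for $\Re s>\frac12$ one has the convergent representation $Q^0_{s-1}(\cosh r)=\int_r^\infty e^{-(s-\frac12)t}\,[2(\cosh t-\cosh r)]^{-1/2}\,dt$; letting $\Re s\downarrow\frac12$ along $s=\frac12+\varepsilon+i\lambda$ (dominated convergence, since $|e^{-(s-\frac12)t}|\le 1$ and the $\varepsilon=0$ integral converges absolutely) identifies the limiting absorption resolvent as
$$R_0(\tfrac12+i\lambda;z,w)=\int_r^\infty\frac{e^{-i\lambda t}}{\sqrt{2(\cosh t-\cosh r)}}\,dt .$$
Differentiating under the integral sign (legitimate because for fixed $r>0$ the $\lambda$-derivative of the integrand is dominated by $t\,e^{-t/2}$ for $t$ large, uniformly in bounded $\lambda$) gives $\partial_\lambda R_0(\frac12+i\lambda;z,w)=-i\int_r^\infty t\,e^{-i\lambda t}\,[2(\cosh t-\cosh r)]^{-1/2}\,dt$. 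From here both estimates follow by bounding the integrand in absolute value; no oscillation is exploited, so the factor $\lambda^{-1/2}$ in the statement is automatic for $\lambda\in(0,1]$ once the non-oscillatory bound (which is $O(1)$ times an exponential) is in hand.

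For $r\le 1$ I would split the integral at $t=1$. On $(r,1)$ the product formula $\cosh t-\cosh r=2\sinh\frac{t+r}{2}\sinh\frac{t-r}{2}\ge\frac12(t^2-r^2)$ yields $\int_r^1(t^2-r^2)^{-1/2}\,dt=\cosh^{-1}(1/r)\lesssim 1+|\log r|$; on $(1,\infty)$ one has $[2(\cosh t-\cosh r)]^{-1/2}\lesssim e^{-t/2}$, contributing $O(1)$. Hence $|R_0(\frac12+i\lambda;z,w)|\lesssim 1+|\log r|$, and for the derivative the extra factor $t$ is $\le 1$ on $(r,1)$ and is absorbed into $\int_1^\infty t\,e^{-t/2}\,dt\lesssim 1$ on the tail, so $|\partial_\lambda R_0(\frac12+i\lambda;z,w)|\lesssim 1+|\log r|$ as well. (For $r$ bounded away from $0$ the quantity is plainly bounded; $C|\log r|$ is the usual shorthand for this near-diagonal logarithmic bound.)

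For $r>1$ I would substitute $t=r+u$ and use $\cosh(r+u)-\cosh r=2\sinh\frac{2r+u}{2}\sinh\frac u2$. Since $r\ge 1$, $\sinh\frac{2r+u}{2}\gtrsim e^{r+u/2}$, while $\sinh\frac u2\gtrsim u$ on $0\le u\le 1$ and $\sinh\frac u2\gtrsim e^{u/2}$ on $u\ge 1$, so $\cosh(r+u)-\cosh r\gtrsim e^{r}u$ on $(0,1)$ and $\gtrsim e^{r}e^{u}$ on $(1,\infty)$. Therefore
$$|R_0(\tfrac12+i\lambda;z,w)|\le\int_0^\infty\frac{du}{\sqrt{2(\cosh(r+u)-\cosh r)}}\lesssim e^{-r/2}\Big(\int_0^1 u^{-1/2}\,du+\int_1^\infty e^{-u/2}\,du\Big)\lesssim e^{-r/2}\le\lambda^{-1/2}e^{-r/2}$$
for $\lambda\in(0,1]$. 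The same substitution inserts a factor $r+u$ into the derivative estimate; the $(0,1)$-piece is $\lesssim r\,e^{-r/2}$ and the $(1,\infty)$-piece is $\lesssim e^{-r/2}\int_1^\infty(r+u)e^{-u/2}\,du\lesssim r\,e^{-r/2}$, so $|\partial_\lambda R_0(\frac12+i\lambda;z,w)|\lesssim r\,e^{-r/2}\lesssim_\varepsilon e^{-(\frac12-\varepsilon)r}\le\lambda^{-1/2}e^{-(\frac12-\varepsilon)r}$, the polynomial loss being absorbed into the exponential exactly as in the statement.

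The argument is essentially elementary once the integral representation is available; the only points needing care are (i) justifying the representation and the differentiation under the integral at the spectral edge $\Re s=\frac12$ — handled by the absolute convergence for real $\lambda$, equivalently by continuity from $\Re s>\frac12$ — and (ii) checking that all implicit constants are uniform in $\lambda\in[0,1]$, which holds because $\lambda$ enters only through the unimodular factor $e^{-i\lambda t}$ that is discarded. Alternatively one could simply quote the known uniform endpoint asymptotics of the conical (Mehler) functions $Q^0_{-1/2+i\lambda}(\cosh r)$ as $\cosh r\to 1^+$ and as $\cosh r\to\infty$ from standard references; the self-contained route above seems cleanest and is the one I would present.
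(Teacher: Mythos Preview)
Your proof is correct. The paper does not actually prove this lemma: it is stated there as a recall from Borthwick--Marzuola \cite{BoMa}, so there is no in-paper argument to compare against. Your route via the classical integral representation $Q^0_{-1/2+i\lambda}(\cosh r)=\int_r^\infty e^{-i\lambda t}[2(\cosh t-\cosh r)]^{-1/2}\,dt$ is the standard one, and the splits you perform (at $t\approx 1$ for small $r$, and $t=r+u$ with $u\lessgtr 1$ for large $r$) are exactly what one expects. Two very minor remarks: (i) your split at $t=1$ in the case $r\le 1$ is slightly awkward when $r$ is close to $1$, since the tail bound $[2(\cosh t-\cosh r)]^{-1/2}\lesssim e^{-t/2}$ is not uniform there; splitting instead at $t=2$, or simply noting that the kernel is bounded for $r$ in any compact subset of $(0,\infty)$, fixes this immediately --- and you already flag this with your parenthetical about ``shorthand.'' (ii) Since you never exploit the oscillation of $e^{-i\lambda t}$, your large-$r$ bounds are in fact uniform in $\lambda\in[0,1]$ (namely $\lesssim e^{-r/2}$ and $\lesssim e^{-(1/2-\varepsilon)r}$), which is strictly stronger than the stated $\lambda^{-1/2}$-bounds; this is fine, as the $\lambda^{-1/2}$ factor is only genuinely needed in the companion lemma for $\lambda\ge 1$.
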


\begin{lemma}  \label{R_0 lambda >1}
For the free resolvent kernel the pointweise bounds are valid for $  \lambda \geq 1$, and $r \in (0,\infty)$
\begin{eqnarray*}
|R_0(\frac{1}{2}+i\lambda;z,w)|\leq
\left\{
\begin{array}{lll}
C|\log r|, &\lambda r\leq 1,& \\
C\lambda^{-\frac{1}{2}}e^{-\frac{1}{2}r}, &\lambda r>1,&
\end{array}
\right.
\end{eqnarray*}
\begin{eqnarray*}
|\partial_{\lambda} R_0(\frac{1}{2}+i\lambda;z,w)|\leq
\left\{
\begin{array}{lll}
C|\log r|, &\lambda r\leq 1,& \\
C\lambda^{-\frac{1}{2}}e^{-(\frac{1}{2}-\epsilon)r}, &\lambda r>1,&
\end{array}
\right.
\end{eqnarray*}
where $r:=d(z,w)$.
\end{lemma}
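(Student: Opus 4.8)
The only analytic input is the classical integral representation of the Legendre function that already underlies the spectral formulas recalled in Section~2.3: by \eqref{R0}, with $r=d(z,w)$ and $s=\tfrac12+i\lambda$,
\[
R_0(\tfrac12+i\lambda;z,w)=Q^0_{-\frac12+i\lambda}(\cosh r)=c\int_r^\infty\frac{e^{-i\lambda t}}{\sqrt{\cosh t-\cosh r}}\,dt,
\]
the identity extending to the spectral line from the $\Re(\nu+\tfrac12)>0$ range by analytic continuation, the integral converging absolutely because the amplitude is $O((t-r)^{-1/2})$ near $t=r$ and $O(e^{-t/2})$ as $t\to\infty$; its imaginary part is, up to a constant, the kernel $\int_r^\infty\frac{\sin\lambda t}{\sqrt{\cosh t-\cosh r}}\,dt$ appearing in the Stone-formula expression for $e^{it\Delta_{\H^2}}$. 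Differentiating under the integral sign gives $\partial_\lambda R_0(\tfrac12+i\lambda;z,w)=-ic\int_r^\infty\frac{t\,e^{-i\lambda t}}{\sqrt{\cosh t-\cosh r}}\,dt$. Thus the lemma reduces to pointwise bounds, in the two regimes $\lambda r\le1$ and $\lambda r>1$, for this oscillatory integral and for the one carrying the extra weight $t$.

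In the regime $\lambda r\le1$ (hence $0<r\le 1/\lambda\le1$) the oscillation is irrelevant and crude size estimates suffice. The amplitude $(\cosh t-\cosh r)^{-1/2}$ has an integrable square-root singularity at $t=r$, contributing $O(1)$ (seen explicitly after the substitution $\tau=\sqrt{\cosh t-\cosh r}$); it is comparable to $t^{-1}$ on the range $2r\le t\le1$, because $\cosh t-\cosh r\ge\tfrac12(t^2-r^2)\gtrsim t^2$ there, contributing $\int_{2r}^1 t^{-1}\,dt\lesssim|\log r|$; and it decays like $e^{-t/2}$ for $t\ge1$, since there $\cosh t-\cosh r$ is bounded below by a constant and grows like $e^t$, contributing $O(1)$. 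Hence $|R_0(\tfrac12+i\lambda;z,w)|\lesssim 1+|\log r|$ on the relevant range $r\le1$. For $\partial_\lambda R_0$ the extra factor $t$ is $\le1$ on the first two ranges and is absorbed by $te^{-t/2}\lesssim e^{-t/2}$ on the third, so the same bound holds; this is the first alternative in the lemma.

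In the regime $\lambda r>1$ one must exploit the oscillation, and the genuine difficulty is that $(\cosh t-\cosh r)^{-1/2}$ is not differentiable at the endpoint $t=r$, so one cannot integrate by parts twice. I would change variables $v=t-r$, write $R_0(\tfrac12+i\lambda;z,w)=c\,e^{-i\lambda r}\int_0^\infty e^{-i\lambda v}A(v)\,dv$ with $A(v)=(\cosh(v+r)-\cosh r)^{-1/2}$, and use the factorisation $\cosh(v+r)-\cosh r=2\sinh\tfrac v2\,\sinh\tfrac{v+2r}2$ to get, for $r\gtrsim1$, the weighted bounds $A(v)\le Ce^{-r/2}\min\{v^{-1/2},e^{-v/4}\}$ and $|A'(v)|\le Ce^{-r/2}\min\{v^{-3/2},e^{-v/4}\}$, so that $e^{r/2}A$ behaves like an $L^1$ amplitude with only an integrable square-root singularity at the origin. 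Splitting $\int_0^\infty=\int_0^{1/\lambda}+\int_{1/\lambda}^\infty$, the first part is $\le\int_0^{1/\lambda}|A|\lesssim e^{-r/2}\lambda^{-1/2}$, while in the second part one integration by parts produces a boundary term $\lesssim\lambda^{-1}|A(1/\lambda)|\lesssim e^{-r/2}\lambda^{-1/2}$ and a remainder $\lesssim\lambda^{-1}\int_{1/\lambda}^\infty|A'|\lesssim\lambda^{-1}e^{-r/2}\lambda^{1/2}=e^{-r/2}\lambda^{-1/2}$, using $\int_{1/\lambda}^1 v^{-3/2}\,dv\asymp\lambda^{1/2}$. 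This gives $|R_0(\tfrac12+i\lambda;z,w)|\lesssim\lambda^{-1/2}e^{-r/2}$; for bounded $r$ the weight is comparable to $1$ and the same splitting yields $\lesssim\lambda^{-1/2}$. For $\partial_\lambda R_0$ write the extra factor as $t=v+r$: the $r$-part equals $r\cdot R_0$, bounded by $r\lambda^{-1/2}e^{-r/2}\le C_\varepsilon\lambda^{-1/2}e^{-(\frac12-\varepsilon)r}$, while the $v$-part carries the milder amplitude $vA(v)$ (vanishing like $v^{1/2}$, not blowing up, at the origin, still exponentially decaying), to which the same split-and-integrate-by-parts argument applies and gives $\lesssim\lambda^{-1/2}e^{-r/2}$; together these give the second alternative.

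The substance of the proof lies entirely in the $\lambda r>1$ case: one must simultaneously extract the $\lambda^{-1/2}$ gain coming from the endpoint $t=r$ (equivalently, a boundary stationary point of the phase after the substitution $\tau=\sqrt{\cosh t-\cosh r}$) and keep the weight $e^{-r/2}$ uniform in $v$, and the change of variables $v=t-r$ together with the identity $\cosh(v+r)-\cosh r=2\sinh\tfrac v2\sinh\tfrac{v+2r}2$ is what delivers both at once. Finally, Lemma~\ref{R_0 lambda <1} (the companion bound for $0\le\lambda\le1$) follows by the identical scheme, the only difference being that the transition between the logarithmic and the exponentially-decaying regime now sits at $r=1$ rather than at $r=1/\lambda$, since for $\lambda\le1$ there is no oscillatory scale below $r=1$; no oscillatory gain beyond the trivial one is needed, and the $e^{-(\frac12-\varepsilon)r}$ tail again comes directly from $(\cosh t-\cosh r)^{-1/2}\lesssim e^{-t/2}$.
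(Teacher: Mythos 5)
You should first be aware that the paper itself contains no proof of this lemma: it is quoted from Borthwick--Marzuola \cite{BoMa} (``we recall the pointwise bounds on the resolvent kernel from \cite{BoMa}''), so you are supplying an argument where the paper has only a citation. Your route --- the representation $R_0(\tfrac12+i\lambda;z,w)=c\int_r^\infty e^{-i\lambda t}(\cosh t-\cosh r)^{-1/2}\,dt$ following from \eqref{R0}, crude absolute bounds when $\lambda r\le 1$, and for $\lambda r>1$ the substitution $v=t-r$ with $\cosh(v+r)-\cosh r=2\sinh\tfrac v2\sinh\tfrac{v+2r}2$, a split at $v=1/\lambda$, and one integration by parts --- is the standard argument for such conical-function bounds and is essentially the one in \cite{BoMa}; it is carried out correctly in the regime $\lambda r\le 1$ and in the regime $\lambda r>1$ with $r\gtrsim 1$.

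There is, however, a genuine gap in the sub-regime $1/\lambda<r\lesssim 1$. Your own estimates give $A(v)\lesssim (v\sinh r)^{-1/2}$ near $v=0$, so the split-and-integrate-by-parts scheme produces $|R_0|\lesssim\lambda^{-1/2}(\sinh r)^{-1/2}$, and the sentence ``for bounded $r$ the weight is comparable to $1$'' silently replaces $(\sinh r)^{-1/2}$ by a constant --- legitimate only for $r\ge 1$. This cannot be repaired within the proof, because the leading contribution is genuinely of that size: for $\lambda r\gg 1$ one has
\begin{equation*}
\int_0^\infty e^{-i\lambda v}A(v)\,dv=(\sinh r)^{-1/2}\int_0^\infty v^{-1/2}e^{-i\lambda v}\,dv+\text{l.o.t.}=\sqrt{\pi}\,e^{-i\pi/4}(\lambda\sinh r)^{-1/2}+\text{l.o.t.},
\end{equation*}
so $|R_0|\asymp(\lambda\sinh r)^{-1/2}$; taking $r=\lambda^{-1/2}$ gives $|R_0|\asymp\lambda^{-1/4}\gg\lambda^{-1/2}$, and the second alternative as printed is therefore false for small $r$. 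What your argument actually establishes is $|R_0|\lesssim\lambda^{-1/2}(\sinh r)^{-1/2}$ (equivalently $(\lambda r)^{-1/2}$ for $r\le1$ and $\lambda^{-1/2}e^{-r/2}$ for $r\ge1$), and you should either prove that corrected bound or restrict the exponential alternative to $r>1$ and keep $C(1+|\log r|)$ for $r\le 1$, as in the companion Lemma~\ref{R_0 lambda <1}; the same correction is needed for $\partial_\lambda R_0$, where your decomposition $t=v+r$ is otherwise fine. Two smaller points: the claimed bounds $A(v)\le Ce^{-r/2}\min\{v^{-1/2},e^{-v/4}\}$ should be a product rather than a minimum (as written the minimum equals $e^{-v/4}$ near $v=0$, which $A$ violates), and the first alternative comes out as $C(1+|\log r|)$ rather than $C|\log r|$, which is harmless but worth recording since $|\log r|$ vanishes at $r=1$.
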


We also recall the meromorphic continuation from \cite{BoMa}.
\begin{lemma}  \label{R_V}
For $V\in e^{-\alpha r}L^{\infty}(\H^2)$ with $\alpha>0$, the resolvent $R_V(s)$ admits a meromorphic continuation to the half-plane $\Re s>\frac{1}{2}-\delta$ as a bounded operator
\begin{equation*}
R_V(s): e^{-\delta r}L^2(\H^2) \rightarrow e^{\delta r}L^2(\H^2)
\end{equation*}
for $\delta <\frac{\alpha}{2}$. And there exists a constant $M_V$ such that for all $\lambda \in \R$ with $|\lambda|\geq M_V$,
\begin{equation*}
||e^{-\frac{\alpha}{2}r}\partial_{\lambda}^q R_V(\frac{1}{2}+i\lambda)e^{-\frac{\alpha}{2}r}||_{L^2\rightarrow L^2} \leq C_{q,\alpha} |\lambda|^{-1}.
\end{equation*}
If the $R_V(\frac{1}{2}+i\lambda)$ has no pole at $\lambda=0$, we can extend the estimate through $\lambda=0$ to give
\begin{equation*}
||e^{-\frac{\alpha}{2}r}\partial_{\lambda}^q R_V(\frac{1}{2}+i\lambda)e^{-\frac{\alpha}{2}r}||_{L^2\rightarrow L^2} \leq C_{q,\alpha} \langle \lambda \rangle ^{-1}.
\end{equation*}
\end{lemma}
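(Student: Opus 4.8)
The plan is to follow the scheme of \cite{BoMa}, based on analytic Fredholm theory applied to a Birman--Schwinger operator. First I would factor the potential as $V=V_1V_2$ with $V_2=|V|^{1/2}$ and $V_1=|V|^{1/2}\,\mathrm{sgn}\,V$, so that both factors lie in $e^{-\frac{\alpha}{2}r}L^{\infty}(\H^2)$, and record the identity (valid as an identity of bounded operators for $\Re s>\frac12$, then to be continued)
\[
R_V(s)=R_0(s)-R_0(s)V_1\bigl(\mathrm{Id}+M(s)\bigr)^{-1}V_2R_0(s),\qquad M(s):=V_2R_0(s)V_1 .
\]
The input I would take for granted is that the free resolvent kernel $R_0(s;z,w)=Q^0_{s-1}(\cosh d(z,w))$ extends meromorphically in $s$ to all of $\C$, with no poles in $\{\Re s>\frac12-\delta\}$ except off the spectrum, together with the pointwise estimates of Lemma~\ref{R_0 lambda <1} and Lemma~\ref{R_0 lambda >1}. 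Combining these with Schur's test and the weight $e^{-2\delta r}$, one checks that $e^{-\delta r}R_0(s)e^{-\delta r}$ is bounded on $L^2(\H^2)$ and analytic on $\{\Re s>\frac12-\delta\}$ for every $\delta<\frac{\alpha}{2}$: the $e^{-2\delta r}$ decay dominates the worst-case $e^{-(\frac12-\delta)r}$ growth of the kernel at large $r$, while the logarithmic near-diagonal singularity is locally $L^2$.

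Next I would show that $M(s)=V_2R_0(s)V_1$ is, on the strip $\{\Re s>\frac12-\delta\}$ with $\delta<\frac{\alpha}{2}$, an analytic family of Hilbert--Schmidt (hence compact) operators on $L^2(\H^2)$, the exponential decay $e^{-\alpha r}$ supplied by $V$ comfortably absorbing the weighted kernel bound. Since $\|M(s)\|_{L^2\to L^2}\to0$ as $\Re s\to+\infty$, the analytic Fredholm theorem then gives that $(\mathrm{Id}+M(s))^{-1}$ is a meromorphic operator family on the strip with finite-rank principal parts at its poles; substituting this back into the displayed identity and using the boundedness of $e^{-\delta r}R_0(s)e^{-\delta r}$ together with the decay of $V_1,V_2$ yields the asserted meromorphic continuation of $R_V(s)$ as a bounded map $e^{-\delta r}L^2(\H^2)\to e^{\delta r}L^2(\H^2)$.

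For the quantitative bounds on $s=\frac12+i\lambda$ I would proceed as follows: first, using Lemma~\ref{R_0 lambda <1}, Lemma~\ref{R_0 lambda >1} and their differentiated versions together with a limiting-absorption-type estimate, obtain $\|e^{-\frac{\alpha}{2}r}\partial_\lambda^q R_0(\tfrac12+i\lambda)e^{-\frac{\alpha}{2}r}\|_{L^2\to L^2}\le C_{q,\alpha}|\lambda|^{-1}$ and $\|\partial_\lambda^q M(\tfrac12+i\lambda)\|_{L^2\to L^2}\to0$ as $|\lambda|\to\infty$; then fix $M_V$ so that $\|M(\tfrac12+i\lambda)\|_{L^2\to L^2}\le\frac12$ for $|\lambda|\ge M_V$, so that $(\mathrm{Id}+M)^{-1}=\sum_{n\ge0}(-M)^n$ converges in operator norm with Leibniz control of its $\lambda$-derivatives; finally, insert this Neumann series into the resolvent identity and estimate the two outer factors $e^{-\frac{\alpha}{2}r}R_0$ by the weighted resolvent bound, which supplies the gain $|\lambda|^{-1}$. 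This gives $\|e^{-\frac{\alpha}{2}r}\partial_\lambda^q R_V(\tfrac12+i\lambda)e^{-\frac{\alpha}{2}r}\|_{L^2\to L^2}\le C_{q,\alpha}|\lambda|^{-1}$ for $|\lambda|\ge M_V$; and if $\lambda=0$ is not a pole, the continuation is analytic on a neighbourhood of the whole real axis, so the finite bound on $|\lambda|\le M_V$ merges with the high-frequency bound to give the $\langle\lambda\rangle^{-1}$ estimate. The hard part will be upgrading the pointwise free-resolvent kernel estimates of Lemmas~\ref{R_0 lambda <1}--\ref{R_0 lambda >1} to the \emph{sharp} weighted operator-norm bound $\lesssim|\lambda|^{-1}$, uniformly in the number $q$ of $\lambda$-derivatives: a crude Schur test only yields $|\lambda|^{-1/2}$, so one must exploit the oscillation of $Q^0_{-1/2+i\lambda}(\cosh r)$ (Kato smoothing / stationary phase), and it is precisely this step, together with the borderline role of $\alpha=1$ in the large-$r$ integration, that requires care.
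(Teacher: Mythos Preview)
The paper does not give its own proof of this lemma: it is stated with the preface ``We also recall the meromorphic continuation from \cite{BoMa}'' and no argument is supplied. Your proposal is therefore not competing with a proof in the paper but rather sketching the Borthwick--Marzuola argument that the paper cites; since you explicitly announce that you follow \cite{BoMa}, the approaches coincide by construction.

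Your outline is a faithful summary of that scheme (factor $V=V_1V_2$, set $M(s)=V_2R_0(s)V_1$, apply analytic Fredholm on the weighted spaces, then Neumann-expand for $|\lambda|$ large). Your closing caveat is well placed: the passage from the pointwise bounds of Lemmas~\ref{R_0 lambda <1}--\ref{R_0 lambda >1} to the sharp weighted $L^2\!\to\!L^2$ decay $\lesssim|\lambda|^{-1}$ is exactly where \cite{BoMa} invokes a limiting-absorption/Kato-smoothing argument rather than a naive Schur test, so if you were to write this out in full you would need to reproduce that step rather than only the kernel estimates.
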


In order to prove Proposition \ref{dispersive estimate proposition}, we also need the following lemma.
\begin{lemma}   \label{important lemma}                             
\begin{equation} \label{lemma 4.5}
\int_r^{\infty} \frac{e^{i \frac{(s+a)^2}{4t}}(s+a)}{\sqrt{\cosh s- \cosh r}}ds  \lesssim
\left\{
\begin{aligned}
\sqrt{t} \sqrt{\frac{r+a}{\sinh r}},\ \  r\geq  \frac{\sqrt{t}}{t},\\
\sqrt{t}(1+\frac{a}{r}),\ \  r < \frac{\sqrt{t}}{2}.
\end{aligned}
\right.
\end{equation}
\end{lemma}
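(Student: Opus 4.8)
The plan is to treat the two regimes separately, in each one splitting the integral $I$ on the left of (\ref{lemma 4.5}) at a well-chosen point, estimating the piece near $s=r$ by absolute value and the tail by a single integration by parts. Write $\phi(s)=\tfrac{(s+a)^2}{4t}$; I will use $a\ge 0$ (as arises in the resolvent expansion) and $t<1$. The elementary facts I would set up first are: the two lower bounds $\cosh s-\cosh r\ge(s-r)\sinh r$ and, via $\sinh x\ge x$, $\cosh s-\cosh r=2\sinh\tfrac{s+r}{2}\sinh\tfrac{s-r}{2}\ge\tfrac12 s(s-r)$; the identity $(s+a)e^{i\phi(s)}=\tfrac{2t}{i}\tfrac{d}{ds}e^{i\phi(s)}$, which is why the factor $s+a$ in the numerator helps rather than hurts; and the antiderivative $\int\tfrac{\sinh s}{(\cosh s-\cosh r)^{3/2}}\,ds=-2(\cosh s-\cosh r)^{-1/2}$.

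For $r\ge 1/\sqrt t$ (so $r>1$) I would put $\delta:=t/(r+a)$, which satisfies $\delta\le r+a$, and split $I=\int_r^{r+\delta}+\int_{r+\delta}^{\infty}$. The first piece is bounded, using $\cosh s-\cosh r\ge(s-r)\sinh r$ and $s+a\lesssim r+a$, by $\tfrac{r+a}{\sqrt{\sinh r}}\int_r^{r+\delta}(s-r)^{-1/2}\,ds\lesssim (r+a)\sqrt\delta/\sqrt{\sinh r}$. For the second piece I would use the identity above and integrate by parts once: the boundary term at $\infty$ vanishes, while the boundary term at $r+\delta$ and the remaining integral (controlled by the antiderivative) are both $O\!\big(t\,(\cosh(r+\delta)-\cosh r)^{-1/2}\big)=O\!\big(t/\sqrt{\delta\sinh r}\big)$. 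Summing, $|I|\lesssim\big((r+a)\sqrt\delta+t/\sqrt\delta\big)/\sqrt{\sinh r}$, and $\delta=t/(r+a)$ gives exactly $\sqrt t\,\sqrt{(r+a)/\sinh r}$.

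For $r<\sqrt t/2$ (so $r<1/2$ and $\sinh s\simeq s$ on the relevant range) I would instead split at the fixed scale $s=\sqrt t$, the scale at which $\phi$ passes from $O(1)$ to a genuinely oscillatory phase. On $[r,\sqrt t]$ I would discard the oscillation: using $\cosh s-\cosh r\ge\tfrac12 s(s-r)$ and substituting $s=r+u$,
\[
\Big|\int_r^{\sqrt t}\Big|\lesssim\int_r^{\sqrt t}\sqrt{\frac{s}{s-r}}\,ds+a\int_r^{\sqrt t}\frac{ds}{\sqrt{s(s-r)}}\lesssim\big(\sqrt t+\sqrt{r\sqrt t}\big)+\frac{a\,t^{1/4}}{\sqrt r}\lesssim\sqrt t\Big(1+\frac ar\Big),
\]
the last step using $r<\sqrt t$ (so $\sqrt{r\sqrt t}\lesssim\sqrt t$ and $t^{1/4}/\sqrt r\le\sqrt t/r$). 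On $[\sqrt t,\infty)$ the amplitude $\tfrac{s+a}{\sqrt{\cosh s-\cosh r}}$ is no longer singular, so the same single integration by parts, now with no further splitting, yields boundary and error terms $\lesssim t\,(\cosh\sqrt t-\cosh r)^{-1/2}$; since $r<\sqrt t/2$ gives $\cosh\sqrt t-\cosh r\ge\tfrac12\sqrt t(\sqrt t-r)\ge\tfrac14 t$, this is $\lesssim\sqrt t$. Adding the two pieces gives $|I|\lesssim\sqrt t(1+a/r)$.

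I expect the delicate point to be precisely the choice of cutting scale in the second regime. If one mimics the first regime and cuts at $\delta=t/(r+a)$, then balancing the crude near-estimate against the tail's boundary term $t\,(\cosh(r+\delta)-\cosh r)^{-1/2}$ forces $\delta$ of order $t/(r+a)$, which when both $r$ and $a$ are much smaller than $\sqrt t$ exceeds $r$, and then the near-estimate loses a power and the argument collapses. The remedy is to refuse to optimize and cut at the intrinsic scale $s\simeq\sqrt t$: the near part then needs no oscillation at all, the singular factor $(s-r)^{-1/2}$ contributes only $\sqrt{\sqrt t-r}\le t^{1/4}$, and this combines with the $(\sinh s)^{-1/2}\simeq r^{-1/2}$ coming from $s\approx r$ to produce exactly the allowed factor $\sqrt t/r$, while the tail is automatically regular and costs only one integration by parts. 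One must also keep the $a$-dependence in the near part sharp, so that it surfaces as $a/r$ and not as a logarithm.
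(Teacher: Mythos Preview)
Your proof is correct and follows essentially the same strategy as the paper's: in both regimes one splits the integral into a near piece bounded by absolute value and a tail handled by a single integration by parts via $(s+a)e^{i\phi}=\tfrac{2t}{i}(e^{i\phi})'$, and your splitting points (at $r+t/(r+a)$ in Case~1, at $s=\sqrt t$ in Case~2) coincide with the paper's after its changes of variable, your two-piece split in Case~2 being a mild streamlining of the paper's three pieces $J_1,J_2,J_3$. One remark: the threshold ``$r\ge\sqrt t/t$'' in the lemma's statement is a typo for $r\ge\sqrt t/2$ (as the paper's own proof makes clear), and your Case~1 argument in fact works unchanged down to this correct threshold, since the only place you invoke a lower bound on $r$ is to ensure $\delta=t/(r+a)\lesssim r+a$, which already follows from $(r+a)^2\gtrsim t$.
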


\begin{proof}
The proof roughly follows the approach in \cite{Ba}. Before proving the lemma, we recall two useful estimates, that is,
\begin{equation}            \label{lemma 4.5 useful estimate 1}
\frac{1}{\cosh s-\cosh \rho} \leq \frac{c}{(s-\rho)\sqrt{\cosh \rho}} \leq \frac{c}{s-\rho},\ \ \ {\rm for}\ s>\rho \geq 0,
\end{equation}
and
\begin{equation}            \label{lemma 4.5 useful estimate 2}
\frac{1}{\cosh s-\cosh \rho}\leq \frac{c}{\sqrt{(s-\rho)\sinh \rho}},\ \ \ {\rm for}\ s>\rho >0.
\end{equation}

Case 1: $r \geq \frac{\sqrt{t}}{2}$.

Let $s=\frac{\tau t}{r+a}+r$, then
\begin{align}                               
{\rm LHS}(\ref{lemma 4.5}) =& \displaystyle \int_0^{\infty} \frac{e^{\frac{i}{4t} (\frac{\tau ^2 t^2}{(r+a)^2}  +(r+a)^2 +\tau t)}    (\frac{\tau t}{r+a}  +r+a)}{\sqrt{ \cosh (\frac{\tau t}{r+a} +r)  -\cosh r}}   \frac{t}{r+a}  d \tau, \nonumber \\           \label{lemma 4.5  3}
=& 2t e^{i\frac{(r+a)^2}{4t}} \displaystyle \int_0^{\infty}  \frac{e^{i(\frac{\tau ^2 t}{4(r+a)^2} +\frac{\tau}{2})} (\frac{\tau t}{2(r+a)^2}  +\frac{1}{2})}{\sqrt{ \cosh (\frac{\tau t}{r+a} +r)  -\cosh r} }  d\tau.
\end{align}
Denote $\Phi(\tau):=\frac{\tau^2 t}{4(r+a)^2}+\frac{\tau}{2}$, then (\ref{lemma 4.5  3}) can be written as
\begin{equation}                                   \label{lemma 4.5  4}
2\sqrt{t} e^{i\frac{(r+a)^2}{4t}} \sqrt{\frac{r+a}{\sinh r}} \displaystyle \int_0^{\infty}  \frac{e^{i\Phi(\tau)} \Phi '(\tau)}{\sqrt{\frac{r+a}{t\sinh r} (\cosh (\frac{\tau t}{r+a} +r)  -\cosh r}) }  d\tau
\end{equation}
Since $\int_0^{\infty}= \int_0^1+\int_1^{\infty}$, (\ref{lemma 4.5  4}) can be split into
\begin{equation}     \label{lemma 4.5 split}
2\sqrt{t} e^{i\frac{(r+a)^2}{4t}} \sqrt{\frac{r+a}{\sinh r}}(I_1+I_2).
\end{equation}
For $I_1$, by (\ref{lemma 4.5 useful estimate 2}) and $r\geq \frac{\sqrt{t}}{2}$ we have  
\begin{align*}                                  
I_1 \lesssim &  \int_0^1 \frac{\frac{\tau t}{2(r+a)^2}+\frac{1}{2}}{\sqrt{\frac{r+a}{t\sinh r} \frac{\tau t}{r+a} \sinh r}} d\tau, \\
 \lesssim &  \int _0^1 \frac{1}{\sqrt{\tau}}d\tau, \\
\lesssim & 1 .
\end{align*}
For $I_2$, Let                                                     
\begin{equation}                      \label{lemma 4.5 alpha(tau)}
\alpha (\tau)=[\frac{r+a}{t\sinh r}(\cosh(\frac{\tau t}{r+a} +r)-\cosh r)]^{-\frac{1}{2}}.
\end{equation}
By integrating by parts in $I_2$, we get
\begin{align*}                                 
I_2  = &  \int_1^{\infty} \partial_{\tau} e^{i\Phi(\tau)} \alpha(\tau)d\tau, \\
 \leq  & |e^{i\Phi(\tau)}\alpha(\tau)|_1^{\infty}|+|\int _1^{\infty}e^{i\Phi(\tau)} \alpha '(\tau)| d\tau ,\\
\lesssim & \sup\limits_{\tau \geq 1} |\alpha(\tau) |+ \int_1^{\infty} |\alpha '(\tau)|d\tau.
\end{align*}
Notice that $\alpha (1)\lesssim 1$ and $\alpha '(\tau)<0$ by (\ref{lemma 4.5 alpha(tau)}), hence,
\begin{equation*}
|I_2|\lesssim |\alpha(1)| \lesssim 1.
\end{equation*}
That is $I_1$ and $I_2$ are bounded. Therefore (\ref{lemma 4.5}) follows (\ref{lemma 4.5 split}) in the region $r \geq \frac{\sqrt{t}}{2}$.

Case 2: $r <\frac{\sqrt{t}}{2}$.

Let us split the left hand side of (\ref{lemma 4.5}) into three parts:
\begin{equation*}
{\rm LHS}(\ref{lemma 4.5})=\int_{r+a}^{2r+a} +\int_{2r+a}^{\sqrt{t}+a} +\int_{\sqrt{t}+a}^{\infty}=:J_1+J_2+J_3.
\end{equation*}
For $J_1$, we assume $r>0$, otherwise $J_1=0$ immediately, then                                  
\begin{align*}
J_1  \lesssim & \int_{r+a}^{2r+a} \frac{u}{\sqrt{\cosh (u-a)-\cosh r}} du,\\
     \lesssim & \int_{r+a}^{2r+a} \frac{u}{\sqrt{(u-a-r)\sinh r}} du,\\
    \lesssim &  (2r+a)\sqrt{\frac{r}{\sinh r}}.
\end{align*}
Since we are in the case $r <\frac{\sqrt{t}}{2}$, we get that
\begin{equation*}
|J_1| \lesssim \sqrt{t}+a.
\end{equation*}
For $J_2$, by (\ref{lemma 4.5 useful estimate 1}) we have                                             
\begin{align*}
J_2  \lesssim & \int_{2r+a}^{\sqrt{t}+a} \frac{u}{\sqrt{\cosh (u-a)-\cosh r}} du,\\
     \lesssim & \int_{2r+a}^{\sqrt{t}+a} \frac{u}{u-a-r} du,\\
    = &  \int_{2r+a}^{\sqrt{t}+a} 1+\frac{a+r}{u-a-r} du,\\
    \lesssim & \sqrt{t}+\frac{a}{r}\sqrt{t}.\\
\end{align*}
For $J_3$, let $u=\sqrt{t}\tau +a$, we get that                             
\begin{equation*}
J_3   =   \int_1^{\infty} \frac{e^{\frac{i}{4t}(t \tau^2 +a^2+2\sqrt{t}\tau)}(\sqrt{t}\tau+a)}{\sqrt{\cosh (\sqrt{t}\tau) -\cosh r}} \sqrt{t} d\tau
      =     2te^{\frac{ia^2}{4t}}\int_1^{\infty} \frac{e^{i(\frac{\tau^2}{4}+\frac{a \tau}{2\sqrt{t}})} (\frac{\tau}{2}+\frac{a}{2\sqrt{t}})  }{\sqrt{\cosh(\sqrt{t} \tau)-\cosh r}}   d\tau.
\end{equation*}
Then $J_3$ can be written as
\begin{equation*}
J_3  =   2te^{\frac{ia^2}{4t}}\int_1^{\infty}  e^{i\psi(\tau)}  \psi '(\tau)  \beta (\tau) d\tau,
\end{equation*}
where $\psi(\tau)=\frac{\tau^2}{4}+\frac{a \tau}{2\sqrt{t}}$ and $\beta (\tau)= [\cosh(\sqrt{t} \tau)-\cosh r]^{-\frac{1}{2}}$.
By integration by parts, we get
\begin{equation*}
|J_3| \leq 2t(|\beta (1)|+\int_1^{\infty} |\beta '(\tau)|d\tau).
\end{equation*}
Since the derivative of $\beta$ is negative, we obtain
\begin{equation*}
|J_3| \leq 4t|\beta(1)| \lesssim t(\cosh \sqrt{t}  -\cosh r)^{-\frac{1}{2}} \lesssim t (\sqrt{t}-r)^{-1} \lesssim \sqrt{t}.
\end{equation*}
Therefore, we have
\begin{equation*}
{\rm LHS}(\ref{lemma 4.5}) \lesssim \sqrt{t} +a+\frac{a}{r}\sqrt{t} \lesssim \sqrt{t} +\frac{a}{r}\sqrt{t},
\end{equation*}
in the region $r< \frac{\sqrt{t}}{2}$.
\end{proof}

\begin{proof}[Proof of Proposition \ref{dispersive estimate proposition}]
The estimate for $|t|\geq 1$ in (\ref{dispersive estimate inequality}) has been proved in \cite{BoMa}, we only prove the case $0<|t|<1$ here. In order to estimate $e^{it(\Delta_{\H^2}-V)}$, it suffices to bound (\ref{resolvent representation.1})-(\ref{resolvent representation.3}) respectively.
(\ref{resolvent representation.1}) is indeed $e^{it\Delta_{\H^2}}f$, which can be estimated in \cite{AP}. To estimate (\ref{resolvent representation.2}), we rewrite it by (\ref{R0}) as
\begin{align}                               
 & \int_0^{\infty} e^{-it \lambda^2} \lambda \int_{z_0,z_1}V(z_1) \int_{r_0}^{\infty} \int_{r_1}^{\infty} \frac{\sin \lambda (s+s')}{\sqrt{\cosh s-\cosh r_1} \sqrt{\cosh s'-\cosh r_0} }   ds' ds f(z_0)dz_0 dz_1d \lambda  ,    \nonumber   \\                                  \label{thm 4.1 second term 1}
=&  t^{-\frac{3}{2}} \int_{z_0,z_1}V(z_1)\int_{r_0}^{\infty} \int_{r_1}^{\infty} \frac{e^{i\frac{(s+s')^2}{4t}} (s+s')}{\sqrt{\cosh s-\cosh r_1} \sqrt{\cosh s'-\cosh r_0} }  ds'ds f(z_0)dz_0 dz_1 .
\end{align}                                                         
By Lemma \ref{important lemma}, since $\sqrt{r_1+s} \leq \sqrt{r_1}+\sqrt{s}$ for $r_1\geq 0$ and $s\geq 0$ we get
\begin{align}                                         \label{thm 4.1 second term 2}
 &  \int_{r_0}^{\infty} \int_{r_1}^{\infty} \frac{e^{i\frac{(s+s')^2}{4t}} (s+s')}{\sqrt{\cosh s-\cosh r_1} \sqrt{\cosh s'-\cosh r_0} }  ds'ds\\
\lesssim & t^{1/2} \int_{r_0}^{\infty} \frac{\sqrt{\frac{r_1 +s}{\sinh r_1}}  1_{\geq \frac{\sqrt{t}}{2}}(r_1)  +(1+\frac{s}{r_1}) 1_{<\frac{\sqrt{t}}{2}}(r_1) }{\sqrt{\cosh s- \cosh r_0}} ds,\nonumber \\
\lesssim & t^{1/2} \int_{r_0}^{\infty} \frac{ \frac{\sqrt{r_1}+\sqrt{s}}{\sqrt{\sinh r_1}} 1_{\geq \frac{\sqrt{t}}{2}}(r_1)  +(1+\frac{s}{r_1}) 1_{<\frac{\sqrt{t}}{2}}(r_1) }{\sqrt{\cosh s- \cosh r_0}} ds, \nonumber \\               \label{thm 4.1 second term I}
\lesssim & t^{1/2} \int_{r_0}^{\infty} \frac{1}{\sqrt{\cosh s -\cosh r_0}} ds (\sqrt{\frac{r_1}{\sinh r_1}}1_{\geq \frac{\sqrt{t}}{2}}(r_1)+1_{<\frac{\sqrt{t}}{2}(r_1)})\\                         \label{thm 4.1 second term II}
    &        +t^{1/2}  \int_{r_0}^{\infty} \frac{\sqrt{s}}{\sqrt{\cosh s -\cosh r_0}} ds \frac{1}{\sqrt{\sinh r_1}} 1_{\geq \frac{\sqrt{t}}{2}}(r_1)\\                                                       \label{thm 4.1 second term III}
     &        +t^{1/2}    \int_{r_0}^{\infty} \frac{s}{\sqrt{\cosh s -\cosh r_0}} ds \frac{1}{ r_1} 1_{< \frac{\sqrt{t}}{2}}(r_1).
\end{align}
It suffices to estimate the three integrals the right hand side. By (16) of \cite{AP}, for $r_0>0$ we have
\begin{equation}             \label{thm 4.1 second term 3}
\begin{aligned}
\int_{r_0}^{\infty} \frac{1}{\sqrt{\cosh s -\cosh r_0}} ds
\lesssim &  \int_{r_0}^{r_0+1}  \frac{1}{\sqrt{(s-r_0)\sinh r_0}} ds  + \int_{r_0+1}^{\infty} e^{-\frac{s}{2}}ds,\\
\lesssim &  \frac{1}{\sqrt{\sinh r_0}}.
\end{aligned}
\end{equation}                                                     
For the second integral, we make the change of variables $u=s-r_0$,
\begin{equation}   \label{thm 4.1 second term 4}
\int_{r_0}^{\infty} \frac{\sqrt{s}}{\sqrt{\cosh s -\cosh r_0}} ds
= \int_{0}^{\infty}  \frac{\sqrt{u+r_0}}{\sqrt{\cosh (u+r_0)-\cosh r_0}} du.
\end{equation}
If $r_0=0$,
\begin{equation*}
(\ref{thm 4.1 second term 4})=  \int_0^{\infty} \frac{\sqrt{u}}{\sqrt{\cosh u-1}} du \lesssim \int_0^1 \frac{\sqrt{u}}{u}du + \int_1^{\infty} \frac{\sqrt{u}}{\sqrt{\cosh u}} du < \infty.
\end{equation*}
If $r_0>0$, by (\ref{thm 4.1 second term 3}) we have
\begin{align*}
(\ref{thm 4.1 second term 4})  \lesssim  & \int_{r_0}^{\infty} \frac{\sqrt{s-r_0}+\sqrt{r_0}}{\sqrt{\cosh s- \cosh r_0}}ds  ,\\
 \lesssim &  \int_0^{\infty} \frac{\sqrt{u}}{\sqrt{\cosh (u+r_0)-\cosh r_0}}du +\int_{r_0}^{\infty} \frac{\sqrt{r_0}}{\sqrt{\cosh s- \cosh r_0}}ds,\\
\lesssim &   \int_0^{\infty} \frac{\sqrt{u}}{\sqrt{(\cosh u-1)\cosh r_0}}du + \frac{1}{\sqrt{\sinh r_0}},\\
\lesssim &  \frac{1}{\sqrt{\cosh r_0}}+ \frac{\sqrt{r_0}}{\sqrt{\sinh r_0}}.
\end{align*}
The third integral can be estimated similar to the second one. If $r_0=0$,
\begin{equation*}
\int_{r_0}^{\infty} \frac{s}{\sqrt{\cosh s-\cosh r_0}}ds = \int_0^{\infty} \frac{s}{\sqrt{\cosh s-1}}ds<\infty.
\end{equation*}
If $r_0>0$,
\begin{align*}
\int_{r_0}^{\infty} \frac{s}{\sqrt{\cosh s-\cosh r_0}}ds
= &     \int_{r_0}^{\infty}\frac{s-r_0}{\sqrt{\cosh s-\cosh r_0}}ds +\int_{r_0}^{\infty}\frac{r_0}{\sqrt{\cosh s-\cosh r_0}}ds,\\
\lesssim &    \frac{1}{\sqrt{\cosh r_0}}+\frac{r_0}{\sqrt{\sinh r_0}}.
\end{align*}
In conclusion, we obtained
\begin{align*}
(\ref{thm 4.1 second term 2}) \lesssim & t^{1/2} \big[ \frac{1}{\sqrt{\sinh r_0}} (\sqrt{\frac{r_1}{\sinh r_1}}1_{\geq \frac{\sqrt{t}}{2}}(r_1)+1_{<\frac{\sqrt{t}}{2}(r_1)})\\
 &        +\frac{1}{\sqrt{\cosh r_0}} \frac{1}{\sqrt{\sinh r_1}} 1_{\geq \frac{\sqrt{t}}{2}}(r_1)+\frac{1}{\sqrt{\cosh r_0}} \frac{1}{ r_1} 1_{< \frac{\sqrt{t}}{2}}(r_1) \big],\\
\lesssim & t^{1/2} \big[ ( \frac{1}{\sqrt{\sinh r_0}}  \sqrt{\frac{r_1}{\sinh r_1}}  +  \frac{1}{\sqrt{\cosh r_0}} \frac{1}{\sqrt{\sinh r_1}}  )1_{\geq \frac{\sqrt{t}}{2}}(r_1)    \\
&+( \frac{1}{\sqrt{\sinh r_0}} +   \frac{1}{\sqrt{\cosh r_0}} \frac{1}{ r_1}       )      1_{<\frac{\sqrt{t}}{2}}(r_1) \big].
\end{align*}                                             
Therefore,
\begin{align*}
(\ref{thm 4.1 second term 1}) \lesssim &  t^{-1}\int_{z_0,z_1}V(z_1)[ ( \frac{1}{\sqrt{\sinh r_0}}  \sqrt{\frac{r_1}{\sinh r_1}}  +  \frac{1}{\sqrt{\cosh r_0}} \frac{1}{\sqrt{\sinh r_1}}  )1_{\geq \frac{\sqrt{t}}{2}}(r_1)   \\
&   +( \frac{1}{\sqrt{\sinh r_0}} +   \frac{1}{\sqrt{\cosh r_0}} \frac{1}{ r_1}       )      1_{<\frac{\sqrt{t}}{2}}(r_1)]g(z_0) d z_0 d z_1 , \\
\lesssim &  |t|^{-1} \left \lVert g \right \rVert _{L^1}.
\end{align*}

Finally, we estimate the (\ref{resolvent representation.3}). By duality, it suffices to prove for $\left \lVert h \right \rVert_{L^1}=1$,
\begin{equation}      \label{thm 4.1 third term 1}
\langle \int_0^{\infty} e^{it \lambda^2} \lambda \int_{\H^2}  \Im [R_0 V]R_V [VR_0]g(w)dwd\lambda,h \rangle \lesssim |t|^{-1}\left \lVert g \right \rVert _{L^1}.
\end{equation}
write
\begin{align*}
A(g)(z)=\int   V(z)R_0(-\frac{1}{2}+i\lambda;z,z_0)g(z_0)dz_0,\\
B(g)(z)=\int   V(z)\partial_{\lambda} R_0(-\frac{1}{2}+i\lambda;z,z_0)g(z_0)dz_0.
\end{align*}
Then by integration by parts and Lemma \ref{important lemma}, we have
\begin{align*}
(\ref{thm 4.1 third term 1})
= &   \int_0^{\infty} e^{it \lambda^2} \lambda \int  \Im R_0(-\frac{1}{2}+i\lambda; y_0,y_1) V(y_1) R_V(-\frac{1}{2}+i\lambda; y_1,z_1) \\
  &   V(z_1)R_0(-\frac{1}{2}+i\lambda; z_1,z_0)g(z_0)dz_0dz_1dy_1h(y_0)dy_0d\lambda,\\
= &   \int_0^{\infty} e^{it \lambda^2} \lambda \int  R_V(-\frac{1}{2}+i\lambda; y,z) V(z)R_0(-\frac{1}{2}+i\lambda);z,z_0)g(z_0)dz_0dz \\
  &  \int V(y) R_0(y,y_0)h(y_0) dy_0dy d\lambda ,\\
=& |t|^{-1}\int_0^{\infty} e^{it \lambda^2} [\int \partial_{\lambda} R_V(y,z)A(g)(z)dz A(h)(y)dy   \\
&   +\int R_V(y,z)B(g)(z)dz A(h)(y)dy \\
&   +\int R_V(y,z) A(g)(z)dzB(h)(y)dy ]   d\lambda,\\
\lesssim & |t|^{-1} \int_0^{\infty}  \langle  \lambda  \rangle ^{-1}  \Big(  \left \lVert   e^{\frac{1}{4}|z|} A(g)(z)  \right \rVert _{L^2}  \left \lVert   e^{\frac{1}{4}|y|} A(h)(y)  \right \rVert _{L^2} \\
   &   +      \left \lVert   e^{\frac{1}{4}|z|}B(g)(z)  \right \rVert _{L^2}  \left \lVert  e^{\frac{1}{4}|y|} A(h)(y)  \right \rVert _{L^2} \\
   &  + \left \lVert e^{\frac{1}{4}|z|} A(g)(z) \right \rVert _{L^2}  \left \lVert   e^{\frac{1}{4}|y|} B(h)(y)  \right \rVert _{L^2} \Big) d\lambda .
\end{align*}
By Lemma \ref{R_0 lambda <1} and Lemma \ref{R_0 lambda >1}, Young's inequality and Holder's inequality, we have
\begin{align*}
 \left \lVert   e^{\frac{1}{4}|z|} A(g)(z)  \right \rVert _{L^2}
 \lesssim &   \left \lVert \int  e^{\frac{1}{4}|z|} V(z)(|\log r| 1_{\leq 1}(r)+e^{-\frac{1}{2}r} 1_{>1}(r) )1_{\leq 1}(\lambda) g(z_0) dz_0 \right \rVert _{L^2}  \\
    &        + \left \lVert \int  e^{\frac{1}{4}|z|} V(z)(|\log r| 1_{\leq 1}(\lambda r)+\lambda^{-\frac{1}{2}} e^{-\frac{1}{2}r} 1_{>1}(\lambda r) ) 1_{>1}(\lambda) g(z_0) dz_0 \right \rVert _{L^2},\\
 \lesssim &  1_{\leq 1}(\lambda) \left \lVert  g  \right \rVert _{L^1} (\left \lVert  |\log r| 1_{\leq 1 }(r)  \right \rVert _{L^2}  +\left \lVert  e^{-\frac{1}{2}r} 1_{>1}(r)  \right \rVert _{L^{\infty}}        )    \\
    &     +   1_{> 1}(\lambda) \left \lVert  g  \right \rVert _{L^1} (\left \lVert  |\log r| 1_{\leq 1 }(\lambda r)  \right \rVert _{L^2}  +\left \lVert \lambda^{-\frac{1}{2}}  e^{-\frac{1}{2}r} 1_{>1}(\lambda r)  \right \rVert _{L^{\infty}}        )  ,\\
 \lesssim &  (1_{\leq 1}(\lambda)+1_{>1}(\lambda) \lambda^{-\frac{1}{2}}) \left \lVert  g  \right \rVert _{L^1} , \\
 \lesssim &  \langle \lambda \rangle ^{-\frac{1}{2}} \left \lVert  g  \right \rVert _{L^1}.
\end{align*}
Similarly, we have
\begin{equation*}
\left \lVert   e^{\frac{1}{4}|z|}B(g)(z)  \right \rVert _{L^2} \lesssim \langle \lambda \rangle ^{-\frac{1}{2}} \left \lVert  g  \right \rVert _{L^1}.
\end{equation*}
Therefore,
\begin{equation*}
(\ref{resolvent representation.3}) \lesssim \sup_{\left \lVert h \right \rVert_{L^1}=1} |t|^{-1} \int_0^{\infty} \langle \lambda \rangle ^{-\frac{3}{2}}  \left \lVert   g  \right \rVert _{L^1}  \left \lVert   h \right \rVert _{L^1}  d\lambda \lesssim |t|^{-1} \left \lVert   g  \right \rVert _{L^1}.
\end{equation*}
Thus Proposition \ref{dispersive estimate proposition} follows.
\end{proof}
\subsection{The Cauchy theory}
Here we consider the Cauchy problem for (\ref{system}). The local well-posedness of (\ref{system}) is directly by Strichartz estimates in Theorem \ref{Strichartz estimates}. Then for small initial data, since the operator $-\Delta_{\H^2}-2\frac{\cosh r-1}{\sinh^2 r}$ has discrete spectrum, we use perturbation method (see \cite{TaoViZh}) to prove global well-posedness.
\begin{theorem}                          \label{Cauchy result}                             
Consider the problem (\ref{system}) with data $\left\lVert\psi_0^{\pm}\right\rVert_{L^2}<2$, where $A_0$, $A_2$, $\psi_2$ are given by (\ref{A0 with psi+ psi-}), (\ref{A2 with psi+ psi-}), (\ref{psi+psi-}). Then there exists a unique maximal-lifespan solution pair $(\psi^+, \psi^-): I \times \R^2 \longrightarrow \C \times \C$ with $t_0 \in I$ and $\psi^{\pm}(t_0)=\psi^{\pm}_0$ with the following additional properties:\\
(i)  If $\left\lVert\psi_0^{\pm}\right\rVert_{L^2}<2$, then there exists $T=T(\psi^{\pm}_0)>0$, and a unique solution $(\psi^+,\psi^-)$
of the system in the time interval $[-T,T]$ with $(\psi^+,\psi^-)\in L^4([-T,T];L^4)\cap C([-T,T];L^2)$.\\
(ii)  If $(\psi^{+},\psi^-):(T_0,T_1)\times R^+ \longrightarrow \C \times \C $, $|T_1-T_0|<\infty$, is a solution to (\ref{system}) with $\left \lVert\psi^{\pm} \right \rVert_{L^4L^4(T_0,T_1)} < \infty $, then $\psi^{\pm}$ can be extended to a solution on a larger time interval.\\
(iii)  There exists $\epsilon >0$ such that $\left \lVert   \psi^{\pm}_0 \right \rVert _{L^2}\leq \epsilon $, then for any compact interval $J\subset \R$, (\ref{system}) has a unique global solution $\psi^{\pm}(t)\in L^4(J;L^4)\cap C(J;L^2)$, moreover, $\left \lVert   \psi^{\pm} \right \rVert _{L^4_J L^4} \lesssim C(J,\ \left \lVert   \psi^{\pm}_0 \right \rVert _{L^2})$.\\
(iv)  For every $A>0$, and $\epsilon >0$, there is $\delta>0$ such that if $\psi^{\pm}$ is a solution satisfying $\left \lVert \psi^{\pm} \right \rVert_{L^4_I L^4}\leq A$ and $M(\psi^{\pm}_0-\tilde{\psi}^{\pm}_0) \leq \delta$, then there exists a solution such that $\left \lVert \psi^{\pm} -\tilde{\psi}^{\pm} \right \rVert_{L^4_I L^4}\leq \epsilon$, and $M(\psi^{\pm}-\tilde{\psi}^{\pm}) \leq \epsilon$, $\forall\ t\in I$. \\
(v)  Assume that $R_{\pm}\psi^{\pm}_0 \in H^s$, for $s\in {1,\ 2}$. If $\left \lVert \psi^{\pm} \right \rVert_{L^4_I L^4}\leq M$, then the solution $\psi^{\pm}$ satisfies
\begin{equation}        \label{regularity thm}
\left \lVert R_{\pm} \psi^{\pm} \right \rVert_{H^s}\lesssim_M  \left \lVert R_{\pm} \psi^{\pm}_0 \right \rVert_{H^s}+1,\ \ \ \forall\ \ t\in I,
\end{equation}
and it has Lipschitz dependence with respect to the initial data.
\end{theorem}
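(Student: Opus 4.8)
The plan is to run a Duhamel/Strichartz fixed–point scheme for the pair $(\psi^+,\psi^-)$, exploiting that the two equations in (\ref{system}) have linear parts of very different natures. Writing $V:=2\frac{\cosh r-1}{\sinh^2 r}=\frac{2}{\cosh r+1}$, the function $R_+\psi^+=e^{i2\theta}\psi^+$ solves $(i\partial_t+\Delta_{\H^2}-V)R_+\psi^+=F^+$, and since $0\le V\le 4e^{-r}$ we have $V\in e^{-r}L^\infty(\H^2)$ with $\alpha=1$, so Theorem \ref{Strichartz estimates} provides \emph{global-in-time} Strichartz estimates for $e^{it(\Delta_{\H^2}-V)}$. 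For $\psi^-$ the relevant potential is $-V$ and $-\Delta_{\H^2}-V$ has an eigenvalue at $0$, hence no global dispersive decay; instead I move $V\psi^-$ to the right-hand side and drive the $\psi^-$-equation by the free propagator $e^{it\Delta_{\H^2}}$, whose Strichartz estimates on $\H^2$ are classical (the $V=0$ case of Proposition \ref{dispersive estimate proposition}, cf.\ \cite{AP}). In two dimensions the admissible pair $(4,4)$ serves as the working norm, with dual $(4/3,4/3)$, while $(\infty,2)$ (dual $(1,2)$) controls the $C_tL^2$-norm.

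The nonlinearities $F^\pm$ are, up to constants, of mass-critical cubic type: $\psi_2/\sinh r=(\psi^+-\psi^-)/2i$, and the Schur-test bounds (\ref{basic inequality2}), (\ref{basic inequality3}) of Lemma \ref{basic inequality} applied to (\ref{A0 with psi+ psi-}), (\ref{A2 with psi+ psi-}) give $\|A_0\|_{L^p}+\|\tfrac{\cosh r(A_2-1)}{\sinh^2 r}\|_{L^p}\lesssim\|\psi^+\|_{L^{2p}}^2+\|\psi^-\|_{L^{2p}}^2$. Hence, with $N_I:=\|\psi^+\|_{L^4_IL^4}+\|\psi^-\|_{L^4_IL^4}$,
\[
\|F^\pm\|_{L^{4/3}_IL^{4/3}}\lesssim N_I^3,\qquad \|V\psi^-\|_{L^{4/3}_IL^{4/3}}\lesssim|I|^{3/4}\|\psi^-\|_{L^\infty_IL^2},
\]
together with the corresponding Lipschitz/difference versions (the nonlocal coefficients are handled verbatim since the operators in Lemma \ref{basic inequality} are bounded on $L^p$). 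For (i) one runs the Duhamel map on $C([-T,T];L^2)\cap L^4([-T,T];L^4)$ for both components, choosing $T$ so small that $\|e^{it(\Delta_{\H^2}-V)}(e^{i2\theta}\psi_0^+)\|_{L^4_{[-T,T]}L^4}$, $\|e^{it\Delta_{\H^2}}\psi_0^-\|_{L^4_{[-T,T]}L^4}$ and $T^{3/4}$ are small (the first two by finiteness of the $L^4_\R L^4$-norms via global Strichartz); the estimates above close a contraction, and uniqueness in the whole class follows by comparing two solutions on short subintervals where both have small $L^4L^4$-norm. A key point for the rest: the bracketed coefficients in $F^\pm$ are real-valued and $-\Delta_{\H^2}\pm V$ are self-adjoint, so $\|\psi^+\|_{L^2}$ and $\|\psi^-\|_{L^2}$ are each conserved; in particular they remain $<2$, so the local theory can be reapplied at any time, and (ii) is the usual blow-up criterion (if $\|\psi^\pm\|_{L^4_{(T_0,T_1)}L^4}<\infty$ with $|T_1-T_0|<\infty$ then $\psi^\pm\in C([T_0,T_1];L^2)$ and one restarts from $\psi^\pm(T_1)$).

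For (iii), fix $\epsilon\le\epsilon_0$ and split the compact interval $J$ into $N\simeq|J|$ subintervals of length $\le1$. On each, mass conservation keeps the incoming $L^2$-data $\le\epsilon$, so global Strichartz bounds the homogeneous evolution by $\lesssim\epsilon$ (no smallness of the subinterval is used here), the source term by $\lesssim\epsilon$, and a bootstrap gives $\|\psi^\pm\|_{L^4_{J_k}L^4}\lesssim\epsilon$; summing fourth powers, $\|\psi^\pm\|_{L^4_JL^4}\lesssim N^{1/4}\epsilon=C(J,\|\psi_0^\pm\|_{L^2})$. The dependence on $J$ is genuine, being exactly the $N^{1/4}$ forced by treating $V\psi^-$ perturbatively (as anticipated in Remark 1.3). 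For (iv) I would use the standard long-time perturbation scheme (cf.\ \cite{TaoViZh,BeIoKeTaS2}): partition $I$ into $\lesssim(A/\eta)^4$ subintervals with $\|\psi^\pm\|_{L^4L^4}\le\eta$, derive the equations for $w^\pm:=\tilde\psi^\pm-\psi^\pm$, and use the Lipschitz cubic estimate plus the smallness of $\eta$ and of $|I_j|^{3/4}$ to propagate $\|w^\pm\|_{L^4_{I_j}L^4}+\|w^\pm\|_{L^\infty_{I_j}L^2}$ from one subinterval to the next; choosing $\delta$ small (it shrinks through the finitely many steps) yields both the existence of $\tilde\psi^\pm$ on all of $I$ and the asserted closeness.

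For (v): since $V$ and $\partial_r V$ are bounded, the Sobolev norms of order $s\in\{1,2\}$ defined by $-\Delta_{\H^2}+V$ and by $-\Delta_{\H^2}$ are equivalent, so Theorem \ref{Strichartz estimates} upgrades to the $H^s$-level. Differentiating the Duhamel formulas once (twice for $s=2$) and using the fractional Leibniz rule, the bound $\|\psi^\pm\|_{L^2}<2$, the relations (\ref{compatibility A2 psi2}) for $\partial_rA_2,\partial_r\psi_2$, and the Hardy-type bounds of Lemma \ref{basic inequality} for the nonlocal coefficients, one bounds the resulting source terms in $L^{4/3}_IL^{4/3}$ by $\|\psi^\pm\|_{L^4_IL^4}^2$ times an order-$s$ Strichartz norm of $R_\pm\psi^\pm$, plus terms of lower order in derivatives; partitioning $I$ into finitely many (depending on $M$) subintervals with $\|\psi^\pm\|_{L^4L^4}\le\eta$ and bootstrapping on each at the $H^s$-level — the potential source contributing $\lesssim|I_j|^{3/4}\|R_-\psi^-\|_{L^\infty_{I_j}H^s}$ — gives (\ref{regularity thm}) after finitely many steps, the $+1$ absorbing the bounded contributions of $\|\psi_0^\pm\|_{L^2}$ and $V$; Lipschitz dependence follows by running the same scheme on the difference equation at $H^s$-regularity. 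The main obstacle is organizational rather than analytic: because the $\psi^-$-operator lacks global dispersive decay, every estimate one would like uniform in time must be assembled by chopping the interval into finitely many short pieces and carefully propagating the conserved/bounded quantities (mass, then $L^4L^4$, then $H^s$) across them, with the cubic smallness (short $L^4L^4$-norm) and the $|I_j|^{3/4}$ potential smallness arranged to close simultaneously in both equations; for (v) the extra care lies in keeping the fractional-derivative estimates on the nonlocal coefficients $A_0,A_2-1,\psi_2$ consistent with the exponential decay of $V$.
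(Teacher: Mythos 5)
Your treatment of (i)--(iv) follows essentially the same route as the paper: a Duhamel/Strichartz contraction with the potential kept in the $\psi^+$-propagator and moved to the source for $\psi^-$, mass conservation from the real-valuedness of the bracketed coefficients, and an interval-splitting bootstrap for (iii) whose constant grows with $|J|$. Your minor variations are all legitimate (smallness of the linear flow on short intervals by monotone convergence of the finite global $L^4_tL^4_x$-norm rather than by density of smooth data; Hölder giving $\|V\psi^-\|_{L^{4/3}L^{4/3}}\lesssim |I|^{3/4}\|V\|_{L^4}\|\psi^-\|_{L^\infty L^2}$ rather than the paper's $\|V\|_{L^2_TL^2}\|\psi^-\|_{L^4_TL^4}$; a direct bootstrap in (iii) rather than the paper's comparison with an auxiliary globally defined solution of the potential-free system).

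Part (v), however, has a genuine gap. The hard term in $F^\pm$ is $2\frac{\cosh r(A_2-1)}{\sinh^2 r}R_\pm\psi^\pm$ with $A_2-1=\frac14\int_0^r(|\psi^+|^2-|\psi^-|^2)\sinh s\,ds$. When $(-\Delta)^{1/2}$ (or $-\Delta$) falls on this nonlocal coefficient, the naive Leibniz expansion you invoke produces, near $r=0$, terms of the schematic form
\begin{equation*}
\frac{1}{r}\,(|\psi^+|^2-|\psi^-|^2)\,R_\pm\psi^\pm
\qquad\text{and}\qquad
\frac{1}{r}\cdot\frac{1}{r^2}\int_0^r(|\psi^+|^2-|\psi^-|^2)\,s\,ds\cdot R_\pm\psi^\pm .
\end{equation*}
Lemma \ref{basic inequality} bounds the averaging operators $\frac{1}{\sinh^2 r}\int_0^r(\cdot)\sinh s\,ds$ on $L^p$, but it does \emph{not} bound the extra factor $\frac{1}{r}$; absorbing it would require a Hardy inequality $\|\psi^-/r\|_{L^p}\lesssim\|\psi^-\|_{W^{1,p}}$, which fails in two dimensions for the \emph{radial} function $R_-\psi^-=\psi^-$ (it is only available for the $2$-equivariant $R_+\psi^+=e^{2i\theta}\psi^+$). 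So "fractional Leibniz plus Lemma \ref{basic inequality}" does not close the $L^1_{I_j}L^2$ estimate of the differentiated source. The paper's proof circumvents exactly this: it factors the coefficient as a smooth bounded weight $\varphi(r)\frac{r^2\cosh r}{\sinh^2 r}$ (which has no $1/r$ singularity) times the Euclidean average $\frac{1}{r^2}\int_0^r g\,s\,ds$, and proves the Littlewood--Paley commutation estimate (\ref{frequency localization}), $\|(-\Delta_{\R^2})^{s/2}\frac{1}{r^2}\int_0^r g\,s\,ds\|_{L^p}\lesssim\|(-\Delta_{\R^2})^{s/2}g\|_{L^p}$, so that all derivatives land on the density $g=(|\psi^+|^2-|\psi^-|^2)\frac{\sinh s}{s}\varphi$ and produce only $\partial_r\psi^\pm\cdot\psi^\pm$-type terms controlled by the Strichartz norms. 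Some such mechanism (or an equivalent cancellation exploiting $|\psi^+|^2-|\psi^-|^2=\frac{4}{\sinh r}\Im(\psi_1\bar\psi_2)$ together with quantitative vanishing of $\psi_2$ at the origin) is indispensable for (\ref{regularity thm}); without it your $H^s$ bootstrap does not close.
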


\begin{proof}
(i) Consider the system (\ref{system of psi+ and psi- version}) in the space
$$X=\big\{(\psi^+,\psi^-)\in C([0,T];L^2) \cap L^4_TL^4: \left \lVert \psi^{\pm} \right \rVert_{C([0,T];L^2)}\leq 2 \left \lVert \psi^{\pm}_0 \right \rVert_{L^2} ,\left \lVert \psi^{\pm} \right \rVert_{L^4_TL^4}\leq \epsilon \big\}.$$
Given the formulas for $A_0$, $A_2$ and $\psi_2$ by (\ref{A0 with psi+ psi-}), (\ref{A2 with psi+ psi-}), (\ref{psi+psi-}), using Lemma \ref{basic inequality}, we obtain
\begin{equation}  \label{thm4.6.(1)}
\left \lVert   A_0 \right \rVert _{L^2}+\left \lVert  \frac{\cosh r (A_2 -1)}{\sinh^2 r} \right \rVert _{L^2} \lesssim \left \lVert   \psi^{\pm} \right \rVert _{L^4}, \mbox{ } \left \lVert   \frac{\psi_2}{\sinh r} \right \rVert _{L^4}\lesssim \left \lVert   \psi^{\pm} \right \rVert _{L^4}.
\end{equation}
In a similar argument, we also obtain that
\begin{equation}   \label{different nonlinearity}
\left \lVert   F^{\pm}(\psi^{\pm})-F^{\pm}(\widetilde{\psi}^{\pm}) \right \rVert _{L^{\frac{4}{3}}} \lesssim \left \lVert   \psi^{\pm}-\widetilde{\psi}^{\pm} \right \rVert _{L^4} (\left \lVert   \psi^{\pm} \right \rVert _{L^4}^2 +\left \lVert  \widetilde{\psi}^{\pm} \right \rVert _{L^4}^2).
\end{equation}
Denote $V=2\frac{\cosh r-1}{\sinh^2 r}$, then by Duhamel formula, define the maps
\begin{equation}    \label{contraction map T}
\begin{aligned}
\mathcal{T}^+(\psi^{+}) = e^{it(\Delta-V)}e^{i2 \theta} \psi^+_0  -i \int_0^t e^{i(t-s)(\Delta -V)}F^+ (\psi^{+}) ds,\\
\mathcal{T}^-(\psi^{-}) = e^{it\Delta}\psi^-_0  -i \int_0^t e^{i(t-s)\Delta } (F^- (\psi^{-})-V \psi^-) ds.
\end{aligned}
\end{equation}
For any $\epsilon >0$, there exists $\phi^{\pm}_0 \in C_0^{\infty}$, such that $\left \lVert  \psi_0^{\pm}-\phi_0^{\pm} \right \rVert _{L^2}< \frac{\epsilon}{4C}$, and there exists $T_1>0$, s.t $T_1^{\frac{1}{4}}\left \lVert   \phi_0^{\pm} \right \rVert _{\dot{H}^{\frac{1}{2}}} <\frac{\epsilon}{4C}$, then dispersive estimates and (\ref{thm4.6.(1)}) imply
\begin{equation*}
\begin{aligned}
&    \left \lVert \mathcal{T}^{+}(\psi^{+}) \right \rVert _{L^4_{T_1} L^4}  \\
 \leq &    \left \lVert  e^{it(\Delta -V)}e^{i2 \theta} (\psi^+_0 -\phi^+_0)  \right \rVert _{L^4_{T_1} L^4} + \left \lVert  e^{it(\Delta -V)}e^{i2 \theta} \phi^+_0  \right \rVert _{L^4_{T_1} L^4} \\
&  + \left \lVert  \int_0^t \left \lVert  e^{i(t-s)(\Delta -V)}F^+(s) \textbf{1}_{[0,T_1]}(s) \right \rVert_{L^4_x} ds \right \rVert _{L^4_t}\\
\leq & C\left \lVert \psi^+_0 -\phi^+_0  \right \rVert _{L^2}+CT_1^{\frac{1}{4}} \left \lVert \phi^+_0  \right \rVert _{\dot{H}^{\frac{1}{2}}}+C \left \lVert  \int_0^t |t-s|^{-1/2} \left \lVert F^+(s) \textbf{1}_{[0,T_1]}(s) \right \rVert_{L^{4/3}_x} ds \right \rVert _{L^4_t}\\
\leq & \frac{\epsilon}{2} +C \left \lVert  F^+(t) \textbf{1}_{[0,T_1]}(t) \right \rVert _{L^{4/3}L^{4/3}}\\
\leq &  \frac{\epsilon}{2}+C \left \lVert \psi^{\pm} \right \rVert _{L^4_{T_1} L^4}^3\\
\leq & \epsilon.
\end{aligned}
\end{equation*}
Similarly, we have
\begin{equation*}
\begin{aligned}
\left \lVert \mathcal{T}^-(\psi^{-}) \right \rVert _{L^4_T L^4} \leq &  \frac{\epsilon}{2}+C \left \lVert \psi^{\pm} \right \rVert _{L^4_T L^4}^3+C\left \lVert V \right \rVert _{L^2_T L^2} \left \lVert \psi^{-} \right \rVert _{L^4_T L^4}\\
\leq & (\frac{3}{4}+ C\left \lVert V \right \rVert _{L^2_T L^2})\epsilon.
\end{aligned}
\end{equation*}
Since $V\in L^2 $ independent on t, there exists $0<T<T_1$, such that $C\left \lVert V \right \rVert _{L^2_T L^2}< \delta$, hence $\left \lVert\mathcal{T}^{-}(\psi^{-}) \right \rVert _{L^4_T L^4}<\epsilon$.
We can also show that $\mathcal{T}^{\pm}(\psi^{\pm})\in C([0,T];L^2)$. Indeed, by Strichartz estimates, we have
\begin{equation*}
\left \lVert \mathcal{T}^+(\psi^+) \right \rVert _{C([0,T];L^2)} \leq  \left \lVert \psi^+_0 \right \rVert _{L^2}+C\left \lVert \psi^{\pm} \right \rVert _{L^4_T L^4}^3\leq  2\left \lVert \psi^+_0 \right \rVert _{L^2},
\end{equation*}
and
\begin{equation*}
\left \lVert \mathcal{T}^{-}(\psi^{-}) \right \rVert _{C([0,T];L^2)} \leq \left \lVert \psi^-_0 \right \rVert _{L^2}+C\left \lVert \psi^{\pm} \right \rVert _{L^4_T L^4}^3+C\left \lVert V \right \rVert _{L^2_T L^2} \left \lVert \psi^{-} \right \rVert _{L^4_T L^4}\leq 2\left \lVert \psi^+_0 \right \rVert _{L^2}.
\end{equation*}
Therefore, $(\mathcal{T}^+(\psi^{+}),\mathcal{T}^-(\psi^{-}))\in X$ for any $\psi^{\pm}\in X$.

Then we need to show $(\mathcal{T}^+(\psi^{+}),\ \mathcal{T}^-(\psi^{-}))$ is a contraction map. By (\ref{different nonlinearity}), we get
\begin{equation*}
\begin{aligned}
 & \left \lVert \mathcal{T}^{+}(\psi^{+})-\mathcal{T}^{+}(\tilde{\psi}^{+}) \right \rVert _{C([0,T];L^2) \cap L^4_T L^4} \\
\leq & C\left \lVert \psi^{\pm}-\tilde{\psi}^{\pm} \right \rVert _{L^4_TL^4}( \left \lVert \psi^{\pm} \right \rVert _{L^4_TL^4}^2 +\left \lVert \tilde{\psi}^{\pm} \right \rVert _{L^4_TL^4}^2),\\
\leq  & C \epsilon^2 \left \lVert \psi^{\pm}-\tilde{\psi}^{\pm} \right \rVert _{L^4_TL^4}.
\end{aligned}
\end{equation*}
and
\begin{equation*}
\begin{aligned}
 & \left \lVert \mathcal{T}^{-}(\psi^{\pm})-\mathcal{T}^{-}(\tilde{\psi}^{\pm}) \right \rVert _{C([0,T];L^2) \cap L^4_T L^4} \\
\leq  & C \epsilon^2 \left \lVert \psi^{\pm}-\tilde{\psi}^{\pm} \right \rVert _{L^4_TL^4}+ C\left \lVert V \right \rVert _{L^2_T L^2} \left \lVert \psi^{-}- \tilde{\psi}^- \right \rVert _{L^4_T L^4},\\
\leq & (C\epsilon^2 +\delta) \left \lVert \psi^{\pm}-\tilde{\psi}^{\pm} \right \rVert _{L^4_TL^4}.
\end{aligned}
\end{equation*}
In conclusion, $(\mathcal{T}^+(\psi^{+}),\mathcal{T}^-(\psi^{-}))$ is a contraction map in $X$, by the fixed point theorem, there exists a unique solution in $X$ for small $T$ depending only on $\psi^{\pm}_0$ and $\left\lVert V\right\rVert_{L^2}$.

(iii) Let $u^{\pm}: I\times R^+ \longrightarrow \C$ be an approximate solution to system (\ref{system}) in the sense that
\begin{equation}             \label{system of u}
\left \{
\begin{aligned}
(i\partial_t +\Delta-V)u^+ &=F^+(u^{+}),\\
(i\partial_t +\Delta)u^- &=F^-(u^{-}),\\
u^+(0)=e^{i2\theta}\psi^{+}_0,\ & u^-(0)=\psi^{-}_0.
\end{aligned}
\right.
\end{equation}
Based on standard fixed point argument, by the Strichartz estimates for Schr\"{o}dinger operators $-\Delta_{\H^2}$ and $-\Delta_{\H^2}+V$, there exists $\epsilon>0$ such that if $E(\psi^{\pm}_0)=\left \lVert \psi^{\pm}_0 \right \rVert _{L^2} \leq \epsilon$, then (\ref{system of u}) has a unique global solution $u^{\pm}\in C(\R;L^2) \cap L^4L^4$, moreover, $\left \lVert u^{\pm} \right \rVert _{L^{\infty}L^2 \cap L^4L^4(\R\times\H^2)} \leq C \epsilon$.

Now we show using a perturbative argument that (\ref{system}) is global well-posed for $E(\psi^{\pm}_0)< \epsilon$. First we show that for $T$ sufficiently small depending only on $E(\psi^{\pm}_0)$, and $V$, the solution $(\psi^+,\psi^-)$ to (\ref{system}) on $[0,T]$ satisfies an a priori estimate
\begin{equation}         \label{estimate for psi}
\left \lVert \psi^{\pm} \right \rVert _{L^{\infty}L^2 \cap L^4L^4(0,T)} \leq 8C \epsilon.
\end{equation}
Fix a small parameter $\eta>0$, since $V=\frac{\cosh r-1}{\sinh^2 r}$, there exists $T>0$ for $I=[0,T]$, such that
\begin{equation*}
\left \lVert V \right \rVert _{L^2_I L^2} < \eta.
\end{equation*}
Further, by Duhamel formula, Strichartz estimates and $\left \lVert u^{\pm} \right \rVert _{L^{\infty}L^2 \cap L^4L^4} \leq C \epsilon$, we have
\begin{equation}            \label{(iii)1}
\begin{aligned}
\left \lVert e^{it(\Delta-V)}  u^{+}(0) \right \rVert _{L^4_I L^4}
\leq & \left \lVert u^+ \right \rVert _{L^4_I L^4}+\left \lVert \int_0^t e^{i(t-s)(\Delta-V)} F^+(u^{+})ds  \right \rVert _{ L^4_I L^4},\\
\leq & C\epsilon +C(C\epsilon)^3,\\
\leq & 2C\epsilon.
\end{aligned}
\end{equation}
and
\begin{equation}              \label{(iii)2}
\begin{aligned}
\left \lVert e^{it\Delta}  u^{-}(0) \right \rVert _{L^4_I L^4}
\leq & \left \lVert u^- \right \rVert _{L^4_I L^4}+\left \lVert \int_0^t e^{i(t-s)\Delta} F^-(u^{-})ds  \right \rVert _{ L^4_I L^4},\\
\leq & 2C\epsilon.
\end{aligned}
\end{equation}
Since $(\psi^{+},\ \psi^-)$ satisfies (\ref{system}) and $u^+(0)=e^{i2\theta}\psi^{+}_0,\ u^-(0)=\psi^{-}_0$, apply the Duhamel formula, (\ref{(iii)1}) and (\ref{(iii)2}) to obtain
\begin{equation}                   \label{(iii)3}
\begin{aligned}
 \left \lVert \psi^+ \right \rVert _{L^4_I L^4}
\leq & \left \lVert e^{it(\Delta-V)} u^+(0) \right \rVert _{L^4_I L^4}+ C\left \lVert \psi^{\pm}  \right \rVert _{ L^4_I L^4}^3,\\
\leq & 2C\epsilon +C\left \lVert \psi^{\pm}  \right \rVert _{ L^4_I L^4}^3.\\
\end{aligned}
\end{equation}
and
\begin{equation*}
\begin{aligned}
\left \lVert \psi^- \right \rVert _{L^4_I L^4}
\leq & \left \lVert e^{it\Delta} u^-(0) \right \rVert _{L^4_I L^4}+ C\left \lVert \psi^{\pm}  \right \rVert _{ L^4_I L^4}^3+C\left \lVert V  \right \rVert _{ L^2_I L^2} \left \lVert \psi^{-}  \right \rVert _{ L^4_I L^4},\\
\leq & 2C\epsilon +C\left \lVert \psi^{\pm}  \right \rVert _{ L^4_I L^4}^3+C\eta \left \lVert \psi^{-}  \right \rVert _{ L^4_I L^4}.\\
\end{aligned}
\end{equation*}
Choose $\eta$ sufficiently small such that $C\eta<\frac{1}{3}$, which yields
\begin{equation}              \label{(iii)5}
\left \lVert \psi^{-}  \right \rVert _{ L^4_I L^4} \leq 3C\epsilon +\frac{3}{2}C \left \lVert \psi^{\pm}  \right \rVert^3 _{ L^4_I L^4},
\end{equation}
Combining (\ref{(iii)3}) and (\ref{(iii)5}), we have
\begin{equation*}
\left \lVert \psi^{\pm}  \right \rVert _{ L^4_I L^4} \leq 5C\epsilon +3C \left \lVert \psi^{\pm}  \right \rVert^3 _{ L^4_I L^4}.
\end{equation*}
Then by continuity argument, we get
\begin{equation}              \label{(iii)6}
\left \lVert \psi^{\pm}  \right \rVert _{ L^4_I L^4} \leq 7C\epsilon.
\end{equation}
which, together with Strichartz estimates gives
\begin{equation}              \label{(iii)7}
\begin{aligned}
\left \lVert \psi^{\pm}  \right \rVert _{ L^{\infty}_I L^2} \leq & \left \lVert e^{it(\Delta-V)} e^{i2\theta}\psi^{+}_0  \right \rVert _{ L^2} + \left \lVert e^{it\Delta} \psi^{-}_0  \right \rVert _{ L^2},\\
&  + 2C\left \lVert \psi^{\pm}  \right \rVert^3 _{ L^4_I L^4} +C \left \lVert V  \right \rVert _{ L^2_I L^2} \left \lVert \psi^{-}  \right \rVert _{ L^4_I L^4},  \\
\leq & 2\epsilon +2C(7C\epsilon)^3 +C\eta 8C\epsilon,\\
<& 4\epsilon.
\end{aligned}
\end{equation}
Therefore (\ref{estimate for psi}) is obtained.

Then from the system (\ref{system}), we have energy conservation $E(\psi^{\pm})=E(\psi^{\pm}_0)$. Since the $T$ depends only on $E(\psi^{\pm}_0)$ and $V=2\frac{\ch r-1}{\sh^2 r}$, by (ii) and energy conservation, it will follow that $(\psi^{+},\psi^-)$ is a global solution with $\left \lVert \psi^{\pm}  \right \rVert _{ L^4_J L^4} \leq C(\left \lVert \psi^{\pm}_0  \right \rVert _{ L^2},|J|)$ for any compact interval $J \subset \R$.

(v)Applying $(-\Delta)^{\frac{s}{2}}$ for $s=1,\ 2$ to both sides of system (\ref{system}), we obtain
\begin{equation*}
\left\{\begin{aligned}
&(i\partial_t+\Delta)(-\Delta)^{\frac{s}{2}}R_+\psi^+=(-\Delta)^{\frac{s}{2}}F^++(-\Delta)^{\frac{s}{2}}(VR_+\psi^+),\\
&(i\partial_t+\Delta)(-\Delta)^{\frac{s}{2}}R_-\psi^-=(-\Delta)^{\frac{s}{2}}F^--(-\Delta)^{\frac{s}{2}}(VR_-\psi^-).
\end{aligned}
\right.
\end{equation*}
The nonlinearities $F^{\pm}$ can be written as 
\begin{align*}
F^{\pm}=&\Big(-\frac{1}{2}|R_{\pm}\psi^{\pm}|^2+\int_r^{\infty}\frac{\cosh s}{\sinh s}\Re(\psi^+\bar{\psi}^-)ds\Big)R_{\pm}\psi^{\pm}\pm\frac{\cosh r}{2\sinh^2 r}\int_0^r(|\psi^+|^2-|\psi^-|^2)\sinh sdsR_{\pm}\psi^{\pm},\\
\triangleq & F_1^{\pm}\pm F_2^{\pm}.
\end{align*}
Let $\varphi(r)\in C_c^{\infty}$ be a bump function with $0\leq \varphi \leq 1$, $\varphi\big|_{B_1(0)}=1$ and $\varphi\big|_{B_2^c(0)}=0$, $F_2^{\pm}$ can be rewritten as 
\begin{equation*}
F_2^{\pm}=\varphi F_2^{\pm} +(1-\varphi)F_2^{\pm}\triangleq {\rm I}^{\pm}+{\rm II}^{\pm}.
\end{equation*}
Since $\left\lVert \psi^{\pm}\right\rVert_{L^4_IL^4}\leq M$, Strichartz estimates imply $\left\lVert \psi^{\pm}\right\rVert_{L^3_IL^6}\lesssim 1$. Then we split the interval into $I=\bigcup I_j$ such that $\left\lVert \psi^{\pm}\right\rVert_{L^3_{I_j}L^6}<\epsilon$, $\left\lVert(-\Delta)^{1/2} \psi^{\pm}_0\right\rVert_{L^2}\left\lVert \psi^{\pm}\right\rVert_{L^3_{I_j}L^6}\lesssim1$, $\left\lVert \partial_r^k V\right\rVert_{L^3_{I_j}L^6}\ll 1$ and $\left\lVert \frac{\cosh r}{\sinh r}\partial_r V\right\rVert_{L^3_{I_j}L^6}\ll 1$.
By Duhamel's formula and Strichartz estimates, we have 
\begin{equation}                 \label{regularity estimate}
\left\lVert (-\Delta)^{\frac{s}{2}}R_{\pm}\psi^{\pm}\right\rVert_{L^{\infty}_{I_j}L^2\bigcap L^3_{I_j}L^6}\lesssim \left\lVert (-\Delta)^{\frac{s}{2}}R_{\pm}\psi^{\pm}_0\right\rVert_{L^2}+\left\lVert (-\Delta)^{\frac{s}{2}}F^{\pm}\pm (-\Delta)^{\frac{s}{2}}(VR_{\pm}\psi^{\pm})\right\rVert_{L^1_{I_j}L^2}.
\end{equation}
Now we estimate the second term of the right hand side of (\ref{regularity estimate}). Define
\begin{align*}
&A=\left \lVert \psi^{\pm} \right \rVert_{L^3_{I_j}L^6},\ \   B=\left \lVert \partial_r \psi^{\pm} \right \rVert_{L^3_{I_j}L^6}+\left \lVert \frac{\psi^+}{\sinh r} \right \rVert_{L^3_{I_j}L^6},\\
&C=\left \lVert \partial_r^2 \psi^{\pm} \right \rVert_{L^3_{I_j}L^6}+\left \lVert \frac{\cosh r}{\sinh r} \partial_r \psi^{\pm} \right \rVert_{L^3_{I_j}L^6}+\left \lVert \frac{\psi^+}{\sinh^2 r} \right \rVert_{L^3_{I_j}L^6}.
\end{align*}
For $s=1$, from (\ref{Sobolev inequality 5}) we easily obtain
\begin{equation}
\begin{aligned}              \label{s=1 easy term}
&\left\lVert (-\Delta)^{\frac{1}{2}}(F_1^{\pm} \pm {\rm II}^{\pm} \pm VR_{\pm}\psi^{\pm})\right\rVert_{L^1_{I_j}L^2} \\
\lesssim & B(A^2+\left\lVert V\right\rVert_{L^{3/2}_{I_j}L^3})+A^3+\left\lVert \partial_r V\right\rVert_{L^{3/2}_{I_j}L^3}A.
\end{aligned}
\end{equation}
Since the operator $\frac{1}{r^2}\int_0^r \cdot sds$ keeps the two dimensional frequency localization, one could use Littlewood-Paley decomposition to deal with ${\rm I}^{\pm}$. To estimate ${\rm I}^{\pm}$, we claim that for $f$ radial, $p\geq 2$ the following estimate holds
\begin{equation}         \label{frequency localization}
\left\lVert (-\Delta_{\R^2})^{\frac{s}{2}}\frac{1}{r^2}\int_0^r f\ sds\right\rVert_{L^p(\R^2)}\lesssim \left\lVert (-\Delta_{\R^2})^{\frac{s}{2}} f \right\rVert_{L^p(\R^2)}.
\end{equation}
Then we have 
\begin{equation}             \label{Ipm hard term}
\begin{aligned}
&\left\lVert (-\Delta)^{\frac{1}{2}}{\rm I}^{\pm}\right\rVert_{L^1_{I_j}L^2}\\
\lesssim &\left\lVert \partial_r\big( \varphi(r)\frac{\cosh r}{\sinh^2 r}\int_0^r (|\psi^+|^2-|\psi^-|^2)\sinh sds  \big) R_{\pm}\psi^{\pm}\right\rVert_{L^1_{I_j}L^2}\\
& +\left\lVert  \varphi(r)\frac{\cosh r}{\sinh^2 r}\int_0^r (|\psi^+|^2-|\psi^-|^2)\sinh sds  \right\rVert_{L^{3/2}_{I_j}L^3}\cdot B,\\
\lesssim & \left\lVert \partial_r\big( \varphi(r)\frac{r^2\cosh r}{\sinh^2 r} \big)\frac{1}{r^2}\int_0^r (|\psi^+|^2-|\psi^-|^2)\frac{\sinh s}{s}\varphi(\frac{r}{2}) sds  \right\rVert_{L^{3/2}_{I_j}L^3}\cdot A\\
&+\left\lVert\big( \varphi(r)\frac{r^2\cosh r}{\sinh^2 r} \big) (-\Delta_{\R^2})^{1/2}\big( \frac{1}{r^2}\int_0^r (|\psi^+|^2-|\psi^-|^2)\frac{\sinh s}{s}\varphi(\frac{r}{2}) sds\big)  \right\rVert_{L^{3/2}_{I_j}L^3}\cdot A+BA^2,\\
\lesssim & A^3+BA^2.
\end{aligned}
\end{equation}
Hence, (\ref{regularity estimate}), (\ref{s=1 easy term}) and (\ref{Ipm hard term}) imply
\begin{equation}
\left\lVert (-\Delta)^{1/2}R_{\pm}\psi^{\pm} \right\rVert_{L^{\infty}_{I_j}L^2\bigcap L^3_{I_j}L^6} \lesssim \left\lVert (-\Delta)^{1/2}R_{\pm}\psi^{\pm}_0 \right\rVert_{L^2}+\left\lVert \psi^{\pm} \right\rVert_{L^3_{I_j}L^6}.
\end{equation}
We repeat the above procedure for $I_{j+1}$ to obtain the similar estimate in $I_{j+1}$. Thus, (\ref{regularity thm}) valid for $s=1$.

For $s=2$, similarly, we also easily have
\begin{equation}\label{s=2 easy term}
\begin{aligned}      
&\left\lVert (-\Delta)(F_1^{\pm}\pm{\rm II}^{\pm}\pm VR_{\pm}\psi^{\pm})\right\rVert_{L^1_{I_j}L^2} \\
\lesssim & C(A^2+\left\lVert V\right\rVert_{L^{3/2}_{I_j}L^3})+B(A^2+\left\lVert \partial_r V \right\rVert_{L^{3/2}_{I_j}L^3})+B^2A+A^3+\left\lVert \Delta V \right\rVert_{L^{3/2}_{I_j}L^3}A.
\end{aligned}
\end{equation}
Then for ${\rm I}^{\pm}$, which can be rewritten as 
\begin{align*}
(-\Delta){\rm I}^{\pm}=&(-\Delta)\big( \varphi(r)\frac{r^2\cosh r}{\sinh^2 r}R_{\pm}\psi^{\pm}\cdot \frac{1}{r^2}\int_0^{r} (|\psi^+|^2-|\psi^-|^2)\varphi(\frac{s}{2})\frac{\sinh s}{s}\ sds  \big),\\
=& (-\Delta)\big( \varphi(r)\frac{r^2\cosh r}{\sinh^2 r}R_{\pm}\psi^{\pm}\big)\cdot \varphi(\frac{r}{2}) \frac{1}{r^2}\int_0^{r} (|\psi^+|^2-|\psi^-|^2)\varphi(\frac{s}{2})\frac{\sinh s}{s}\ sds\\
&+\big( \varphi(r)\frac{r^2\cosh r}{\sinh^2 r}R_{\pm}\psi^{\pm}\big)\cdot (-\Delta)\big(\frac{1}{r^2}\int_0^{r} (|\psi^+|^2-|\psi^-|^2)\varphi(\frac{s}{2})\frac{\sinh s}{s}\ sds  \big)\\
& -2\partial_r\big( \varphi(r)\frac{r^2\cosh r}{\sinh^2 r}R_{\pm}\psi^{\pm}\big)\cdot \partial_r\big( \frac{1}{r^2}\int_0^{r} (|\psi^+|^2-|\psi^-|^2)\varphi(\frac{s}{2})\frac{\sinh s}{s}\ sds  \big),\\
\triangleq &{\rm I}_1^{\pm}+{\rm I}_2^{\pm}+{\rm I}_3^{\pm}.
\end{align*} 
By (\ref{basic inequality6}) and (\ref{frequency localization}) we have
\begin{equation}       \label{s=2 I1 I3 hard term}
\left\lVert{\rm I}_1^{\pm}+{\rm I}_3^{\pm} \right\rVert_{L^1_{I_j}L^2}\lesssim CA^2+BA^2+A^3+B^2A,
\end{equation}
for ${\rm I}_2^{\pm}$, from (\ref{frequency localization}) we obtain
\begin{equation}     \label{s=2 I2 hard term}
\begin{aligned}
\left\lVert{\rm I}_2^{\pm} \right\rVert_{L^1_{I_j}L^2}\lesssim & A\left\lVert\Delta(|\psi^+|^2-|\psi^-|^2)\varphi(\frac{r}{2})\frac{\sinh r}{r} \right\rVert_{L^{3/2}_{I_j}L^3},\\
\lesssim & CA^2+BA^2+B^2A+A^3.
\end{aligned}
\end{equation}
Thus, by (\ref{regularity estimate}), (\ref{s=2 easy term}), (\ref{s=2 I1 I3 hard term}) and (\ref{s=2 I2 hard term}) we have 
\begin{equation}
\left\lVert (-\Delta)R_{\pm}\psi^{\pm}\right\rVert_{L^{\infty}_{I_j}L^2\bigcap L^3_{I_j}L^6}\lesssim \left\lVert (-\Delta)R_{\pm}\psi^{\pm}_0\right\rVert_{L^2}+\left\lVert (-\Delta)^{1/2}R_{\pm}\psi^{\pm}_0\right\rVert_{L^2}+\left\lVert \psi^{\pm}\right\rVert_{L^3_{I_j}L^6}.
\end{equation}
Hence, (\ref{regularity thm}) follows for $s=2$.

Finally, we prove (\ref{frequency localization}). Denote $B_r=B_r(0)$ and $m_k(r)=\varphi(2^{-k}r)-\varphi(2^{-k+1}r)$. Since $f$ is radial, we have
\begin{equation}
\frac{1}{r^2}\int_0^r f\ sds=C\frac{1}{m(B_r)}\int_{\R^2} f\cdot \textbf{1}_{B_r}(y)dy=\frac{1}{m(B_r)}f\ast \textbf{1}_{B_r}(0).
\end{equation}
Then for $P_k f=\mathcal{F}^{-1}(m_k(\xi)\widehat{f}(\xi))$ , we have
\begin{align*}
&\int_{\R^2}\frac{1}{m(B_{|x|})}(P_kf\ast \textbf{1}_{B_{|x|}})(0)e^{-ix\eta}dx\\
=& \int_{\R^2} \frac{1}{m(B_{|x|})}\int \widehat{P_k f}(\xi)\widehat{\textbf{1}_{B_{|x|}}}(\xi)d\xi e^{-ix\eta}dx,\\
=& \int \widehat{P_k f}(\xi)\widehat{\textbf{1}_{B_{1}}}(|x|\xi) e^{-ix\eta}dxd\xi,\\
=& \int \widehat{P_k f}(\xi)\textbf{1}_{B_{1}}(\frac{\eta}{|\xi|}) |\xi|^{-2}d\xi,
\end{align*}
which implies
\begin{equation}            \label{frequency local}
\frac{1}{r^2}\int_0^r P_kf\ sds= P_{\leq k}\big(\frac{1}{r^2}\int_0^r P_k f\ sds\big).
\end{equation}
Hence, by Littlewood-Paley decomposition and (\ref{frequency local}) we have
\begin{align}          \nonumber
&\left\lVert (-\Delta_{\R^2})^{s/2}  \frac{1}{r^2}\int_0^r P_kf\ sds\right\rVert_{L^p}\\ \nonumber
\lesssim  & \left\lVert\Big[\sum\limits_j |\sum\limits_k 2^{sj} P_j\big(  \frac{1}{r^2}\int_0^r P_kf\ sds\big)|^2 \Big]^{1/2}\right\rVert_{L^p},\\              \label{frequency local final} 
\lesssim & \left\lVert\Big[\sum\limits_j \big(\sum\limits_k 2^{s(j-k)}\textbf{1}_{\leq 0}(j-k) \big| \frac{1}{r^2}\int_0^r 2^{sk} P_kf\ sds \big|\big)^2 \Big]^{1/2}\right\rVert_{L^p},  
\end{align}
from (\ref{basic inequality6}) we obtain
\begin{align*}
(\ref{frequency local final})
\lesssim & \left\lVert\Big[\sum\limits_j \big(\sum\limits_k 2^{s(j-k)}\textbf{1}_{\leq 0}(j-k) \big|  2^{sk} P_kf\big|\big)^2 \Big]^{1/2}\right\rVert_{L^p},\\
\lesssim & \left\lVert \{ 2^{sk}\textbf{1}_{\leq 0}(k)\ast  |2^{sk} P_kf|\}_{l^2} \right\rVert_{L^p},\\
\lesssim & \left\lVert \{2^{sk} P_kf\}_{l^2} \right\rVert_{L^p},\\
\lesssim & \left\lVert (-\Delta_{\R^2})^{s/2} f \right\rVert_{L^p}.
\end{align*}
Thus, (\ref{frequency localization}) follows.
\end{proof}

The above theorem is only concerned with the general solutions of (\ref{system}). Since the system of $(\psi^+,\ \psi^-)$ is derived from the Schr\"{o}dinger map (\ref{Schrodinger map}), if we want to reconstructed the map $u$ by $\psi^{\pm}$, the solution $\psi^{\pm}$ of (\ref{system}) must satisfies the compatibility condition (\ref{compatibility psi+psi-}).
\begin{theorem}                 \label{thm 4.7}   
If $\psi^{\pm}_0\in L^2$ satisfies the compatibility condition, then $\psi^{\pm}(t)$ satisfies the compatibility condition for any $t\in I$. If, in addition, $R_{\pm}\psi^{\pm}_0\in H^3$, then (\ref{compatibility conditions}) and (\ref{curvature}) are satisfied.
\end{theorem}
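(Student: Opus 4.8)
The plan is to recast the compatibility condition (\ref{compatibility psi+psi-}) in the reduced variables $\psi_1,\psi_2$ and then show that its failure is propagated by a linear homogeneous equation with vanishing data. By (\ref{psi+psi-}) we have $\psi^+-\psi^-=2i\psi_2/\sinh r$ and $\psi^++\psi^-=2\psi_1$, so (\ref{compatibility psi+psi-}) is equivalent to the single first order constraint
\begin{equation*}
\mathcal{C}(t,r):=\partial_r\psi_2-iA_2\psi_1=0,
\end{equation*}
where $\psi_1,\psi_2,A_2$ are attached to $\psi^{\pm}$ through (\ref{psi+psi-}) and (\ref{A2 with psi+ psi-}). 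Note that $\partial_rA_2=\Im(\psi_1\bar\psi_2)$ holds automatically from the definition (\ref{A2 with psi+ psi-}), and that a short computation gives $\frac{d}{dr}(A_2^2+|\psi_2|^2)=2\Re(\bar\psi_2\,\mathcal{C})$; since $A_2(t,0)=1$ and $\psi_2(t,0)=0$, this already shows that the identity $A_2^2+|\psi_2|^2=1$ used throughout Section 4 is itself equivalent to $\mathcal{C}\equiv0$. Hence the whole statement reduces to proving $\mathcal{C}(t,\cdot)\equiv0$ for all $t\in I$, given $\mathcal{C}(t_0,\cdot)\equiv0$.

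First I would differentiate $\mathcal{C}$ in $t$, substituting the evolution equations for $\psi_1$ and $\psi_2$ — namely (\ref{system of psi1 and psi2 version1}), which is just (\ref{system}) rewritten — and the identity $\partial_tA_2=\int_0^r\Im(\partial_t\psi_1\,\bar\psi_2+\psi_1\,\partial_t\bar\psi_2)\,ds$ read off from (\ref{A2 with psi+ psi-}); here $\psi_0$ and $A_0$ are taken to be given by (\ref{psi0}) and (\ref{A0 with psi+ psi-}), so that the curvature relation $\partial_rA_0=\Im(\psi_1\bar\psi_0)$ holds by construction. I expect all terms not proportional to $\mathcal{C}$ or $\overline{\mathcal{C}}$ to cancel, leaving a closed linear Schr\"odinger-type equation
\begin{equation*}
(i\partial_t+\Delta)\,\mathcal{C}=a(t,r)\,\mathcal{C}+b(t,r)\,\overline{\mathcal{C}},
\end{equation*}
whose coefficients $a,b$ — including a fixed singular potential of the kind appearing in (\ref{system of psi1 and psi2 version2}) — are controlled by $\psi^{\pm},A_0,A_2$ via the Cauchy theory of Theorem \ref{Cauchy result}. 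Since $\mathcal{C}(t_0,\cdot)=0$, an energy estimate for this equation (using the decay of $\psi_2$ and $A_2-1$ from Lemma \ref{construct psi2 A2 from psi+} to kill boundary terms at $r=0$ and $r=\infty$) forces $\mathcal{C}\equiv0$ on $I$. For $\psi^{\pm}_0\in L^2$, where $\mathcal{C}$ need not be well defined a priori, I would run this computation first for smooth compactly supported compatible data and then pass to the limit, using the $L^2$ stability of Theorem \ref{Cauchy result}(iv) and the continuity in Lemma \ref{construct psi2 A2 from psi+} and Proposition \ref{construct u proposition}; the constraint set being closed, it persists in the limit.

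For the second assertion, when $R_{\pm}\psi^{\pm}_0\in H^3$ the Cauchy theory (Theorem \ref{Cauchy result}(v), together with Theorem \ref{main result 1} and Proposition \ref{equivalent of u and psi+- proposition}) yields a solution regular enough that $\psi_0$, $A_0$ and the covariant time derivative $D_0=\partial_t+iA_0$ are classically defined and all the identities below are legitimate. Granting the spatial compatibility $\partial_r\psi_2=iA_2\psi_1$ just proved, the mixed cases $D_0\psi_1=D_1\psi_0$ and $D_0\psi_2=D_2\psi_0$ of (\ref{compatibility conditions}) follow by combining the defining relation (\ref{psi0}) for $\psi_0$ with the evolution equations (\ref{system of psi1 and psi2 version1}) and the spatial constraint, each difference reducing to an expression in $\mathcal{C}$; and the curvature identity (\ref{curvature}) for the remaining index pairs is exactly the content of the defining formulas (\ref{A2 with psi+ psi-}) for $A_2$ (the $(1,2)$ component) and (\ref{A0 with psi+ psi-}) for $A_0$ (the $(0,1)$ and $(0,2)$ components), now available as pointwise identities thanks to the added regularity.

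The main obstacle is the algebraic cancellation in the propagation step: showing that $\partial_t\mathcal{C}$ carries no source term independent of $\mathcal{C}$ and $\overline{\mathcal{C}}$. This is precisely where the detailed structure of the nonlinearities in (\ref{system}), the explicit integral definitions of $A_0$ and $A_2$ (so that the curvature relations hold as identities rather than merely as consequences of the constraint), and the coupling with $A_2^2+|\psi_2|^2$ all have to be used in concert; the energy estimate for the resulting linear equation and the approximation step are then routine.
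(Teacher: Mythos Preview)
Your proposal is correct and follows essentially the same approach as the paper: your $\mathcal{C}$ is exactly $-F$ where the paper sets $F=D_2\psi_1-D_1\psi_2$, derives a homogeneous linear Schr\"odinger-type equation for $F/\sinh r$, and concludes $F\equiv 0$ by an $L^2$ energy estimate (justified via a Fourier smoothing operator $P_\epsilon$ at the $H^2$ level) together with Gronwall, then passes to general $L^2$ data by approximation. The only detail you leave implicit is the normalization $F/\sinh r$ (equivalently $e^{i\theta}F/\sinh r$), which is what places the constraint in $L^2(\H^2)$ and makes the boundary terms in the energy identity vanish.
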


\begin{proof}     
Given $\psi_1=\frac{\psi^+ +\psi^-}{2}$, $\frac{\psi_2}{\sinh r}=\frac{\psi^+- \psi^-}{2i}$, $A_1=0$. To prove the compatibility condition (\ref{compatibility psi+psi-}), it suffices to show that $D_1 \psi_2=D_2 \psi_1$ is preserved for $t\in I$. For this we need to derive the equation for
\begin{equation*}
F=D_2 \psi_1-D_1\psi_2.
\end{equation*}

Before deriving the equation for $F$, we give some identities from (\ref{system}). First, (\ref{A2 with psi+ psi-}) gives
\begin{equation}              \label{thm 4.7 no.1}
\partial_1 A_2-\partial_2 A_1= \Im (\psi_1 \bar{\psi}_2),
\end{equation}
Second, the system of $(\psi^+,\psi^-)$ (\ref{system}) and (\ref{A2 with psi+ psi-}) imply that
\begin{equation*}
\begin{aligned}
\partial_0 A_2- \partial_2 A_0
=&  \frac{1}{2} \int_0^r (\Re (\partial_t \psi^+  \bar{\psi}^+)-\Re (\partial_t \psi^- \bar{\psi}^-))\sinh sds,\\
=&  -\frac{1}{2} \int_0^r \partial_s (\Im(\partial_s \psi^+  \bar{\psi}^+)\sinh s) -\partial_s (\Im(\partial_s \psi^-  \bar{\psi}^-)\sinh s)ds,\\
=&  \Im(\psi_0 \bar{\psi}_2)+\Re (F \bar{\psi}_1),
\end{aligned}
\end{equation*}
where $\psi_0$ is given by (\ref{psi0})
Third, (\ref{A0 with psi+ psi-}) implies
\begin{equation*}
\partial_1 A_0-\partial_0 A_1=  \Im(\psi_1 \bar{\psi}_0) -\Re (\frac{F\bar{\psi}_2}{\sinh ^2 r}).
\end{equation*}
Finally, we obtain the following two equations from (\ref{system}) by algebraic computation and $A_2^2 +|\psi_2|^2=1$,
\begin{equation*}
\begin{aligned}
&D_0 \psi_1=i[D_1(D_1+ \coth r) \psi_1+\frac{D_2D_2 \psi_1}{\sinh^2 r}-2i \frac{\cosh r A_2}{\sinh^2 r}\frac{\psi_2}{\sinh r}+i \Im(\psi_1 \frac{\bar{\psi}_2}{\sinh r})\frac{\psi_2}{\sinh r}],\\
&D_0 \psi_2=i[(D_1+ \coth r)D_1 \psi_2+\frac{D_2D_2 \psi_2}{\sinh^2 r}+2 \frac{\cosh r }{\sinh r}F-i \Im(\psi_1 \bar{\psi}_2)\psi_1].
\end{aligned}
\end{equation*}
Then combining the above two equations with (\ref{differentialpsi0}), we have
\begin{equation}               \label{thm 4.7 no.2}
\begin{aligned}
&D_1\psi_0- D_0 \psi_1=\frac{-i}{\sinh^2 r} D_2 F,\\
&D_2\psi_0- D_0 \psi_2=i(D_1-\coth r)  F.
\end{aligned}
\end{equation}
Apply the operator $D_0$ to $F$, by (\ref{thm 4.7 no.1})-(\ref{thm 4.7 no.2}), we have
\begin{equation*}
\begin{aligned}
D_0 F
=& D_0 D_2 \psi_1 -D_0D_1 \psi_2,\\
=& D_2 D_0 \psi_1- D_1 D_0\psi_2 +i\Im(\psi_0 \bar{\psi}_2)\psi_1+i\Re (F\bar{\psi}_1)\psi_1 \\
 & - i\Im (\psi_0 \bar{\psi}_1)\psi_2-i\Re(\frac{F\bar{\psi}_2}{\sinh ^2 r})\psi_2, \\
=& D_2 D_1 \psi_0+D_2(\frac{i}{\sinh^2 r}D_2 F)- D_1 (D_2\psi_0-i(D_1-\coth r)F)\\
 &   +i\Im(\psi_0 \bar{\psi}_2)\psi_1-i\Im (\psi_0 \bar{\psi}_1)\psi_2+ i\Re (F\bar{\psi}_1)\psi_1 -i\Re(\frac{F\bar{\psi}_2}  {\sinh ^2 r})\psi_2,\\
=&  D_2 D_1 \psi_0- D_1 D_2\psi_0 +i\Im(\psi_0 \bar{\psi}_2)\psi_1  +i\Im (\psi_1 \bar{\psi}_0)\psi_2 -\frac{iA_2^2}{\sinh^2 r}F \\
& +i\partial_r (\partial_r -\coth r)F+ i\Re (F\bar{\psi}_1)\psi_1 -i\Re(\frac{F\bar{\psi}_2}{\sinh ^2 r})\psi_2,\\
=& -\frac{iA_2^2}{\sinh^2 r}F+i\partial_r (\partial_r -\coth r)F+ i\Re (F\bar{\psi}_1)\psi_1 -i\Re(\frac{F\bar{\psi}_2}{\sinh ^2 r})\psi_2.
\end{aligned}
\end{equation*}
So we derive equation for F:
\begin{equation*}
(i\partial_t+\partial_r^2 -\coth r \partial_r) F=(A_0+\frac{A_2^2}{\sinh^2 r}+\partial_r(\coth r))F-\Re(F\bar{\psi}_1)\psi_1+\Re(\frac{F\bar{\psi}_2}{\sinh^2 r})\psi_2,
\end{equation*}
namely
\begin{equation}     \label{F equation}
(i\partial_t+\Delta-\frac{1}{\sinh^2 r})\frac{F}{\sinh r}=A_0 \frac{F}{\sinh r}+\frac{A_2^2-1}{\sinh^2 r}\frac{F}{\sinh r}-\Re(\frac{F}{\sinh r}\bar{\psi}_1)\psi_1+\Re(\frac{F}{\sinh r} \frac{\bar{\psi}_2}{\sinh r})\frac{\psi_2}{\sinh r}.
\end{equation}

If $R^{\pm}\psi^{\pm}\in H^1$, we can write
\begin{equation*}
\begin{aligned}
\frac{F}{\sinh r}
=&\frac{1}{\sinh r}(iA_2 \psi_1-\partial_r \psi_2),\\
=& \frac{i}{2}\big[ (A_2 +1)\frac{\psi^+}{\sinh r}+\frac{A_2 -1}{\sinh r}\psi^- +\partial_r( \psi^+ -\psi^-)+\frac{\cosh r-1}{\sinh r}(\psi^+ -\psi^-)\big],
\end{aligned}
\end{equation*}
Due to the boundedness of  $A_2$ and $\frac{\cosh r-1}{\sinh r}$, we get $\frac{F}{\sinh r}\in L^2$.

If $R^{\pm}\psi^{\pm}\in H^2$, we using $A_2^2+|\psi_2|^2=1$ and Sobolev embedding, yields $\frac{F}{\sinh r}\in \dot{H}^1$ by the representation
\begin{equation*}
\begin{aligned}
-2i\frac{F}{\sinh^2 r}
=& (A_2+1)\frac{\psi^+}{\sinh^2 r}+\frac{A_2-1}{\sinh r} \frac{\psi^-}{\sinh r}+\frac{\cosh r}{\sinh r}(\partial_r \psi^+ -\partial_r \psi^-)\\
 & +\frac{\cosh r-1}{\sinh^2 r}(\psi^+- \psi^-)+\frac{\cosh r-1}{\sinh r}\partial_r(\psi^-- \psi^+),\\
-2i\partial_r(\frac{F}{\sinh r})
=& \frac{\partial_r A_2}{\sinh r}(\psi^++\psi^-) -\frac{\cosh r-1}{\sinh^2 r}A_2 (\psi^++ \psi^-) -\frac{A_2+1}{\sinh^2 r}\psi^+ -\frac{A_2-1}{\sinh^2 r}\psi^- \\
 & \frac{A_2}{\sinh r}\partial_r(\psi^+ +\psi^-) +\partial_{rr}(\psi^+-\psi^-) +\frac{\cosh r}{\sinh r} \partial_r(\psi^+ -\psi^-).
\end{aligned}
\end{equation*}

Let $P_{\epsilon}$ for $\epsilon>0$ be the smoothing operator defined by the Fourier multiplier $\lambda\rightarrow e^{-\epsilon^2 \lambda^2}$. Denote $N$ is the nonlinearity of (\ref{F equation}). Applying $P_{\epsilon}$ to both sides of (\ref{F equation}), we obtain
\begin{equation}
(i\partial_t+\Delta_{\H^2})P_{\epsilon}(e^{i\theta} \frac{F}{\sinh r})=P_{\epsilon}(e^{i\theta}N).
\end{equation}
Since $P_{\epsilon}(e^{i\theta} \frac{F}{\sinh r})$, $\partial_r P_{\epsilon}(e^{i\theta} \frac{F}{\sinh r})$ and $\frac{1}{\sinh r}P_{\epsilon}(e^{i\theta} \frac{F}{\sinh r})\in L^2$, which implies 
\begin{equation}
\partial_r P_{\epsilon}(e^{i\theta} \frac{F}{\sinh r})\cdot\sinh r,\ \ P_{\epsilon}(e^{i\theta} \frac{F}{\sinh r})\rightarrow 0, \ \ {\rm as}\ \ r\rightarrow 0,
\end{equation} 
and 
\begin{equation}
\partial_r P_{\epsilon}(e^{i\theta} \frac{F}{\sinh r})\sinh^{1/2}r,\ \ P_{\epsilon}(e^{i\theta} \frac{F}{\sinh r})\sinh^{1/2}r\rightarrow 0,\ \  {\rm as}\ \ r\rightarrow \infty.
\end{equation}
Hence, by integration by parts and (\ref{Sobolev inequality 4}), we get
\begin{align*}
\partial_t \left\lVert P_{\epsilon}(e^{i\theta} \frac{F}{\sinh r})\right\rVert_{L^2}^2=& 2\Re(i\partial_r P_{\epsilon}(e^{i\theta} \frac{F}{\sinh r})\cdot P_{\epsilon}(e^{i\theta} \frac{F}{\sinh r})\sinh r)\Big|_0^{\infty}\\
& -2\int\Re(iP_{\epsilon}(e^{i\theta}N)P_{\epsilon}(e^{i\theta} \frac{F}{\sinh r})){\rm dvol}_g,\\
\leq & 2\left\lVert P_{\epsilon}(e^{i\theta} \frac{F}{\sinh r})\right\rVert_{L^2}\left\lVert P_{\epsilon}(e^{i\theta}N)\right\rVert_{L^2},\\
\leq & 2\left\lVert P_{\epsilon}(e^{i\theta} \frac{F}{\sinh r})\right\rVert_{L^2}^2 \left\lVert \psi^{\pm}\right\rVert_{H^2}^2,
\end{align*}
which further gives
\begin{align*}
\left\lVert P_{\epsilon}(e^{i\theta} \frac{F}{\sinh r})\right\rVert_{L^2}^2(t)\leq &\left\lVert P_{\epsilon}(e^{i\theta} \frac{F}{\sinh r})\right\rVert_{L^2}^2(0)+2\int_0^t \left\lVert \frac{F}{\sinh r}\right\rVert_{L^2}^2\left\lVert \psi^{\pm}\right\rVert_{H^2}^2ds,\\
\leq &\left\lVert  \frac{F}{\sinh r}\right\rVert_{L^2}^2(0)+2\int_0^t \left\lVert \frac{F}{\sinh r}\right\rVert_{L^2}^2\left\lVert \psi^{\pm}\right\rVert_{H^2}^2ds.
\end{align*}
Then let $\epsilon\rightarrow 0$, we obtain
\begin{equation}
\left\lVert  \frac{F}{\sinh r}\right\rVert_{L^2}^2(t)\leq \left\lVert  \frac{F}{\sinh r}\right\rVert_{L^2}^2(0)+2\int_0^t \left\lVert \frac{F}{\sinh r}\right\rVert_{L^2}^2\left\lVert \psi^{\pm}\right\rVert_{H^2}^2ds.
\end{equation}
By using Gronwall inequality and $F(0)=0$, we get $F(t)=0$ for all $t\in I$.

In general, if $\psi^{\pm}_0\in L^2$ only, there exists $R_{+}\psi^+_{0,n}\in H^2$ such that $\left\lVert\psi^+_0-\psi^+_{0,n}\right\rVert_{L^2}\leq \frac{1}{n}$. By Lemma \ref{construct psi2 A2 from psi+}, we obtain compatible pair $R_{\pm}\psi^{\pm}_{0,n}\in H^2$ and $\left\lVert\psi^-_0-\psi^-_{0,n}\right\rVert_{L^2}\lesssim \frac{1}{n}$. By the above argument, the solutions $\psi^{\pm}_n$ with initial data $\psi^{\pm}_{0,n}$ satisfy compatibility condition. Then the compatibility condition for $\psi^{\pm}_n$ can be written as
\begin{equation*}
\psi^+_n-\psi^-_n=\int_r^{\infty} \frac{A_2(\psi^+_n+\psi^-_n)}{\sinh s}+ \frac{\cosh s}{\sinh s}(\psi^+_n-\psi^-_n)ds.
\end{equation*}
Hence, by Theorem \ref{Cauchy result} (iv), Lemma \ref{basic inequality} and the expression of $A_2$ (\ref{A2 with psi+ psi-}), we have
\begin{equation*}
\begin{aligned}
&\left\lVert\psi^+-\psi^--\int_r^{\infty} \frac{A_2(\psi^++\psi^-)}{\sinh s}+ \frac{\cosh s}{\sinh s}(\psi^+-\psi^-)ds \right\rVert_{L^2}\\
\leq &\left\lVert(\psi^+-\psi^-)-(\psi^+_n-\psi^-_n)\right\rVert_{L^2}\\
& +\left\lVert\int_r^{\infty} \frac{A_2(\psi^++\psi^-)-A_{2,n}(\psi^+_n+\psi^-_n)}{\sinh s}+ \frac{\cosh s}{\sinh s}[(\psi^+-\psi^-)-(\psi^+_n-\psi^-_n)]ds \right\rVert_{L^2},\\
\lesssim &\left\lVert  \psi^{\pm}-\psi^{\pm}_n \right\rVert_{L^2}+ \left\lVert  A_2-A_{2,n} \right\rVert_{L^{\infty}}\left\lVert  \psi^{\pm} \right\rVert_{L^2}+\left\lVert  A_{2,n} \right\rVert_{L^{\infty}}\left\lVert  \psi^{\pm}-\psi^{\pm}_n \right\rVert_{L^2},\\
\lesssim &\frac{1}{n}+\left\lVert  \int_0^r |\psi^+|^2-|\psi^-|^2-|\psi^+_n|^2+|\psi^-_n|^2 ds\right\rVert_{L^{\infty}}+(1+\left\lVert \psi^{\pm}_n \right\rVert_{L^2}^2)\frac{1}{n},\\
\lesssim & \frac{1}{n},
\end{aligned}
\end{equation*}
which complete the proof of Theorem \ref{thm 4.7}.
\end{proof}

\begin{proof}[Proof of Theorem \ref{main result 2}]
First, we claim: Given $R_{\pm}\psi^{\pm}_0 \in H^2$, $\psi^{\pm}(t)$ is the solution of (\ref{system of psi+ and psi- version}), then the map $u(t)$ constructed in Proposition \ref{construct u proposition} is a Schr\"{o}dinger map. Indeed, by Proposition \ref{construct u proposition}, we construct $u_0\in \mathfrak{H}^3$. Then by Theorem \ref{main result 1}, there exists a unique solution $u(t)\in L^{\infty}(I;\mathfrak{H}^3)$ with data $u_0$. As in Section 4.1, we construct Coulomb gauge and its field component such that they satisfy (\ref{system of psi+ and psi- version}) with initial data $\psi^{\pm}_0$. The uniqueness of the solution of (\ref{system of psi+ and psi- version}) implies $\psi^{\pm}(t)$ are the gauge representation of $\mathcal{V}^{\pm}(t)$. Therefore the map reconstructed in Proposition \ref{construct u proposition} is the Schr\"{o}dinger map $u(t)$.

Next we begin to prove the Theorem \ref{main result 2}. Given initial data $u_0\in \mathfrak{H}^3$, by Theorem 3.2 we obtain a unique local solution on $[0,T]$ for some $T>0$. In particular, if in addition $E(u_0)<\epsilon^2$ for sufficiently small $\epsilon$, we can construct the fields $\psi^{\pm}$ on interval $[0,\ T]$ satisfying (\ref{system of psi+ and psi- version}) and $\left\lVert \psi^+\right\rVert_{L^2}=\left\lVert \psi^-\right\rVert_{L^2}<\epsilon$ as in Section 4.1. By Theorem \ref{Cauchy result} (iii), the solution $\psi^{\pm}$ is defined on $J\subset \R$ for any compact interval $J$ and with $\left\lVert \psi^{\pm}\right\rVert_{L^4_JL^4}\leq C(J,\ \left\lVert \psi^{\pm}_0\right\rVert_{L^2})$. Then by Theorem \ref{Cauchy result} (v) and Proposition \ref{construct u proposition}, we construct a map $u(t)\in \mathfrak{H}^3$ coincide with the Schr\"{o}dinger map on $[0,\ T]$ from $\psi^{\pm}(t)$, moreover, $E(u(T))<\epsilon^2$. Then repeat the procedure the map reconstructed from $\psi^{\pm}(t)$ is in fact a Schr\"{o}dinger map.

For initial data $u_0\in \mathfrak{H}^1$, there exists $u_{0,n}\in\mathfrak{H}^3$ such that $\left\lVert u_0-u_{0,n}\right\rVert_{\mathfrak{H}^1}<\frac{1}{n}$. By (\ref{Lipschitz continuity of psi+-with u}), we obtain the Lipschitz continuity of $\psi^{\pm}_{0,n}$, i.e $\left\lVert \psi^{\pm}_{0,n}-\psi^{\pm}_0\right\rVert_{L^2} \lesssim \left\lVert u_{0,n}-u_0\right\rVert_{\mathfrak{H}^1}$. Then from Theorem \ref{Cauchy result} (iv), the solution of (\ref{system of psi+ and psi- version}) is Lipschitz continuous with respect to initial data, we have $\left\lVert \psi^{\pm}_n-\psi^{\pm}\right\rVert_{L^2}\lesssim \left\lVert\psi^{\pm}_{0,n}-\psi^{\pm}_0\right\rVert_{L^2}\lesssim \left\lVert u_{0,n}-u_0\right\rVert_{\mathfrak{H}^1}$ for any $t\in I$. From Proposition \ref{construct u proposition}, we get $\left\lVert u(t)-u_n(t)\right\rVert_{\mathfrak{H}^1}\lesssim \left\lVert \psi^{\pm}_n-\psi^{\pm}\right\rVert_{L^2}\lesssim \left\lVert u_{0,n}-u_0\right\rVert_{\mathfrak{H}^1}$ for any $t\in I$. Hence, we obtain the desired result.
\end{proof}

\section*{Acknowledgments}
The first author thanks Dr. Ze Li for helpful discussions.

\begin{tabular}{@{}r@{}p{16cm}@{}}
&Jiaxi Huang, {\small{Wu Wen-Tsun Key Laboratory of Mathematics, Chinese Academy of Sciences and Department of Mathematics, University of Science and Technology of China, Hefei 230026, \ Anhui, \ China}}.\\
&E-mail: jiaxih@mail.ustc.edu.cn;\\
&Youde Wang, {\small Academy of Mathematics and Systems Sciences, Chinese Academy of Sciences, Beijing 100080, P.R. China.}\\
&E-mail: wyd@math.ac.cn\\
&Lifeng Zhao, {\small{Wu Wen-Tsun Key Laboratory of Mathematics, Chinese Academy of Sciences
 and Department of Mathematics, University of Science and Technology of China, Hefei 230026, \ Anhui, \ China}}.\\
&E-mail: zhaolf@ustc.edu.cn.
\end{tabular}

\end{document}